\documentclass[12 pt]{amsart}
\usepackage{amssymb, amsmath, amsfonts, amsthm, bbm, graphics}
\usepackage[hmargin=1 in, vmargin = 1 in]{geometry}
\usepackage{tikz-cd}
\usepackage{hyperref}
\usepackage{enumitem}
\usepackage[all]{xy}
\usepackage{stmaryrd}
\usepackage{booktabs}

\usepackage{mathrsfs}
\usepackage[most]{tcolorbox}
\usepackage[most]{tcolorbox}
\usepackage{xcolor}
 \usepackage[T1]{fontenc}
\usepackage{libertinus}
\tcbset{
  myhighlight/.style={
    colback=yellow!20,
    colframe=yellow!50!black,
    boxrule=0.4pt,
    arc=1mm,
    left=1mm,right=1mm,top=1mm,bottom=1mm
  }
}
\usepackage{mathabx,epsfig}

\usepackage{amsthm}

\usepackage{todonotes}
\numberwithin{equation}{section}
\newtheorem{theorem}{Theorem}[section]        
\newtheorem{proposition}[theorem]{Proposition}

\newtheorem{lemma}[theorem]{Lemma}
\newtheorem{corollary}[theorem]{Corollary}
\newtheorem{example}[theorem]{Example}

\theoremstyle{definition}

\newtheorem{definition}[theorem]{Definition}
\newtheorem{remark}[theorem]{Remark}

\newcommand{\restr}[2]{\left.\kern-\nulldelimiterspace #1 \vphantom{\big|}\right|_{#2}}

\newcommand{\Bun}{\mathrm{Bun}}

\DeclareMathOperator{\Aut}{Aut}
\DeclareMathOperator{\Pic}{Pic}

\DeclareMathOperator{\Spec}{Spec}

\newcommand{\im}{\operatorname{im}}

\DeclareMathOperator{\supp}{supp}

\newcommand{\GL}{\mathrm{GL}}
\newcommand{\PGL}{\mathrm{PGL}}
\newcommand{\mat}[4]{\begin{pmatrix} #1 & #2 \\ #3 & #4 \end{pmatrix}}
\newcommand{\smat}[4]{\left(\begin{smallmatrix} #1 & #2 \\ #3 & #4 \end{smallmatrix}\right)}

\usepackage[most]{tcolorbox}
\usepackage{xcolor}

\tcbset{
  myhighlight/.style={
    colback=yellow!20,
    colframe=yellow!50!black,
    boxrule=0.4pt,
    arc=1mm,
    left=1mm,right=1mm,top=1mm,bottom=1mm
  }
}

\renewcommand{\AA}{\mathbb{A}}

\newcommand{\CC}{\mathbb{C}}

\newcommand{\FF}{\mathbb{F}}

\newcommand{\PP}{\mathbb{P}}

\newcommand{\ZZ}{\mathbb{Z}}

\newcommand{\cE}{\mathcal{E}}

\newcommand{\cH}{\mathcal{H}}

\newcommand{\cL}{\mathcal{L}}

\newcommand{\cO}{\mathcal{O}}
\newcommand{\cP}{\mathcal{P}}

\newcommand{\fg}{\mathfrak{g}}

\theoremstyle{definition}

\theoremstyle{definition}

\usepackage{amsmath,amssymb}

\makeatletter
\newcommand{\xRrightarrow}[2][]{\ext@arrow 0359\Rrightarrowfill@{#1}{#2}}
\newcommand{\Rrightarrowfill@}{\arrowfill@\equiv\equiv\Rrightarrow}
\newcommand{\xLleftarrow}[2][]{\ext@arrow 3095\Lleftarrowfill@{#1}{#2}}
\newcommand{\Lleftarrowfill@}{\arrowfill@\Lleftarrow\equiv\equiv}

\makeatother

\begin{document}

\title{Graphs of Hecke operators in mixed ramification}
\author{Rudrendra Kashyap}
\address{Rudrendra Kashyap, University of Pittsburgh, Pittsburgh,
PA, USA}
\email{ruk26@pitt.edu}
\author{Vladyslav Zveryk}
\address{Vladyslav Zveryk, Yale University, New Haven,
CT, USA}
\email{zverik.vladislav@gmail.com}
\maketitle

\begin{abstract}
    We study Hecke operators on moduli spaces of ramified $G$-bundles using the combinatorial language of Hecke graphs. We introduce a general notion of $\cH$-ramification in the spirit of parahoric ramification, which depends on a choice of a divisor and subgroups of $G$ at every point of the divisor. Building on our previous work, we prove that, under mild regularity conditions, the action of a Hecke operator in the deep cusp of $\Bun_G$ in a highly complex ramification mimics an action in a much simpler ramification. This reduces the study to a smaller number of cases which, in particular, involve divisors supported at no more than two points. We demonstrate our methods by computing various examples for $G=\PGL_2$ and computing the dimensions of spaces of Hecke eigenforms for generic eigenvalues. We connect the obtained dimension formulas with the known results from the theory of Eisenstein series.
\end{abstract}

\tableofcontents

\section{Introduction}
Let \(X\) be a smooth projective geometrically connected curve over a finite
field \(k\), and let \(G\) be a connected split reductive group. We study principal
\(G\)-bundles on \(X\), possibly equipped with level structure along an
effective divisor. A Hecke operator is obtained by choosing a closed point
\(x\in |X|\) and a modification type at \(x\); it acts on functions on the set
of isomorphism classes of such bundles with level structure. Equivalently, on
\(k\)-points it is the adjacency operator of a directed graph whose vertices are
ramified \(G\)-bundles and whose edges are Hecke modifications. Ramification
therefore changes the graph by changing the vertices, and, if \(x\) belongs to
the ramification divisor, also by changing the local modification condition. The
purpose is to describe this change in the Harder--Narasimhan cusp.

In \cite{KZ26}, we treated principal ramification, the function-field analogue
of \(\Gamma(n)\)-ramification on modular forms. In this case the level datum is a
trivialization of the bundle over the ramification divisor. Hence an
automorphism preserving the level datum must restrict to the identity on that
finite divisor. The main geometric input was a computation, deep in the
Harder--Narasimhan cusp, of the image on the level divisor of the automorphism
group of the underlying bundle. When the Hecke point is disjoint from the
ramification divisor, this gives a disjoint covering of the unramified cuspidal
graph. When the Hecke point is ramified, the local Hecke correspondence has to
be computed separately. Combining these cusp computations with a propagation
argument for infinite graphs gave bounds, and formulas exact for all but
finitely many eigenvalues, for Hecke eigenspaces for \(G=\mathrm{PGL}_2\).

The present paper replaces principal ramification by general \(\cH\)-ramification.
At each ramified point \(y\), one fixes a subgroup \(H^y\subset G\) and keeps a
reduction of the restricted bundle to \(H^y\). This includes principal,
unipotent, and parahoric-type examples. Such a reduction may be
preserved by nontrivial automorphisms of the restricted bundle. We compute how
these automorphisms affect the fibers of forgetful maps between ramified moduli
spaces. In the cusp, the answer separates a unipotent contribution, which is
uniform, from a torus contribution, which is constrained by the level structure
retained at the other ramified points. This leads to a regularity condition
under which the forgetful map is a covering of Hecke graphs, and to a torus
obstruction to this covering being disjoint. We then use this description to
reduce cusp computations to basic ramification types, compute local ramified
Hecke correspondences for \(\mathrm{GL}_n\), and apply the results to
\(\mathrm{PGL}_2\).

\subsection{Main results}

Let 
$
D=\sum_{y\in X}d_y[y]
$ 
be an effective divisor decomposed into a sum $D=D_1+D_2$ of effective divisors with disjoint support, and assume that the Hecke point \(x\) is not in the support of \(D_1\). We study the map
\[
p_{D,D_2}:\operatorname{Bun}^{\cH}_{G,D}(k)\longrightarrow \operatorname{Bun}^{\cH}_{G,D_2}(k),
\]
which forgets the level structure over \(D_1\).

Our first result computes the fibers of this map deep in the Harder--Narasimhan cusp. Let \(P_G\) be a \(G\)-bundle whose Harder--Narasimhan reduction is a \(B\)-reduction, and assume that \(P_G\) lies sufficiently deep in the \(D\)-cusp. Fix an \(\cH\)-level structure over \(D_2\). After choosing a trivialization of $\cP_G$ over $D_2$ sending $\cP_B$ to $B$, we can represent this level structure by an element \(\tau\in G(\mathcal O_{D_2})\), where $\cO_{D_2}$ is the coordinate ring of $D_2$. We define
\begin{align*}
T_{D_2}(\tau)
&:=
T(k)\cap
\bigcap_{y\in \operatorname{supp}(D_2)}
\tau_y H^y(\mathcal O_y/\pi_y^{d_y})\tau_y^{-1}
U(\mathcal O_y/\pi_y^{d_y}),\\
\cH(\cO_D)&:=\prod_{y\in\supp D}H^y(\cO_{y}/\pi_{y}^{d_y}).
\end{align*}

Then we get the following result (Theorem \ref{t:automorphisms_reductive_general} and Corollary \ref{cor:fiber_size_reductive}).
\begin{theorem}
The image of the restriction map
\[
\operatorname{Aut}(P_G,\varphi_{D_2})
\longrightarrow G(\mathcal O_{D_1})
\]
under the chosen trivialization is
\[
T_{D_2}(\tau)\ltimes U(\mathcal O_{D_1})
\]
when \(D_2\neq 0\), and is \(T(k)\ltimes U(\mathcal O_{D_1})\) when \(D_2=0\). Therefore
\[
p_{D,D_2}^{-1}(P_G,\varphi_{D_2})
\simeq
\cH(\mathcal O_{D_1})\backslash G(\mathcal O_{D_1})/
\bigl(T_{D_2}(\tau)U(\mathcal O_{D_1})\bigr).
\]
\end{theorem}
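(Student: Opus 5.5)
The plan has two stages: compute the image of automorphisms deep in the cusp, then impose the level condition at $D_2$.

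\emph{Step 1 (unramified image).} Since the Harder--Narasimhan reduction $\cP_B$ is canonical, every automorphism of $P_G$ preserves it. Hence $\Aut(P_G)=\Aut(\cP_B)$, which is an extension of $\Aut(\cP_T)=T(k)$ by the unipotent group $H^0(X,U_{\cP_B})$, since $X$ is projective and $\cP_T$ is a torus bundle. The group $U_{\cP_B}$ carries a filtration indexed by positive roots, with graded pieces the line bundles $\cL_\alpha=\alpha(\cP_T)$. Deep in the $D$-cusp every $\deg\cL_\alpha$ exceeds $2g-1+\deg D$, so $H^1(\cL_\alpha(-D))=0$ and $H^0(\cL_\alpha)\to\cL_\alpha|_D$ is surjective. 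I will use this input from \cite{KZ26}, re-derived root by root via the filtration. Induction on the filtration then shows that $H^0(U_{\cP_B})\to U(\cO_D)$ is surjective in the chosen trivialization. Consequently the image of $\Aut(P_G)$ in $G(\cO_D)=G(\cO_{D_1})\times G(\cO_{D_2})$ is $T(k)\ltimes U(\cO_D)$, with $T(k)$ embedded diagonally as constants. For $D_2=0$ this already gives the claim.

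\emph{Step 2 (imposing $\varphi_{D_2}$).} An automorphism $a$ preserves $\varphi_{D_2}$ exactly when its restriction $a_{D_2}=(tu_y)_y$ lies in $\prod_y\tau_yH^y(\cO_y/\pi_y^{d_y})\tau_y^{-1}$. Surjectivity onto $U(\cO_D)=U(\cO_{D_1})\times U(\cO_{D_2})$ lets us choose the $D_1$-component $u_1$ and the $D_2$-component $u_2$ independently. First, the image in $G(\cO_{D_1})$ contains $t\,u_1$ for every $u_1\in U(\cO_{D_1})$ and every $t\in T(k)$ for which some $u_2$ makes $tu_2$ stabilize the level. That condition on $t$ says $t\in\tau_yH^y\tau_y^{-1}U$ at each $y$, which is exactly $T_{D_2}(\tau)$. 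Conversely, the $T$-part of any stabilizing automorphism lies in $T_{D_2}(\tau)$. So the image is $T_{D_2}(\tau)\ltimes U(\cO_{D_1})$. This is a group because $T(k)$ normalizes $U(\cO_{D_1})$, and $T_{D_2}(\tau)$ is a subgroup since it is the projection to $T$ of a group.

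\emph{Step 3 (fibers).} The fiber of $p_{D,D_2}$ consists of $\cH$-structures over $D_1$ modulo $\Aut(P_G,\varphi_{D_2})$. In the trivialization these structures form $G(\cO_{D_1})/\cH(\cO_{D_1})$, or the reversed quotient depending on convention. Passing to inverses yields the stated double coset $\cH(\cO_{D_1})\backslash G(\cO_{D_1})/T_{D_2}(\tau)U(\cO_{D_1})$, because the automorphisms act only through their image computed above.

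The main obstacle is Step 2's independence claim. One must make sure that surjectivity onto $U(\cO_D)$ is genuinely joint over $D_1\cup D_2$, which requires the cusp bound involving the full $\deg D$ and not merely $\deg D_1$. One must also check that the torus part is constant, so the same $t$ appears at $D_1$ and at every point of $D_2$. Both facts are supplied by the depth assumption and properness of $X$.
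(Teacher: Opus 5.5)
Your proposal is correct and is essentially the paper's proof. The paper likewise starts from the image $T(k)\ltimes U(\cO_D)$ of $\Aut(\cP_G)$ in $G(\cO_D)$: it simply cites \cite[Theorem 5.8(ii)]{KZ26}, whereas you sketch that argument via the root filtration, for which the $D$-cusp bound $\deg\cL_\alpha>2g-2+\deg D$ already suffices. It then imposes the condition $t\,u_{2,y}\in\tau_yH^y\tau_y^{-1}$ at each $y\in\supp D_2$, using, as you do, that the $D_1$- and $D_2$-components of $U(\cO_D)$ can be chosen independently, and obtains the fiber statement as in Corollary~\ref{cor:fiber_size_reductive} by quotienting the $\cH$-structures over $D_1$ by this image.
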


This formula is the main geometric input of the paper.
In particular, it implies that the fiber size is not always constant. To handle this, we introduce a regularity condition. We say that the ramification datum is regular over \(D_2\) if \(T_{D_2}(\tau)\) does not depend on \(\tau\). This condition holds in particular if one of the retained subgroups \(H^y\) is unipotent; in that case the corresponding torus group is trivial.

Under this regularity condition, the forgetful map becomes a covering map of Hecke graphs in the cusp. More precisely, our main result is the following (Theorem \ref{t:main_theorem}).
\begin{theorem}
Deep in the cusp, the map of graphs
\[
p_{D,D_2}:\Gamma_{D,\cH}^{\mathrm{cusp}}\longrightarrow \Gamma_{D_2,\cH}^{\mathrm{cusp}}
\]
forgetting the level structure at $D_1$ is a covering map of directed graphs. Moreover, if ramification is regular at $D_2':=D_2-d_x[x]$, then the monodromy of this covering is contained in $T_{D_2'}(\tau)/T_{D_2}(\tau)$. 

In particular, if $T_{D_2'}(\tau)=T_{D_2}(\tau)$, then the covering is disjoint. This assumption is satisfied if there exists a point $y\in\supp D_2$, $y\ne x$ such that $H^y$ is unipotent.
\end{theorem}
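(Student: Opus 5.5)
We prove that $p_{D,D_2}$ is a covering by comparing, edge by edge, the Hecke modifications at $x$ of an $\cH$-bundle over $D$ with those of its image over $D_2$. Since $x\notin\supp D_1$, a Hecke modification $P_G\dashrightarrow P_G'$ at $x$ is an isomorphism over $X\setminus\{x\}\supset \supp D_1$. So it transports a level structure $\varphi_{D_1}$ on $P_G$ canonically to one on $P_G'$; the level structure over $D_2$ is transported by the local ramified Hecke condition at $x$, if $x\in\supp D_2$. Hence every edge $e:(P_G,\varphi_{D_2})\to(P_G',\varphi'_{D_2})$ of $\Gamma^{\mathrm{cusp}}_{D_2,\cH}$ lifts uniquely once a lift of its source is chosen.

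To show this gives a bijection on outgoing edges (and, symmetrically, incoming edges), we count with multiplicities. The fiber over $(P_G,\varphi_{D_2})$ is
\[
p_{D,D_2}^{-1}(P_G,\varphi_{D_2})\simeq \cH(\cO_{D_1})\backslash G(\cO_{D_1})/\bigl(T_{D_2}(\tau)U(\cO_{D_1})\bigr)
\]
by Theorem \ref{t:automorphisms_reductive_general} and Corollary \ref{cor:fiber_size_reductive}. Edge multiplicities are orbit counts of $\Aut(P_G,\varphi_{D_2})$ acting on the set of modifications. We must check that the stabilizer data are compatible: the automorphisms of a lift $(P_G,\varphi_D)$ are exactly the automorphisms of $(P_G,\varphi_{D_2})$ whose image in $G(\cO_{D_1})$ preserves the $\cH$-structure. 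Deep in the cusp, a modification at $x$ of a bundle sufficiently deep stays deep, with Harder--Narasimhan reduction compatible with $P_B$. Hence the restriction images at both ends of the edge are described by the same formula, $T_{D_2}(\tau)\ltimes U(\cO_{D_1})$ versus $T_{D_2}(\tau')\ltimes U(\cO_{D_1})$. Transport along the edge identifies $G(\cO_{D_1})$ at the source and target compatibly with the $B$-trivializations, possibly up to conjugation by $T$. The unipotent parts therefore match, and the multiplicities of lifted edges agree, which is the covering property.

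For the monodromy, we follow a loop in $\Gamma^{\mathrm{cusp}}_{D_2,\cH}$ and compute the composite transport on the fiber. The transport is induced by the identification of restrictions over $D_1$, which is well defined only up to automorphisms of the intermediate bundles preserving the level data at points of $D_2$ other than $x$. This is because the level structure at $x$ is changed by the modification, whereas those at $D_2'$ are carried along. These automorphisms restrict over $D_1$ to $T_{D_2'}(\tau)\ltimes U(\cO_{D_1})$. Regularity at $D_2'$ makes this group independent of the vertex along the loop. Since $U(\cO_{D_1})$ is already quotiented out in the fiber, the monodromy acts through $T_{D_2'}(\tau)/T_{D_2}(\tau)$, acting on the double coset space by right multiplication.

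If the two groups coincide, the monodromy is trivial and the covering is disjoint. When some $y\in\supp D_2$ with $y\neq x$ has $H^y$ unipotent, the $y$-factor in the intersection defining $T_{D_2'}(\tau)$ forces triviality, so both groups are trivial. The main obstacle is the second step: we must make precise that transport along a Hecke edge respects the chosen $B$-trivializations over $D_1$. Concretely, the HN reductions of $P_G$ and $P_G'$ must agree away from $x$ deep in the cusp. We would try to deduce this from uniqueness of the HN reduction together with the depth bound used in Theorem \ref{t:automorphisms_reductive_general}.
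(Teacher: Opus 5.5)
\textbf{Verdict: there is a genuine gap.} The covering property for incoming edges is asserted rather than proved, and the regularity hypothesis it needs is never used.

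Your outgoing-edge lifting is fine. It is what the paper obtains from Proposition \ref{p:change_of_ramification}, since $x\notin\supp D_1$. The incoming-edge property is where the real content lies, and it is not ``symmetric'' to the outgoing one. You never use that $\cH$ is regular over $D_2$, which is the standing hypothesis of the covering statement (see Theorem \ref{t:covering_map_GL}). Your inference ``the unipotent parts match, hence the multiplicities of lifted edges agree'' ignores exactly the torus parts $T_{D_2}(\tau)$ versus $T_{D_2}(\tau')$. When these differ, the fibers over the two ends of an edge have different sizes and the map is genuinely not a covering. The paper's example with $H^x=T$, $H^y=U$ on $\PP^1$ exhibits this failure. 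Also, outgoing multiplicities are not orbit counts of $\Aut(\cP_G,\phi_{D_2})$; they count cosets in $K_x\Delta_xK_x/K_x$ whose target lies in a given isomorphism class. So the HN-compatibility you single out as the main obstacle is not the crux; the torus obstruction is.

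\textbf{The missing argument.} The paper first shows that the number of edges from $p^{-1}(w)$ into a fixed lift $v$ is at most the number of edges $w\to p(v)$. Suppose two non-isomorphic lifts of $w$ reached $v$ through the same modification $(f,\tau)$. Then the isomorphism $\alpha$ between the two lifted targets, conjugated by $f$, restricts over $D_1$ into $T_{D_2}(\tau')U(\cO_{D_1})$. By regularity this equals $T_{D_2}(\tau)U(\cO_{D_1})$. By Theorem \ref{t:automorphisms_reductive_general}(ii) it therefore lifts to an automorphism of $(\cP_G,\phi_{D_2})$ identifying the two source lifts, a contradiction. The paper then double counts all edges from $p^{-1}(w)$ to $p^{-1}(p(v))$. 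This uses the outgoing bijection together with $|p^{-1}(w)|=|p^{-1}(p(v))|$, which again needs regularity via Corollary \ref{cor:fiber_size_reductive}, and it upgrades the inequality to equality. Your stabilizer comparison could be made into an alternative proof through a mass formula relating in- and out-multiplicities by ratios of $|\Aut|$. But that also requires imposing regularity over $D_2$, so that the images of the automorphism groups at the two ends agree including their torus parts.

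\textbf{Monodromy.} Your sketch is close to the paper's, but the composite transport $f$ along the loop is only an automorphism over $X\setminus x$, not a global one. Two facts make the argument work:
\begin{enumerate}
\item The torus part of $f$ is constant, because $f$ preserves $\cP_B$ and $\cO(X\setminus x)^\times=k^\times$.
\item With the closing automorphism $a$ of $\cP_G$, one gets $t=a_Tf_T^{-1}\in T_{D_2'}(\tau)$, because $af^{-1}$ preserves the $D_2'$-level data.
\end{enumerate}
Then $(f|_{D_1}t,\,a|_{D_2})$ lifts to a global automorphism carrying $v$ to $t.\gamma(v)$.
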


 This result allows us to significantly reduce the complexity of ramification. Another result makes it possible to change the ramification subgroups $H^y$ (Proposition \ref{p:change_of_ramification}). One particular case of it is the following.
 \begin{proposition}
     Choose two ramification data $(D,\cH_1)$ and $(D,\cH_2)$ with $H^y_1=1$ for all $y\ne x$ and $H_1^x=H_2^x$. The second ramification includes less structure, hence the vertices of the Hecke graph for $(D,\cH_2)$ are obtained from identifying certain vertices of the Hecke graph for $(D,\cH_1)$. This correspondence on vertices induces a bijection of edges.
 \end{proposition}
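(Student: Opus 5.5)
Vertices of the Hecke graph are pairs $(P_G,\varphi)$, with $\varphi=(\varphi_y)_{y\in\supp D}$ a family of $H^y$-reductions of $P_G|_{D_y}$, taken up to isomorphism. The plan is to compare the two graphs fiberwise over the unramified data. Since $H_1^y=1$ for $y\neq x$, a $\cH_1$-structure at such $y$ is a full trivialization of $P_G|_{D_y}$, and a $\cH_2$-structure is its image modulo $H_2^y$. At $x$ the two structures coincide.

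\textbf{Vertices.} Put $D':=D-d_x[x]$. Forgetting structure gives a map $q:\Gamma_{D,\cH_1}\to\Gamma_{D,\cH_2}$, sending the class of $(P_G,\varphi_x,(t_y)_{y\ne x})$ to the class of $(P_G,\varphi_x,(t_yH_2^y)_{y\ne x})$. Both are vertices over the same $(P_G,\varphi_x)$. Fix $(P_G,\varphi_x)$, write $A:=\Aut(P_G,\varphi_x)$, and choose a trivialization over $D'$. The vertices of $\Gamma_{D,\cH_1}$ over this point are then $G(\cO_{D'})/A$, and those of $\Gamma_{D,\cH_2}$ are $\cH_2(\cO_{D'})\backslash G(\cO_{D'})/A$, where $A$ acts through restriction to $D'$. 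So $q$ is the quotient by the left action of $\cH_2(\cO_{D'})$; this is the identification of vertices asserted in the statement.

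\textbf{Edges.} An edge out of $(P_G,\varphi)$ is a Hecke modification at $x$, $P_G\dashrightarrow P_G'$, of the prescribed type that is compatible with $\varphi_x$ in the sense of the local ramified Hecke correspondence. The modification is an isomorphism away from $x$, so it canonically identifies $P_G|_{D'}$ with $P'_G|_{D'}$. The level structures at $y\ne x$ are therefore simply transported, and the local condition at $x$ involves only $\varphi_x$. Hence, for a fixed vertex $v$ of $\Gamma_{D,\cH_1}$, the edges out of $v$ and out of $q(v)$ are both in bijection with the same set $\mathcal M(P_G,\varphi_x)$ of $x$-modifications of $(P_G,\varphi_x)$, counted with multiplicity as the paper counts edges. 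The same holds for incoming edges.

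\textbf{Main obstacle.} I expect the hardest step to be showing that $q$ is a bijection on edges globally and not just locally. Several vertices $v$ collapse to one $q(v)$, so one must show that the edges out of $q(v)$ do not merge, and that the edges from distinct lifts are not double counted. My first approach is to use multiplicities. For a vertex $w$ of $\Gamma_{D,\cH_2}$, the edges out of $w$ are counted in $\mathcal M$ modulo $\Aut(w)$, while the lifts $v$ of $w$ form the $\cH_2(\cO_{D'})$-orbit, each lift carrying $\mathcal M$ modulo $\Aut(v)$. The key identity is that $\Aut(w)$ acts on the fiber $q^{-1}(w)$ with stabilizer $\Aut(v)$, because automorphisms preserving the full trivialization $t_y$ are exactly those fixing $v$. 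An orbit–stabilizer count then shows that the edges over $w$ are exactly the disjoint union of the $\mathcal M/\Aut(v)$ over the distinct lifts. I would check this by working with groupoid cardinalities, which amounts to comparing the groupoids of edges with their $\cH_2(\cO_{D'})$-quotients. The fact that the equality $H_1^x=H_2^x$ leaves the structure at $x$ untouched is what makes the action commute with the modification.
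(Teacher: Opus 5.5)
Your \textbf{Edges} paragraph is correct and is the paper's argument in geometric form. Because $H_1^x=H_2^x$, we have $K^1_x=K^2_x$. So the same representatives $\tau_i$ of $K_x\Delta_xK_x/K_x$ index the outgoing edges of any lift $\tilde a$ and of $a=q(\tilde a)$. The $i$-th targets correspond under $q$, because the structure on $D'$ is only transported (Proposition~\ref{p:change_of_ramification}). That already proves the statement. Its content is local at each lift: edges $a\to b$ correspond bijectively to edges from $\tilde a$ into $q^{-1}(b)$. What you add afterwards is wrong, and as written your plan sets out to prove a false statement.

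Write $m(w,u)$ for the number of edges $w\to u$. Every vertex of both graphs has out-degree $|K_x\Delta_xK_x/K_x|$. So your outgoing bijection gives exactly $|q^{-1}(w)|\,m(w,u)$ edges from $q^{-1}(w)$ into $q^{-1}(u)$.

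(a) The sentence ``the same holds for incoming edges'' is false. If incoming edges also lifted bijectively, that count would equal $|q^{-1}(u)|\,m(w,u)$, forcing equal fiber sizes, and fibers do differ. Take $X=\PP^1$, $G=\PGL_2$, $\deg x=1$, $x\notin\supp D$, $D=[y]$, $H_1^y=1$, $H_2^y=G$. The fiber over $\cO\oplus\cO$ is a single vertex, while the fiber over $\cO(1)\oplus\cO$ is $\PP^1(k)$. That single vertex receives $q(q+1)$ edges from the lifts of $\cO(1)\oplus\cO$, against $q$ edges downstairs. Controlling incoming edges is exactly what requires the cusp and regularity in Theorem~\ref{t:covering_map_GL}.

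(b) Your ``main obstacle'' is false as soon as $|q^{-1}(w)|>1$. It asks for a global bijection between edges out of $q^{-1}(w)$ and edges out of $w$. But each lift carries a full copy of the outgoing edges of $w$, so edges from distinct lifts necessarily map to the same edges.

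(c) The count you propose for it is also wrong. The outgoing edges of a vertex are all the cosets in $K_x\Delta_xK_x/K_x$, not their orbits under automorphisms. For example, the trivial bundle on $\PP^1$ has $q+1$ outgoing edges forming a single $\GL_2(k)$-orbit. This also contradicts your own Edges paragraph. Even in groupoid form the identity fails. Apply orbit--stabilizer to $\Aut(w)$ acting on the $|\cH_2(\cO_{D'})|$ trivializations over $D'$ refining $w$. This gives $\sum_{v\in q^{-1}(w)}1/|\Aut(v)|=|\cH_2(\cO_{D'})|/|\Aut(w)|$, which is off from your desired identity by the factor $|\cH_2(\cO_{D'})|$.

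Delete the incoming-edge sentence and the last paragraph; the proof is complete after the Edges paragraph.
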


As a computational consequence of these results, to know all cusp graphs for arbitrary ramification it is enough
to compute three basic families (Section~\ref{ss:summary_computations}). Fix the Hecke point \(x\). The required families are
\begin{enumerate}
    \item $D=0$,
    \item $\operatorname{supp}D=\{x\},\ H^x=1$,
    \item $\operatorname{supp}D=\{x,y\},\ H^y=1,\ \text{with arbitrary }H^x$.
\end{enumerate}

We also describe the local Hecke correspondences for \(G=\operatorname{GL}_n\) explicitly when the Hecke point itself is ramified. Let \(\omega_r\) be the \(r\)-th fundamental coweight, and let \(P\) be the standard parabolic of block type \((r,n-r)\), with unipotent radical \(U\). We prove the following (Theorem \ref{t:hecke_gln_cases} and Remark \ref{r:strengthening_of_general_connections}).
\begin{theorem}\
    \begin{enumerate}[label=(\roman*)]
        \item If $U\subset H^x\subset P$, then the correspondence gives \(q^{r(n-r)\deg(x)}\) distinct subbundles, and each of them has a unique compatible \(H^x\)-level structure.
\item If $U^{-}\subset H^x\subset P^{-}$ or \(H^x\) is contained in the Levi subgroup of $P$, then the underlying subbundle is unique, but the edges are still parametrized by an explicit affine space of size \(q^{r(n-r)\deg(x)}\). 
    \end{enumerate}
\end{theorem}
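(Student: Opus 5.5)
We would work in the lattice model of Hecke modifications for $\GL_n$. A point of $\Bun_{\GL_n}$ with level structure at $x$ is a rank $n$ bundle $\cE$ together with an $H^x$-reduction of $\cE|_{d_x x}$. The Hecke operator of type $\omega_r$ at $x$ sends $\cE$ to the subsheaves $\cE'\subset\cE$ with $\cE/\cE'$ a length-$r$ skyscraper killed by $\pi_x$, or dually to the corresponding supersheaves. Locally everything reduces to $\cO_x$-lattices. After trivializing $\cE$ near $x$ by $\tau\in G(\cO_x/\pi_x^{d_x})$ compatibly with the level structure, the Hecke edges out of $(\cE,\varphi)$ correspond to pairs $(L',\varphi')$ with $\pi_x L\subset L'\subset L$ of codimension $r$. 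Here $\varphi'$ is an $H^x$-structure on $L'/\pi_x^{d_x}L'$, and it is required to be compatible with $\varphi$ through the natural maps $L'\to L$ and $\pi_x L\to L'$.

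We take the compatibility condition to be the one used in the definition of the ramified Hecke correspondence, namely that the relative position of the two $H^x$-structures is the double coset $H^x\,t_x^{\omega_r}\,H^x$, where $t_x^{\omega_r}=\mathrm{diag}(\pi_x,\dots,\pi_x,1,\dots,1)$. The edge count is then the size of
\[
H^x(\cO_x/\pi_x^{d_x})\backslash H^x t^{\omega_r} H^x/H^x,
\]
computed inside $G(F_x)/G(\cO_x)$-type quotients truncated at level $d_x$. We will count this through the stabilizer, as
\[
\bigl|H^x(\cO_x/\pi_x^{d_x})\bigr| \,/\, \bigl|H^x\cap t^{\omega_r}H^x t^{-\omega_r}\bigr|.
\]
The key observation is that conjugation by $t^{\omega_r}$ scales the $(r,n-r)$ block by $\pi_x$ and the $(n-r,r)$ block by $\pi_x^{-1}$, and acts trivially on the Levi. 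The index we need is therefore governed by the intersection of $H^x$ with these off-diagonal blocks.

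For case (i), suppose $U\subset H^x\subset P$. The unipotent radical $U\cong \mathrm{Mat}_{r\times(n-r)}$ appears in $H^x$ in full at level $d_x$. Its conjugate by $t^{\omega_r}$ loses exactly the top layer $\mathrm{Mat}_{r\times(n-r)}(\cO_x/\pi_x)$, because $H^x\subset P$ has no $U^-$ part that could compensate. This gives an index of $q^{r(n-r)\deg x}$.

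To see that the resulting $H^x$-orbits correspond to distinct lattices, note that the $U(\cO_x)$-translates of the standard $L'=\pi_x\cO^r\oplus\cO^{n-r}$ are exactly the codimension-$r$ subspaces of $L/\pi_xL$ that are transversal to a fixed flag piece. This is the big cell of the Grassmannian, which has $q^{r(n-r)\deg x}$ points, so the subbundles are distinct. Uniqueness of the compatible level structure on each $L'$ then follows because the stabilizer of $L'$ in $H^x$ already acts transitively on the compatible choices, by the same index computation with the Levi factor.

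For case (ii), suppose $U^-\subset H^x\subset P^-$. Then $H^x$ preserves the lattice $L'=t^{\omega_r}L$ itself, since conjugating $U^-$ by $t^{-\omega_r}$ keeps it integral. So the underlying subbundle is unique. The edges are instead parametrized by the compatible level structures, namely $H^x/(H^x\cap t^{\omega_r}H^xt^{-\omega_r})$, and this is the quotient of the top layer of $U^-$, an affine space $\mathrm{Mat}_{(n-r)\times r}(k_x)$ of size $q^{r(n-r)\deg x}$. The case $H^x\subset$ Levi is handled the same way, except that the affine space comes from the freedom in lifting the level structure across the quotient $L'/\pi_x^{d_x}L'\to L/\pi_x^{d_x}L$, which is not an isomorphism.

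The main obstacle will be pinning down the compatibility condition precisely as defined in the paper's ramified Hecke correspondence, together with the bookkeeping of truncation levels. The trouble is that $L'/\pi_x^{d_x}L'$ and $L/\pi_x^{d_x}L$ are not identified, so the map $\varphi\mapsto\varphi'$ is only defined modulo $\pi_x^{d_x-1}$. Case (i) should follow directly from the Bruhat-cell count. Case (ii) needs care, since the size-$q^{r(n-r)\deg x}$ affine space may instead arise from this truncation ambiguity, and I would verify it first for $n=2$, $r=1$ before handling general $n$.
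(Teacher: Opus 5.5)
\textbf{Overall.} Your framework matches the paper's: edges are counted as $K_x/(K_x\cap\Delta_xK_x\Delta_x^{-1})$, and this is compared with the lattice stabilizer $K_x\cap\Delta_x\GL_n(\cO_x)\Delta_x^{-1}$. (Note that $H^x\backslash H^xt^{\omega_r}H^x/H^x$ is a single point. The set you actually want is $K_x\Delta_xK_x/K_x$, which is what your stabilizer formula computes.)

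\textbf{Case (i).} This is essentially right. The translates $\smat{1}{C}{0}{1}\Delta_x\cO_x^n$ are $q^{r(n-r)\deg x}$ distinct lattices (the big cell). Since the index is also $q^{r(n-r)\deg x}$, counting shows that each lattice carries exactly one edge. What uniqueness really means is $K_x\cap\Delta_x\GL_n(\cO_x)\Delta_x^{-1}=K_x\cap\Delta_xK_x\Delta_x^{-1}$, i.e. the stabilizer of $L'$ fixes the level structure. This equality uses both $X_{21}\in\pi_x^{d_x}M_{(n-r)\times r}(\cO_x)$ and $U\subset H^x$. Saying the stabilizer ``acts transitively'' on compatible choices is not the needed statement.

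\textbf{Case (ii): the genuine gap.} You attribute the $q^{r(n-r)\deg x}$ edges to the top layer of $U^-$, but your own scaling rule rules this out. An element $h\in K_x$ lies in $\Delta_xK_x\Delta_x^{-1}$ iff $\Delta_x^{-1}h\Delta_x$ lies in $K_x$, and the blocks of $\Delta_x^{-1}h\Delta_x$ are $X_{11}$, $\pi_x^{-1}X_{12}$, $\pi_xX_{21}$, $X_{22}$. The lower-left block only becomes more divisible, so $U^-(\cO_x)\subset K_x\cap\Delta_xK_x\Delta_x^{-1}$ and $U^-$ contributes nothing to the index.

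The index comes from the upper-right block instead. Since $H^x\subset P^-$ and $d_x\ge1$, every element of $K_x$ has $X_{12}\in\pi_x^{d_x}M_{r\times(n-r)}(\cO_x)$. This, and not integrality of conjugated $U^-$, is why $K_x$ fixes $\Delta_x\cO_x^n$. Membership in the intersection requires $X_{12}\in\pi_x^{d_x+1}M_{r\times(n-r)}(\cO_x)$. The resulting coset representatives are $\smat{1}{\pi_x^{d_x}C}{0}{1}$ with $C\in M_{r\times(n-r)}(k_x)$, as in the paper.

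\textbf{Consequence for your truncation plan.} These representatives lie in the kernel of reduction modulo $\pi_x^{d_x}$. So computing the index with the finite groups $H^x(\cO_x/\pi_x^{d_x})$ ``truncated at level $d_x$'' would give index $1$. You must work with the full preimage $K_x$, or at least at level $d_x+1$. This is exactly the truncation ambiguity you suspected in your last paragraph; that suspicion was the correct one.

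\textbf{The Levi case.} The same computation handles $H^x$ contained in the Levi. There the lower-left block is made harmless by the property in Remark~\ref{r:strengthening_of_general_connections}: $\smat{X_{11}}{0}{X_{21}}{X_{22}}\in H^x(\cO_{d_x[x]})$ implies $\smat{X_{11}}{0}{\pi_xX_{21}}{X_{22}}\in H^x(\cO_{d_x[x]})$. Your description via ``freedom in lifting across $L'/\pi_x^{d_x}L'\to L/\pi_x^{d_x}L$'' does not replace this explicit computation.
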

Thus the same local number appears, but it is seen in two different ways: either as many subbundles with one level structure each, or as one subbundle with many compatible edges. We also describe how to get a graph for $\PGL_n$ from the graph for $\GL_n$: under reasonable assumptions on $\cH$ (actually, only $H^x$), it is obtained by gluing vertices in the graph for $\GL_n$ and keeping all the edges. See Proposition \ref{p:from_GL_to_PGL}.

Finally, we apply the geometric results to eigenspaces for \(G=\operatorname{PGL}_2\). Let \(\Gamma\) be the Hecke graph, and let \(\dim_\lambda \Gamma\) be the dimension of the \(\lambda\)-eigenspace of its adjacency operator. 

\begin{theorem}
Write \(r=\deg(x)\). Let $\lambda \neq 0$. Then there exists a finite subgraph \(\Gamma'\) of $\Gamma$ such that:
    \begin{enumerate}[label=(\roman*)]
        \item If \(x\notin \operatorname{supp}(D)\), then
\[
r|\operatorname{Pic}^0(X)(k)|
\cdot
\left|
\cH(\mathcal O_D)\backslash G(\mathcal O_D)/
(T(k)\ltimes U(\mathcal O_D))
\right|
\leq
\dim_\lambda \Gamma
\]
and
\[
\dim_\lambda \Gamma
\leq
\dim_\lambda \Gamma'
+
r|\operatorname{Pic}^0(X)(k)|
\cdot
\left|
\cH(\mathcal O_D)\backslash G(\mathcal O_D)/
T(k)U(\mathcal O_D)
\right|.
\]
\item If $D=d[x]$ and $H^x\in \{1,U\}$, then
\[
r|\operatorname{Pic}^0(X)(k)|
\frac{(q^r-1)q^{r(d-1)}}{q-1}
\leq
\dim_\lambda \Gamma
\leq
\dim_\lambda \Gamma'
+
r|\operatorname{Pic}^0(X)(k)|
\frac{(q^r-1)q^{r(d-1)}}{q-1}.
\]
For general $D=D'+d[x]$, the same number appearing both in the lower and upper bounds must be multiplied by $|\cH(\cO_{D'})\backslash G(\cO_{D'})/U(\cO_{D'})|$
\item If $D=d[x]$ and $H^x=B$, then
\[
r|\operatorname{Pic}^0(X)(k)|
\leq
\dim_\lambda \Gamma
\leq
\dim_\lambda \Gamma'
+
r|\operatorname{Pic}^0(X)(k)|.
\]
\end{enumerate}
These bounds are equalities for all but finitely many \(\lambda\).
\end{theorem}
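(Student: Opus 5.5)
The plan is to combine the cusp covering result (the main theorem stated above, together with Proposition \ref{p:change_of_ramification} and the $\GL_n$-to-$\PGL_n$ gluing) with the propagation argument of \cite{KZ26} for infinite graphs. We split $\Gamma$ into a finite part $\Gamma'$, consisting of vertices whose Harder--Narasimhan slope is bounded by an explicit constant depending on $D$, $x$ and the genus, and the cusp $\Gamma^{\mathrm{cusp}}$, where the covering description holds. For a nonzero $\lambda$, an eigenfunction is determined on the cusp by a linear recursion along the ``rays'' (the chains of degrees $\deg \cL_1-\deg\cL_2$ increasing by $r$) once initial data on a boundary layer is fixed. The dimension of the eigenspace is therefore bounded above by $\dim_\lambda\Gamma'$ plus the number of free parameters on the cusp. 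It is bounded below by the number of independent solutions on the cusp that can be extended inward, which is where $\lambda\neq 0$ is used: the adjacency operator restricted to the boundary is invertible for such $\lambda$, so the extension exists.

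\textbf{Counting cusp components.} For the unramified $\PGL_2$ graph with $D=0$, the cusp decomposes into $r|\Pic^0(X)(k)|$ rays. We get this by classifying split bundles $\cL_1\oplus\cL_2$ up to twisting and grouping the degree differences modulo $r$; each ray contributes exactly one free parameter for the eigen-recursion, since the cusp edges go one step down with multiplicity $q^r$ and one step up with multiplicity $1$. In case (i), $x\notin\supp D$, so we apply the main theorem with $D_2=0$. The map $\Gamma_{D,\cH}^{\mathrm{cusp}}\to\Gamma_{0}^{\mathrm{cusp}}$ is a covering whose fibers are the double cosets $\cH(\cO_D)\backslash G(\cO_D)/(T(k)\ltimes U(\cO_D))$. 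The covering need not be disjoint, because of the torus monodromy in $T(k)/T_{D_2}$. We handle this by bounding in both directions:
\begin{itemize}
\item the lower bound comes from the orbits of the fiber under the full $T(k)U$ quotient, which appears in the displayed inequality, with the $\ltimes$ and product versions agreeing in the counting sense;
\item the upper bound comes from the total number of fiber points, since each lifts a ray to at most one sheet.
\end{itemize}

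In cases (ii) and (iii), where $x\in\supp D$, we use the reduction to the basic families of Section~\ref{ss:summary_computations}. We take $D_2=d[x]$ and $D_1=D'$. Since $H^x\in\{1,U\}$ is unipotent, the regularity hypothesis and the disjointness criterion of the main theorem apply, which yields the multiplicative factor $|\cH(\cO_{D'})\backslash G(\cO_{D'})/U(\cO_{D'})|$. For $D=d[x]$, we compute the cusp graph directly using the $\GL_2$ local correspondence (the $\GL_n$ theorem above with $n=2$, $r=1$), then pass to $\PGL_2$. For $H^x=1$ or $U$, the fibers over each unramified vertex have size $|B(\cO_x/\pi^d)\backslash\ldots|$, which yields the count $\frac{(q^r-1)q^{r(d-1)}}{q-1}$ of rays: this is the number of points of $\PP^1(\cO_x/\pi_x^d)$ modulo the torus scaling, excluding the ones fixed by $B$. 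For $H^x=B$, the graph is locally isomorphic to the unramified one, since the level structure is canonically the Harder--Narasimhan flag deep in the cusp, giving just $r|\Pic^0|$ rays.

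Finally, each ray carries a scalar recursion whose characteristic polynomial depends on $\lambda$. For all but finitely many $\lambda$, the solutions on the cusp glue uniquely to $\Gamma'$ with no extra contribution, because the finite linear system on $\Gamma'$ with boundary data is nondegenerate outside the spectrum of a finite matrix. This gives equality. The main obstacle I expect is case (ii), specifically the explicit orbit count: one has to check that the torus monodromy really is trivial for $H^x=U$ when $D$ is supported only at $x$, where the disjointness criterion does not literally apply. There I would argue directly that $T(k)$ acts freely on the relevant fibers modulo the center.
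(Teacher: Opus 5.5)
The framework you use for (i) is essentially the paper's: a covering of the unramified cusp with fiber $\cH(\cO_D)\backslash G(\cO_D)/T(k)U(\cO_D)$, followed by propagation. Parts (ii) and (iii), however, have a genuine gap. You carry the unramified ray picture over to the case $x\in\supp D$: one up-edge and $q^r$ down-edges at each cusp vertex, and one free parameter per fiber point. That picture is false there.

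At a ramified Hecke point $|K_x\Delta_xK_x/K_x|=q^r$. The level structures on a cusp bundle $\cL_1\oplus\cL_2$ split according to whether $a_{21}\in\pi_x\cO_{d[x]}$ (the $\mathrm{id}$-locus) or $a_{21}\in\cO_{d[x]}^\times$ (the $s$-locus). Computing $b=\Delta^{-1}\tau a\sigma$ explicitly shows the following.
\begin{itemize}
\item An $s$-vertex has exactly one up-edge, landing in the $s$-locus; these are the bijections $A_i$. It also has $q^r-1$ down-edges into the $\mathrm{id}$-locus.
\item An $\mathrm{id}$-vertex has no up-edge at all; its $q^r$ edges all go down into the $\mathrm{id}$-locus.
\end{itemize}
Hence only the $s$-locus of the first layer carries free parameters. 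The $\mathrm{id}$-values are forced by $\lambda y_i=C_iy_{i-1}$, and this, not any boundary invertibility, is where $\lambda\neq0$ enters. The numbers in (ii) and (iii) are the sizes of the $s$-slices $\smat{0}{\cO_{d[x]}^\times/k^\times}{1}{0}$ and $\smat{0}{1}{1}{0}$, not fiber sizes.

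In particular your argument for (iii) is wrong. The $B$-structure at $x$ is not canonically the HN flag: already for $d=1$ each cusp bundle carries two $B$-structures, $\smat1001$ and $\smat0110$. Nor is the graph locally the unramified one, since the out-degree is $q^r$, not $q^r+1$. You get $r|\Pic^0(X)(k)|$ right only because the $s$-slice is a single point. Likewise in (ii), your phrase ``excluding the ones fixed by $B$'' is exactly what this edge computation must justify.

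There are two smaller problems.

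\emph{Lower bound.} It is a dimension count, not an extension argument. The system $(M-\lambda)v+Ax_1=0$ has $|\Gamma'|$ equations in $|\Gamma'|+\#x_1$ unknowns, so its solution space has dimension at least $\#x_1$, and every solution propagates outward. Invertibility of $M-\lambda$, which is not implied by $\lambda\neq0$, is used only for equality at generic $\lambda$.

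\emph{Disjointness.} You have this backwards.
\begin{itemize}
\item In (i), $D_2=0$ gives $T_{D_2}=T_{D_2'}=T(k)$, so that covering is disjoint. Also $T(k)\ltimes U(\cO_D)=T(k)U(\cO_D)$, so the two bounds involve the same number.
\item In (ii) with $D_2=d[x]$ and $D'\neq0$, the disjointness criterion does \emph{not} apply: $D_2'=0$, and no $y\neq x$ lies in $\supp D_2$.
\item For $D=d[x]$ the map $p_{D,D_2}$ is the identity, so your stated ``main obstacle'' is vacuous.
\end{itemize}
You do not need disjointness anyway. Regularity ($T_{d[x]}(\tau)=1$, as $H^x=U$ is unipotent) makes $p_{D,d[x]}$ a covering with fibers of size $|\cH(\cO_{D'})\backslash G(\cO_{D'})/U(\cO_{D'})|$. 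The $s$-strings joined by the $A_i$ are simply connected, so the $A_i$ remain bijections upstairs, and that is all the propagation argument uses.
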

These are Theorem \ref{t:PGL2_ramified_at_x} and Theorem \ref{t:H^x=U_dimensions} enhanced with the $H^x=1$ case computed in \cite{KZ26}.
 The same propagation argument also determines the generalized eigenspaces. For every graph in the preceding theorem, every $m \geq 1$, and every $\lambda$ outside a finite exceptional set, one has \[\dim_{\CC}\ker\bigl((\Phi_x-\lambda\,\mathrm{id})^m\bigr)
=
m\,\dim_{\CC}\ker(\Phi_x-\lambda\,\mathrm{id}).\]
Equivalently, every successive generalized-eigenspace quotient has the same dimension as the ordinary eigenspace; see
Theorem~\ref{t:generic_generalized_eigenspaces}.

The last part of the paper gives explicit examples for \(\operatorname{PGL}_2\), especially over \(\mathbb P^1\) with several ramified points. These examples show how the graphs change when the ramification \(\cH\) changes, while the Hecke edges remain computable. They also compare full, unipotent, Borel, and torus ramification.

The Hecke correspondences studied here are the same geometric correspondences that define Hecke functors in the geometric Langlands program. For a general introduction to this topic, see \cite{gaitsgory2016recentprogressgeometriclanglands}. We do not construct Hecke eigensheaves in this paper. Our results are about graphs, automorphic functions, and the combinatorics of Hecke operators over finite fields. Still, they give explicit models for ramified Hecke correspondences with level structure. Known constructions of ramified eigensheaves in positive characteristic include Drinfeld's rank two tame unipotent case \cite{Dri87}, Heinloth's rank three tame unipotent case \cite{Hei04}, the tame unipotent case on \(\mathbb P^1\) minus two points \cite{AB09, Bez16}, as explained in \cite[Theorem~4.16]{BZN18}, the rank two four-point case \cite{Bos22}, Kloosterman sheaves and their generalizations \cite{HNY13, Yun16}, and Yun's constructions for tamely ramified and rank two hypergeometric local systems \cite{Yun14b}. From this viewpoint, the graphs in this paper are concrete automorphic objects attached to ramified geometric Hecke correspondences.

\subsection*{Overview}

In Section 2, we fix notation for ramification data, Hecke operators, and Hecke graphs.

In Section 3, we compare the adelic and geometric descriptions of \(H\)-ramified bundles. We describe Hecke correspondences for \(\operatorname{GL}_n\) and for a general reductive group, and we compute the local edge behavior needed later.

In Section 4, we prove the general cusp results. We compute the image of global automorphisms on the retained divisor, describe the fibers of the forgetful map, define regular ramification, and prove the covering theorem for Hecke graphs in the cusp. We also describe the torus obstruction to the covering being disjoint.

In Section 5, we apply the general theory to \(\operatorname{PGL}_2\). We compute ordinary and generalized eigenspaces when the Hecke point is unramified, then treat \(U\)- and \(B\)-ramification at the Hecke point. We finish with explicit examples for several ramified points on \(\mathbb P^1\).

In Section 6, we specialize to $G=\PGL_2$ and compare the generic graph-theoretic eigenspace dimension, for a Hecke point away from the ramification divisor, with the corresponding Eisenstein-series parameter count.

\subsection*{Acknowledgements}
The first author was partially supported by NSF grant DMS-2402553. The second author was partially supported by NSF grant DMS-2501558.
\section{Graphs of Hecke operators and notation}\label{s:preliminaries_on_Hecke_graphs}
For expositions, see \cite{Lor13,Alv19,AB24, KZ26}. Fix the following notation, which is compatible with \cite{KZ26} and will be used throughout the paper:
\begin{itemize}
    \item $X$ is a smooth projective geometrically connected curve $X$ over a finite field $k=\FF_q$.
    \item $|X|$ is the set of closed points in $X$. Choose $x\in |X|$.
    \item $F=k(X)$ is the function field, $\cO_y$ is the completion of the local ring of $X$ at $y$, $F_y$ is its fraction field. Let $\pi_y$ denote any uniformizer of $\cO_y$ and $k_x:=\cO_{x}/(\pi_x)$ the residue field of $x$.
    \item $\AA=\prod'_{y\in |X|}F_y$ is the adele ring of $X$.
    \item $D=\sum_y d_y[y]$ is an effective divisor over $X$. We often write $D=D'+d_x[x]$, where $D'=\sum_{y\ne x}d_y[y]$.
    \item $D=D_1+D_2$ for some effective divisors $D_1,D_2$ with $\supp D_1\cap\supp D_2=\varnothing$ and $x\notin\supp D_1$.
    \item $G$ is a connected split reductive group over $k$, $B$ is a Borel subgroup and $T$ is a maximal torus in $B$.
    \item For each point of $\supp D$, an algebraic subgroup $H^y$ of $G$ is chosen. We write $\cH=\{H^y\}_{y\in\supp D}$.
    \item $\Bun_{G,D}^\cH(k)$ is the set of isomorphism classes of principal $G$-bundles with $\cH$-ramification at $D$. A map $$p_{D,D_2}^\cH:\Bun_{G,D}^\cH(k)\to \Bun_{G,D_2}^\cH(k)$$ is the canonical projection forgetting the ramification at $D_1$. We set $p_D:=p_{D,0}$.
    \item $\deg D:=\sum_y d_y\deg(y)$ and $D_{\mathrm{red}}:=\sum_{y\in\operatorname{supp} D}1\cdot [y]$ is the reduced divisor.
    \item $\cO_D:=\prod_{y\in |X|}\cO_{y}/\pi_{y}^{d_y}$ is the ring of functions on $D$. We also set
    $$
    \cH(\cO_D):=\prod_{y\in\supp D}H^y(\cO_{d_y[y]})=\prod_{y\in\supp D}H^y(\cO_{y}/\pi_{y}^{d_y}).
    $$
    
    \item $K_\cH(D):=\prod_{y\in |X|}K_\cH(D)_y$ with
    $$
    K_\cH(D)_y:=p^{-1}(H^y(\cO_{d_y[y]}))\subset G(\cO_{y}),
    $$
    where $p:G(\cO_y)\to G(\cO_{d_y[y]})$ is the canonical projection.
    
    \noindent
    {\bfseries Note:} $\cH$ will be always considered present and will often be suppressed from the notation.

    \item $\mu$ is a dominant cocharacter of $T$ and $\Delta^\mu$ is the corresponding element of $$K_\cH(D)\backslash G(\AA)/K_\cH(D).$$
    \item $\Phi^\mu_{D,x}$ is the Hecke operator corresponding to $\Delta^\mu$.
    \item $\Gamma^\mu_{D,x}$ is the graph of $\Phi^\mu_{D,x}$.
    \item $\dim_\lambda\Gamma^\mu_{D,x}$ denotes the dimension of the $\lambda$-eigenspace of $\Phi^\mu_{D,x}$.
\end{itemize}

\begin{definition}
    Let $D$ be an effective divisor on $X$. A {\bfseries ramification datum over $D$} is a choice of an algebraic subgroup $H^y$ of $G$ for any point $y\in\supp D$. We will write $\cH=\{H^y\}_{u\in\supp D}$. We define $K_\cH(D):=\prod_{y\in |X|}K_\cH(D)_y$ with
    $$
    K_\cH(D)_y:=p^{-1}(H^y(\cO_{d_y[y]}))\subset G(\cO_{y}),
    $$
    where $p$ is the canonical reduction modulo $\pi_y^{d_y}$ map
    $$
    p:G(\cO_y)\to G(\cO_{d_y[y]}).
    $$
    
    A {\bfseries $G$-bundle with an $\cH$-ramification} is an element of $G(F)\backslash G(\AA)/K_\cH(D)$. 
\end{definition}

Throughout the whole paper, a ramification datum will be chosen and $\cH$ will usually be suppressed from the notation.

For $\lambda \in \CC$, define 
$$
\dim_{\lambda}\Gamma_{D,x}^{\mu}
:=\dim_{\CC}\ker\bigl(\Phi_{D,x}^{\mu}-\lambda\,\mathrm{id}).
$$




\section{Transition between adelic and geometric sides}\label{section 2}
This section extends \cite[Section 3]{KZ26}.

First, recall the bijection
\begin{equation*}
    G(F)\backslash G(\AA)/G(\cO)\simeq \Bun_G(X)(k).
\end{equation*}
Choose a principal $G$-bundle $\cP$ on $X$ and its trivializations $\phi_x:\cP|_{\cO_x}\simeq G_{\cO_x}$ for any $x\in X$, including the generic point $\eta$. This gives a composition of isomorphisms
$$
G_F=G_{\cO_x}|_\eta\xrightarrow{\phi_x^{-1}|_\eta}\cP|_{\cO_x}|_\eta=\cP_\eta\xrightarrow{\phi_\eta}G_F.
$$
Because we got a multiplication-invariant automorphism of $G_F$, it is given by multiplication by an element $g_x\in G(F)$. These form an element $(g_x)_{x\in |X|}\in G(\AA)$. This element depends on our choices of the trivializations $\phi_x$ which are unique up to the coordinate change by $G(\cO_x)$, and $\phi_\eta$ which is unique up to $G(F)$. Therefore, the element $(g_x)_{x\in |X|}\in G(\AA)$ is well-defined as an element of $G(F)\backslash G(\AA)/G(\cO)$.

We can generalize this bijection to the ramified case. For this, we introduce the following definition:
\begin{definition}\label{def:Bcompatible}
    Let $H$ be an algebraic subgroup of $G$, $\cP_G$ be a principal $G$-bundle on $X$, $d[x]$ a divisor supported at a point $x$, and $\cP_H|_{d[x]}$ be a reduction of $\cP_G$ to $H$ over $\cO_{d[x]}$. A trivialization $\tau$ of $\cP_G$ over $\cO_x$ or $\cO_{d[x]}$ is said to be {\bfseries compatible with $\cP_H|_{d[x]}$} if $\tau(\cP_H|_{d[x]})=H\times \Spec \cO_{d[x]}$.
\end{definition}

Having a $(D,\cH)$-ramification on a principal bundle $\cP_G$ is equivalent to having the following additional structure: For each $y\in \supp D$, a reduction of $\cP_G$ to $H$ over $\cO_{d_y[y]}$. We denote the set of such data by $\Bun_{G,D}^\cH(k)$.
This was proven for $H^y=1$ in \cite[Section 2]{KZ26}. To get this statement for a general $\cH$-ramification, take a principal $G$-bundle $\cP_G$ with a fixed reduction $\cP_{H^y}$ to $H^y$ over $\cO_{d_y[y]}$ for every $y$. As above, choose a trivialization $\phi_x$ of $\cP_G$ at each point $x\in X$, but additionally assume that $\phi_y$ is compatible with $\cP_{H^y}$ for $y\in\supp D$. This gives a well-defined point of 
$$
G(F)\backslash G(\AA)/K_\cH(D)
$$
since, by definition of $K_\cH(D)$, the action of the group $K_\cH(D)$ by changing trivializations is transitive on those compatible with $\cP_{H^y}$. For the inverse map, to a point in this double quotient, associate a principal bundle as above with 
$$
\cP_{H_y}:=\phi_y^{-1}(H^y\times\Spec \cO_{d_y[y]}).
$$
These two maps are clearly mutually inverse and give the desired bijection.

\begin{remark}\label{rem:coset_conventions}

Let $\cP_G$ be a vector bundle and $D=d[x]$. We can parametrize $H^x$-level structures over $D$ in the following way. Fix a trivialization $\phi$ of $\cP_G$ over $D$. It defines a level structure $\phi^{-1}(H^x)$ on $\cP_G$. Any other trivialization has the form $g\phi$, where $g\in G(\cO_{d[x]})$. The level structure it defines is $\phi^{-1}g^{-1}H^x$. Note that elements $g_1,g_2\in G(\cO_{d[x]})$ give the same $H^x$-reduction if and only if $g_1H^x=g_2H^x$, which identifies $H^x$-reductions on $\cP_G|_{D}$ with $H^x(\cO_{d[x]})\backslash G^x(\cO_{d[x]})$.

Recall that vertices of Hecke graphs are isomorphism classes of bundles with level structures. Therefore, we need to quotient $H^x(\cO_{d[x]})\backslash G^x(\cO_{d[x]})$ by the action of $\Aut(\cP_G)$. Let $A$ be the image of $\Aut(\cP_G)$ in $\Aut(\cP_G|_{d[x]})\simeq G(\cO_{d[x]})$, where the last identification is made using $\phi$. Specifically, this map takes $a\in \Aut(\cP_G)$ to $\phi a\phi^{-1}$. It sends a trivialization $g\phi$ to $g\phi a$, and hence the level structure $\phi^{-1}g^{-1}$ to $a^{-1}\phi^{-1}g^{-1}H$. Therefore, the $H^x$-level structures on $\cP_G$ up to isomorphism are identified with
$$
H^x(\cO_{d[x]})\backslash G(\cO_{d[x]})/A.
$$

More generally, for a ramification datum $(\cH,D)$, the level structures on $\cP_G$ are parametrized by
$$
\cH(\cO_{D})\backslash G(\cO_D)/A,
$$
where $A$ is the image of $\Aut(\cP_G)$ in $\Aut(\cP_G|_D)\simeq G(\cO_D)$ under a chosen trivialization of $\cP_G$ over $D$.

We can have a description without choosing a reference trivialization. To a trivialization $\phi:\cP_G|_{D}\simeq G\times \Spec\cO_{D}$ associate the $\cH$-level structure given by $\phi^{-1}(H^y\times\Spec\cO_D)_{y\in\supp D}$. Two trivialization $\phi_1,\phi_2$ give the same level structures if and only if $\phi_2=h\phi_1$ for some $h\in \cH(\cO_D)$. Moreover, the automorphism group $\Aut(\cP_G)$ acts on these trivializations on the right by composition, so the set of $\cH$-level structures on $\cP_G$ up to equivalence is given by the double quotient
$$
\cH(\cO_D)\backslash\mathrm{Iso}(\cP_G|_D,G\times \Spec\cO_D)/\Aut(\cP_G).
$$
Choosing a reference trivialization of $\cP_G$ and identifying $\mathrm{Iso}(\cP_G|_D,G\times \Spec\cO_D)$ and $\Aut(\cP_G|_{D})$ with $G(\cO_D)$ under this trivialization, we get the above explicit description.
\end{remark}

\begin{remark}
    Methods and results of our paper apply even in a stronger setting: instead of choosing algebraic subgroups $H^y$, we can choose arbitrary subgroups of finite groups $G(\cO_{d_y[y]})$. This makes the adelic setting unchanged, and the geometric setting is assumed to use the double coset interpretation of Remark \ref{rem:coset_conventions}. However, if one wishes to define and study the stack $\Bun_{G,D}^\cH$, one needs $H^y$ to be algebraic. 
\end{remark}

\subsection{Hecke correspondences for $\GL_n$} \label{Hecke Corres.GLn}
This is an extension of \cite[Section 3]{KZ26}. 
We use the notation from Section \ref{s:preliminaries_on_Hecke_graphs}. For $\GL_n$, there is an equivalence between principal $G$-bundles and vector bundles of rank $n$. Therefore, we will utilize the language of vector bundles. Let $\omega_r$ be the $r$-th fundamental coweight of $\GL_n$ and $\Phi_{D,x}^{\omega_r}$ be the Hecke operator corresponding to it. Note that $\Delta_x = \mathrm{diag}(\pi_x I_r, I_{n-r})$ and $\Delta_y=\mathrm{id}$ for $y\ne x$. For convenience, we set $K:=K_\cH(D)$.

\begin{proposition}\label{p:Hecke_correspondence_for_vector_bundles}
    Choose $(\cE,a)\in\mathrm{Bun}_{G, D}(k)$. Edges $(\cE,a)\to(\cE',b)$ in $\mathrm{Bun}_{G, D}(k)$ for the Hecke operator $\Phi_x^{\omega_r}$ are in bijection with equivalence classes of exact sequences
$$
\{(\cE',b),\,0\to \cE'\xrightarrow{f}\cE\to k_x^{\oplus r}\to 0\}/\Aut(\cE')\times\GL(k_x^r)
$$
 such that 
    \begin{itemize}
        \item There exist $\cO_x$-trivializations $\tilde{a}$ and $\tilde{b}$ compatible with the $H^x$-reductions $a$ and $b$ such that the localization $f_x$ satisfies
$$
    \tilde{a}\circ f_x\circ \tilde{b}^{-1}\in K_x\Delta_xK_x.
$$
    \item For each $y\ne x$, the isomorphism $\cE'_y\simeq\cE_y$ induced by $f$ intertwines $a$ and $b$.
    \end{itemize}
\end{proposition}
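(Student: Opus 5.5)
Translate the adelic definition of the edges of $\Phi_x^{\omega_r}$ into the geometric dictionary of this section, exactly as in the unramified and principal cases of \cite[Section 3]{KZ26}. Recall that the Hecke operator attached to $\Delta=\Delta^{\omega_r}$ acts on functions on $G(F)\backslash G(\AA)/K$. Edges out of the class of $g\in G(\AA)$ are the cosets $g\,k\Delta K$ with $k\Delta K\subset K\Delta K$, i.e.\ they are indexed by $K\Delta K/K$. Since $\Delta_y=1$ for $y\neq x$, such a coset differs from $gK$ only in the $x$-component. The plan is to interpret the target $g'=g\,k\Delta$ geometrically and check that the resulting data are precisely the classes of exact sequences in the statement.

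First, given $g$ representing $(\cE,a)$ with trivializations $\phi_y$ compatible with the $H^y$-reductions, an element $g'$ with $g'_y\in g_yK_y$ for $y\ne x$ and $g_x^{-1}g'_x\in K_x\Delta_xK_x$ defines a lattice, hence a vector bundle $\cE'$. This $\cE'$ agrees with $\cE$ away from $x$, so $\cE'_\eta=\cE_\eta$, and at $x$ it is the lattice $g'_x\cO_x^n$. Because $\Delta_x=\mathrm{diag}(\pi_xI_r,I_{n-r})$ has entries in $\cO_x$, the new lattice sits inside the old one. This gives an injection $f:\cE'\to\cE$ which is an isomorphism away from $x$, with cokernel $\cO_x^n/\Delta_x\cO_x^n\simeq k_x^{\oplus r}$. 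The $H$-reductions $b$ on $\cE'$ are defined by the trivializations $g'_y$, so they are well defined precisely because we quotient by $K$ on the right. For $y\ne x$, the relation $g'_y\in g_yK_y$ says that $f_y$ carries $b$ to $a$. At $x$, with compatible trivializations $\tilde a,\tilde b$ induced by $g_x,g'_x$, the map $f_x$ in coordinates is $g_x^{-1}g'_x\in K_x\Delta_xK_x$. Since changing compatible trivializations multiplies on both sides by $K_x$, this condition is independent of the choices, which justifies the "there exist" formulation.

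Conversely, given an exact sequence and $(\cE',b)$ satisfying the two conditions, choose trivializations of $\cE'$ compatible with $b$. Away from $x$, use $\phi_y\circ f_y$, which is allowed by the intertwining condition. At $x$, use $\tilde b$. The resulting adelic element $g'$ satisfies $g^{-1}g'\in K\Delta K$, with trivial components outside $x$ up to $K$. This produces an edge in $K\Delta K/K$ out of $gK$, and the two constructions are mutually inverse.

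The remaining, and main, point is the equivalence relation. Two sequences give the same edge exactly when the targets $g'K$ coincide as cosets inside $gK\Delta K$, not merely up to $G(F)$. Geometrically, this means there is an isomorphism $\cE'_1\simeq\cE'_2$ preserving level structures and commuting with the maps to $\cE$. Such an isomorphism is an element of $\Aut(\cE')$ after identification. The induced isomorphism of the cokernels gives the $\GL(k_x^r)$ factor, and rescaling the cokernel identification does not change the subsheaf. I would verify carefully that the subsheaf $f(\cE')\subset\cE$ together with the transported level structure determines the coset $g'K$. Then the quotient by $\Aut(\cE')\times\GL(k_x^r)$ is precisely the set of pairs consisting of a subsheaf $f(\cE')\subset\cE$ with quotient $k_x^{\oplus r}$ and a level structure on it satisfying the conditions. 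This identifies the set with $K\Delta K/K$-cosets based at $gK$. The delicate part is the bookkeeping at $x$: when $H^x\ne G$, one fixed subsheaf can carry several compatible $b$. The statement correctly counts these as distinct edges, which matches the fact that $K_x\Delta_xK_x/K_x$ can be larger than the unramified count.
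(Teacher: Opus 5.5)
Your proposal is correct and follows the paper's argument, which simply defers to the adelic-to-geometric dictionary of \cite[Proposition~3.2]{KZ26}: edges out of $gK$ are the cosets $hK\subset K\Delta K$, the lattice $g_xh_x\cO_x^n\subset g_x\cO_x^n$ gives $f$, compatible trivializations put $f_x$ in $K_x\Delta_xK_x$, and subsheaves with transported level structure correspond exactly to the cosets $ghK$. Only your closing aside is off: several compatible $b$ on one subsheaf do not force $|K_x\Delta_xK_x/K_x|$ above the unramified count, and in the cases of Theorem~\ref{t:hecke_gln_cases} this set has $q^{r(n-r)\deg x}$ elements, which is smaller than $|\mathrm{Gr}(r,n)(k_x)|$ for $0<r<n$.
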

\begin{proof}
The proof goes similarly to \cite[Proposition 3.2]{KZ26}.
\end{proof}

Write 
$$
D=D'+d_x[x],\qquad x\notin\supp D'.
$$
\begin{theorem}\label{t:hecke_gln_cases}
Let $P$ be the standard parabolic of
    block type $(r,n-r)$ and $U$ be the unipotent radical of $P$. Let also $P^-$ and $U^-$ be their transposes. Assume that $d_x\ge 1$. For a point
$(E,\phi)\in \Bun_{G,D}(k)$,  we have: 
\begin{enumerate}[label=(\roman*)]
    \item Assume that $U\subset H^x\subset P$. Then there are exactly
    $q^{r(n-r)\deg(x)}$ distinct subbundles $E'\subset E$ satisfying
    $E/E'\cong k_x^{\oplus r}$. For each such
    subbundle, there is exactly one compatible $H^x$-level structure $\phi'_x$. Moreover, we have a decomposition
    $$
    K_x\Delta_x K_x=\bigsqcup_{C\in M_{r\times (n-r)}(k_x)}\mat{1}{C}{0}{1}\Delta_x K_x
    $$
    parametrizing these subbundles by $M_{r\times (n-r)}(k_x)$.
    \item Assume that $U^-\subset H^x\subset P^-$ or $H^x=1$. Then there is a unique subbundle
    $E'\subset E$ with $E/E'\cong k_x^{\oplus r}$ for which there exists an edge from $(\cE,\phi)$. Moreover, we have a decomposition
    $$
    K_x\Delta_x K_x=\bigsqcup_{C\in M_{r\times (n-r)}(k_x)}\mat{1}{\pi_x^{d_x}C}{0}{1}\Delta_x K_x
    $$
    parametrizing these edges $M_{r\times (n-r)}(k_x)$.
\end{enumerate}
In both cases, the level structures at the divisor
$D':=D-d_x[x]$ transport uniquely via the induced isomorphism on
$X\setminus \{x\}$.
\end{theorem}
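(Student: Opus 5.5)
The plan is to work entirely locally at $x$, using Proposition~\ref{p:Hecke_correspondence_for_vector_bundles} to reduce the counting of edges out of $(\cE,\phi)$ to the coset space $K_x\Delta_xK_x/K_x$. Here $K_x=K_\cH(D)_x=p^{-1}(H^x(\cO_{d_x[x]}))$. By that proposition, an edge is the choice of a coset $g\Delta_xK_x\subset K_x\Delta_xK_x$. The subbundle $\cE'$ is obtained from the lattice $\tilde a^{-1}g\Delta_x\cO_x^n$, and the level structure $b$ at $x$ is the one transported by $g\Delta_x$. Away from $x$ the map $f$ is an isomorphism, so the level structures at $D'$ are forced: $b_y$ is just $a_y$ transported by $f_y$. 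This gives the last sentence of the theorem immediately.

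So the main task is to decompose $K_x\Delta_xK_x$ into left $K_x$-cosets, i.e.\ to compute $K_x/(K_x\cap\Delta_xK_x\Delta_x^{-1})$. Write elements of $G(\cO_x)$ in $(r,n-r)$ block form $\smat{A}{B}{C'}{E}$. Conjugation by $\Delta_x$ multiplies the upper-right block by $\pi_x$ and the lower-left block by $\pi_x^{-1}$. Hence $\Delta_xK_x\Delta_x^{-1}\cap K_x$ is the subgroup of $K_x$ whose upper-right block lies in $\pi_x\cO_x$, provided the lower-left block condition is automatic.

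In case (i), $H^x\subset P$ forces the lower-left block of every element of $K_x$ to lie in $\pi_x^{d_x}\cO_x\subset\pi_x\cO_x$, so the condition is indeed automatic. Since $U\subset H^x$, the upper-right block ranges freely over $M_{r\times(n-r)}(\cO_x)$. The quotient is therefore represented by $\smat{1}{C}{0}{1}$ with $C\in M_{r\times(n-r)}(k_x)$ (Teichmüller-type lifts). This gives $q^{r(n-r)\deg x}$ cosets.

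I then identify the subbundles. The lattices $\smat{1}{C}{0}{1}\Delta_x\cO_x^n$ are pairwise distinct for distinct $C$, so there are $q^{r(n-r)\deg x}$ distinct $\cE'$. Moreover, each lattice $L$ with $\cO^n/L\cong k_x^r$ arising in this way carries exactly one coset, so it has a unique compatible level structure. I also need that every $E'$ with $E/E'\cong k_x^{\oplus r}$ actually occurs. Such quotients correspond to $r$-dimensional quotients of the fiber, which are points of a Grassmannian. Here I must check that the ones in the open cell relative to the $P$-flag given by $\phi$ are exactly those counted. This is the step I would scrutinize: the claim "exactly $q^{r(n-r)\deg x}$ subbundles with $E/E'\cong k_x^r$" should be read as those admitting an edge, equivalently those in the big cell.

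In case (ii), $H^x\subset P^-$ (or $H^x=1$) forces upper-right blocks in $\pi_x^{d_x}\cO_x$, while $U^-\subset H^x$ makes lower-left blocks unconstrained modulo $\pi_x^{d_x}$. The upper-right condition for the intersection then becomes: $B\in\pi_x^{d_x+1}$ modulo the relevant terms. So the quotient is represented by $\smat{1}{\pi_x^{d_x}C}{0}{1}$. The lower-left condition $\pi_x^{-1}C'\in\pi_x^{d_x}$-compatible must again be checked to be automatic; I expect this to hold using $U^-\subset H^x$, since $\Delta_x^{-1}$-conjugates of lower-left blocks only gain a factor $\pi_x$. All these representatives produce the same lattice $\Delta_x\cO_x^n$ up to $\cO_x$-units, because $\pi_x^{d_x}C\Delta_x\cO_x^n$ already lies in the lattice when $d_x\ge1$. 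They therefore give a single subbundle but distinct level structures, hence distinct edges.

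The main obstacle is careful bookkeeping of the intersection $K_x\cap\Delta_xK_x\Delta_x^{-1}$ when $H^x$ is only sandwiched between $U$ (resp.\ $U^-$) and $P$ (resp.\ $P^-$). The Levi part of $H^x$ must not enlarge the quotient. I would handle this by factoring $K_x$ via an Iwahori-type decomposition into lower-unipotent, Levi, and upper-unipotent parts, and noting that $\Delta_x$ centralizes the Levi.
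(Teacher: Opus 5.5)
Your proposal is correct and follows the paper's proof essentially step for step. You reduce via Proposition~\ref{p:Hecke_correspondence_for_vector_bundles} to $K_x/(K_x\cap\Delta_xK_x\Delta_x^{-1})$ and compute this stabilizer in $(r,n-r)$ block form, with the upper-right block in $\pi_x\cO_x$ in case (i) and in $\pi_x^{d_x+1}\cO_x$ in case (ii). You then use the same representatives $\smat{1}{C}{0}{1}$ and $\smat{1}{\pi_x^{d_x}C}{0}{1}$, and compare lattices to get pairwise distinct subbundles in (i) and a single one in (ii).

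Your reading of the count in (i) is the right one: it counts only the subbundles that admit an edge, namely your big-cell subbundles. This is exactly what the paper's argument establishes, since it only identifies the subbundles underlying edges with $K_x/(K_x\cap\Delta_x\GL_n(\cO_x)\Delta_x^{-1})$.
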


\begin{proof}
    By Proposition~\ref{p:Hecke_correspondence_for_vector_bundles}, exactly as in the proof of \cite[Theorem 3.3]{KZ26}, a target of the Hecke
correspondence $\Phi_x^{\omega_r}$ is determined by a lattice
\[
N=g\cO_x^n\subset \cO_x^n,\qquad g\in K_x\Delta_xK_x,
\]
and the level structures on $D':=D-d_x[x]$ transport uniquely, since the
Hecke modification is an isomorphism on $X\setminus\{x\}$.

Write $g=k_1\Delta_xk_2$ with $k_1,k_2\in K_x$. Since
$k_2\in GL_n(\cO_x)$ stabilizes $\cO_x^n$, the underlying subbundle depends only on the
lattice
\[
N=k_1\Delta_x\cO_x^n.
\]
Two elements $k_1,k_1'\in K_x$ determine the same lattice if and only if
\[
k_1\Delta_x\cO_x^n=k_1'\Delta_x\cO_x^n,
\]
equivalently,
\[
k_1^{-1}k_1'\in
K_x\cap \Delta_x\GL_n(\cO_x)\Delta_x^{-1}.
\]
Hence the set of underlying subbundles is identified with
\[
K_x\Big/\Bigl(K_x\cap
\Delta_x\GL_n(\cO_x)\Delta_x^{-1}\Bigr).
\]

Write a matrix $h\in\GL_n(\cO_x)$ in block form
\[
h=
\begin{pmatrix}
X_{11}&X_{12}\\
X_{21}&X_{22}
\end{pmatrix},
\]
where $X_{11}$ is of size $r\times r$. Then
\begin{equation}\label{eq:conjugation_by_Delta}
    \Delta_xh\Delta_x^{-1}=
\begin{pmatrix}
X_{11}&\pi_x^{-1}X_{12}\\
\pi_xX_{21}&X_{22}
\end{pmatrix}.
\end{equation}
Now, we switch to case study.

{\bfseries Case $U\subset H^x\subset P$.} Since $K_x$ is the preimage of a subgroup contained in the upper parabolic $P(\cO_{d_x[x]})$,  the lower-left block of any  $h\in K_x$ satisfies
$X_{21}\in\pi_x^{d_x}M_{(n-r)\times r}(\cO_x)$; in particular $X_{21}\in\pi_xM_{(n-r)\times r}(\cO_x)$. We then see that
\begin{align*}
    K_x\cap
\Delta_x K_x\Delta_x^{-1}=K_x\cap\Delta_x \GL_n(\cO_x)\Delta_x^{-1}=\left\{\mat{X_{11}}{X_{12}}{X_{21}}{X_{22}}\in K_x:X_{12}\in\pi_xM_{r\times(n-r)}(\cO_x)\right\}.
\end{align*}
Since the set of neighbors of $g$ under the Hecke correspondence is identified with
\[
K_x\Big/\Bigl(K_x\cap
\Delta_x K_x\Delta_x^{-1}\Bigr),
\]
this shows that all bundles underlying different Hecke correspondences are pairwise distinct. 

To show that the edges are parametrized by matrices in $M_{r\times(n-r)}(k_x)$, use matrices of the form $\smat{1}{C}{0}{1}$ for all $C\in M_{r\times(n-r)}(k_x)$. For $h\in K_x\cap
\Delta_x K_x\Delta_x^{-1}$ as above, we get
$$
\mat{1}{C}{0}{1}h=\mat{X_{11}+CX_{21}}{X_{12}+CX_{22}}{X_{21}}{X_{22}}.
$$
From this, knowing that $X_{12}\in\pi_xM_{r\times(n-r)}(\cO_x)$ and $X_{21}\in \pi_x^{d_x} M_{(n-r)\times r}(\cO_x)$, we easily see that the coset
$$
\mat{1}{C}{0}{1}(K_x\cap
\Delta_x K_x\Delta_x^{-1})
$$
consists of all matrices $(X_{ij})\in K_x$ with 
$
X_{12}\equiv CX_{22}\pmod{\pi_x}.
$
Since $X_{22}$ is invertible for all matrices in $K_x$, these cosets are pairwise distinct and cover the entire $K_x$. This finishes the proof of $(i)$.

{\bfseries Case $U^-\subset H^x\subset P^-$ or $H^x=1$.} The case $H^x=1$ was covered in \cite[Theorem 3.3]{KZ26}, so we focus on $U^-\subset H^x\subset P^-$. Since $X_{12}\in\pi_xM_{r\times(n-r)}(\cO_x)$ for $h\in K_x$, (\ref{eq:conjugation_by_Delta}) implies that $K_x\subset\Delta_x \GL_n(\cO_x)\Delta_x^{-1}$. This implies the first claim of $(ii)$.

To get the second claim, note that the same equation gives
\begin{align}\label{eq:K_cap_Delta_conjugate_for_H_in_P_}
    K_x\cap
\Delta_x K_x\Delta_x^{-1}=\left\{\mat{X_{11}}{X_{12}}{X_{21}}{X_{22}}\in K_x:X_{12}\in\pi_x^{d_x+1}M_{r\times(n-r)}(\cO_x)\right\}.
\end{align}
Using similar arguments as above, we see that the cosets
$$
\mat{1}{\pi_x^{d_x}C}{0}{1}(K_x\cap
\Delta_x K_x\Delta_x^{-1}),\qquad C\in M_{r\times(n-r)}(k_x)
$$
are all distinct and cover the entire $K_x$. This finishes the proof.
\end{proof}
\begin{remark}\label{r:strengthening_of_general_connections}
    {\rm
        The conditions on $H^x$ in Theorem \ref{t:hecke_gln_cases}(ii) can be relaxed in the following way: $H^x\subset P^-$ and 
        $$
        \begin{pmatrix}
X_{11}&0\\
X_{21}&X_{22}
\end{pmatrix}\in H^x(\cO_{d_x[x]})\Rightarrow \begin{pmatrix}
X_{11}&0\\
\pi_xX_{21}&X_{22}
\end{pmatrix}\in H^x(\cO_{d_x[x]}).
        $$
        This property was implicitly used in (\ref{eq:K_cap_Delta_conjugate_for_H_in_P_}). The proof goes unchanged. This generalization includes the previous cases and any subgroup of the Levi subgroup $M$ of $P$.
    }
\end{remark}    

We finish this section with a description of the way to transfer from $G=\GL_n$ to $\bar G=\PGL_n$. Let $Z$ be the center of $G$. As before, assume $\cH$-ramification at a divisor $D=D'+d[x]$ and denote $K:=K_\cH(D)$. We need the following assumption on the subgroup $H^x$ at $x$: either $Z\subset H^x$, or there exists $i$ such that the $i$-th diagonal entry of each element of $H^x(\cO_{d[x]})$ equals $1$ modulo $\pi_x^d$. For example, such condition is satisfied by any parabolic subgroup containing $T$ and its unipotent subgroups.
\begin{proposition}\label{p:from_GL_to_PGL}
With the above ramification assumptions on $H^x$, let $\Gamma_G$ and $\Gamma_{\bar G}$ be the Hecke graphs for $G$ and $\bar G$ associated with $\Phi_x^{\omega_r}$. Then
\begin{enumerate}[label=(\roman*)]
    \item The quotient of the set of vertices of $\Gamma_G$ by the action of the abelian group $Z(F)\backslash Z(\AA)/(K\cap Z(\AA))$ is precisely $\bar G(F)\backslash \bar G(\AA)/\bar K=\Gamma_{\bar G}$. Geometrically, vertices of $\Gamma_{\bar G}$ are vector bundles with a level structure at $D$ up to tensoring with a line bundle with a level structure at $D$ (given by subgroups $H^y\cap Z$ for $y\in\supp D$).
    \item For $a,b\in \Gamma_{\bar G}$, fix a lift $\tilde a$ of $a$ to $\Gamma_G$. Then the projection between $\Gamma_G$ and $\Gamma_{\bar G}$ gives a bijection between the set of edges $a\to b$ in $\Gamma_{\bar G}$ and the set of edges between $a$ and all possible lifts of $b$ to $\Gamma_G$.
\end{enumerate}
\end{proposition}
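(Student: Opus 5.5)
The plan is to work entirely on the adelic side. Write $\bar K$ for the image of $K$ in $\bar G(\AA)$. I will use that $G(\AA)\to\bar G(\AA)$ is surjective; this holds because $H^1$ of $\mathbb{G}_m$ vanishes over fields, by Hilbert 90. The same argument gives surjectivity of $G(F)\to\bar G(F)$ and of $G(\cO_y)\to\bar G(\cO_y)$, the latter using that $\cO_y$ is local.

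For (i), I first check that $\bar K$ equals the image of $K$, i.e.\ that $K_y Z(\cO_y)$ maps onto the relevant subgroup of $\bar G(\cO_y)$. If $\bar K$ is instead defined as the preimage of $\bar H^y$, this needs a small argument at the points $y$ where $Z\not\subset H^y$. There I use the hypothesis that the $i$-th diagonal entry of every element of $H^x$ is $1$: it lets me normalize a lift by a central scalar so that it lands in $K$. With this identification, $\bar G(F)\backslash\bar G(\AA)/\bar K$ equals $G(F)Z(\AA)\backslash G(\AA)/K$. This is the quotient of $G(F)\backslash G(\AA)/K$ by the action of $Z(\AA)$. The action is well defined because $Z$ is central, and it factors through $Z(F)\backslash Z(\AA)/(K\cap Z(\AA))$. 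For the geometric description I identify $Z(F)\backslash Z(\AA)/(K\cap Z(\AA))$ with line bundles carrying $(H^y\cap Z)$-level structure at $D$, and I identify the action with tensoring.

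For (ii), I compare the Hecke correspondences. By definition, an edge $a\to b$ in $\Gamma_{\bar G}$ corresponds to a coset $\bar g\bar\Delta\bar K$ inside $\bar K\bar\Delta\bar K$ with $[\bar g\bar\Delta]=b$. The edges are thus parametrized by $\bar K\bar\Delta\bar K/\bar K$. Similarly, edges out of $\tilde a$ in $\Gamma_G$ are parametrized by $K\Delta K/K$, and their endpoints are lifts of vertices of $\Gamma_{\bar G}$. It therefore suffices to show that the natural map
$$K\Delta K/K\to \bar K\bar\Delta\bar K/\bar K$$
is a bijection. Surjectivity is clear once $\bar K$ is the image of $K$. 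For injectivity, suppose $k_1\Delta K$ and $k_2\Delta K$ have the same image. Then $k_1\Delta=k_2\Delta k z$ for some $k\in K$ and $z\in Z(\AA)$. Comparing determinants, or valuations of the central part, at every place forces $z\in Z(\cO)$. I then need $z\in K$, or at least that $z$ can be absorbed into $K$.

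I expect the injectivity step to be the main obstacle. When $Z\subset H^y$ it is immediate. In the other case I will use the diagonal-entry condition. The relation $k_2^{-1}k_1=\Delta k z\Delta^{-1}$ holds with both $k_2^{-1}k_1$ and $k$ in $K$, and $\Delta$ is diagonal. Reading off the $i$-th diagonal entry modulo $\pi_x^d$ then shows that $z\equiv 1$ at $x$. A similar argument handles the points $y\neq x$, where $\Delta_y=1$ gives $z=k^{-1}k_2^{-1}k_1\in K_y$ directly. The diagonal-entry computation needs care when conjugating by $\Delta_x$ rescales the off-diagonal blocks but leaves the diagonal blocks unchanged, as in \eqref{eq:conjugation_by_Delta}.
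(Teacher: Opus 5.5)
Your proposal is correct and is essentially the paper's proof. The paper likewise reduces (ii) to bijectivity of $K\Delta K/K\to K\Delta K Z(\AA)/KZ(\AA)$, phrased as the stabilizer identity $\Delta K\Delta^{-1}Z(\AA)\cap K=\Delta K\Delta^{-1}\cap K$, and proves it exactly as you do: a determinant argument gives $z\in Z(\cO_x)$, then either $Z\subset H^x$ or the $i$-th diagonal entry computation at $x$ finishes, and the places $y\neq x$ are trivial.

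The one thing to drop is your aside in (i). Take $\bar K$ to be the image $KZ(\AA)/Z(\AA)$ of $K$, as the paper implicitly does. The diagonal-entry hypothesis concerns only $H^x$ and cannot help at other points $y$, and a $\bar K$ defined via the image algebraic subgroup $\bar H^y\subset\bar G$ can be strictly larger than the image of $K$ (e.g.\ $H^y=\mathrm{SL}_2$ with $q$ odd).
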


\begin{proof}
The first item is clear. To prove $(ii)$, recall that after decomposing $K\Delta K$ into right cosets $\bigsqcup_i\tau_i\Delta K$, we get that all neighbors of $g\in \Gamma_G$ are $G(F)g\tau_i K$, counted with multiplicities. Same applies to $\bar G$. Therefore, item $(ii)$ is equivalent to the map
$$
K\Delta K/K\to K\Delta KZ(\AA)/KZ(\AA)
$$
being bijective. The $K$-action by left multiplication on both sets is transitive, and the stabilizers of $\Delta$ are $\Delta K\Delta^{-1}\cap K$ and $\Delta K\Delta^{-1}Z(\AA)\cap K$, respectively. Thus, we need to prove that
$$
\Delta K\Delta^{-1} Z(\AA)\cap K=\Delta K\Delta^{-1}\cap K.
$$

Since $Z(\AA)\subset \prod_{y\in |X|}Z(F_y)$, it is sufficient to prove that
$$
\Delta_y K_y\Delta_y^{-1} Z(F_y)\cap K_y=\Delta_y K_y\Delta_y^{-1}\cap K_y,\qquad \forall_{y\in |X|}.
$$
For $y\ne x$, this is evident since $\Delta_y=\mathrm{id}$ in this case. Thus, we are left with the case $y=x$.

Take $\gamma \in \Delta_x K_x\Delta_x^{-1} Z(F_x)\cap K_x$. Write
$$
    \gamma = \Delta_x p \Delta_x^{-1} z,\qquad p\in K_x,z\in Z(F_x).
$$
It is enough to prove that $z \in K_x$, which means that $z\mod\pi_x^{d_x}\in Z(\cO_{d_x[x]})\cap H^x(\cO_{d_x[x]})$.

Since $p,\gamma\in \GL_n(\cO_x)$, we have
\[
\det(p),\det(\gamma)\in \cO_x^\times.
\]
On the other hand,
\[
\det(\gamma)
=
\det(\Delta_x p\Delta_x^{-1})\zeta^n
=
\det(p)\zeta^n.
\]
Hence $\zeta^n\in \cO_x^\times$, so $\zeta\in\cO_x^\times$ and therefore $z\in Z(\cO_x)$.

If $Z\subset H^x$, then $Z(\cO_{x})\cap K_x=Z(\cO_x)$ and we are done. Assume now that there is  an index $i$  such that the $i$-th diagonal entry of  $H^x(\cO_{d[x]})$ is always  $1$. Since $p,\gamma\in K_x$, their reductions modulo $\pi_x^{d_x}$ lie in $H^x$. Conjugation by $\Delta_x$ does not change diagonal entries, so the $i$-th diagonal entries of $p$ and $\gamma=\Delta_xp\Delta_x^{-1}z$ are respectively $p_{ii}$ and $p_{ii}\zeta$. Therefore
\[
p_{ii},\,p_{ii}\zeta\equiv 1
\pmod{\pi_x^{d_x}}.
\]

It follows that $\zeta\equiv 1\pmod{\pi_x^{d_x}}$, and therefore
\[
z\bmod \pi_x^{d_x}=\mathrm{id}\in Z(\cO_{d_x[x]})\cap H^x.
\]
 Thus, $z\in K_x$, so we are done.

\end{proof}

\subsection{Hecke correspondences for general $G$}\label{Hecke Corres.G} We use the notation from Section \ref{s:preliminaries_on_Hecke_graphs}. We describe the Hecke operator $\Phi_{D, x}^\mu$ in adelic and geometric languages. Since the proofs are similar to the $\GL_n$ case, we will omit them.

\begin{proposition}\label{t:general_hecke_correspondence}
Fix a vertex $(\mathcal{P}, \psi) \in \Bun_{G,D}(k)$. The directed edges $(\mathcal{P}, \psi) \rightarrow (\mathcal{P}', \psi')$ in the Hecke graph $\Gamma_{D, \mu, x}$ of $\Phi_{D, x}^\mu$ are in bijection with the set of equivalence classes of pairs $(\mathcal{P}', \beta)$, where $\mathcal{P}'$ is a principal $G$-bundle on $X$ and $\beta: \mathcal{P}|_{X \setminus x} \xrightarrow{\sim} \mathcal{P}'|_{X \setminus x}$ is an isomorphism, such that:
\begin{enumerate}[label=(\roman*)]
    \item For any trivializations $\tilde{\psi}_x, \tilde{\psi}'_x$ of $\mathcal{P}|_{\mathcal{O}_x},\mathcal{P}'|_{\mathcal{O}_x}$ lifting $\psi_x,\psi_x'$, respectively, the local transition element $\tilde{\psi}_x \circ \beta^{-1} \circ (\tilde{\psi}'_x)^{-1} \in G(F_x)$ lies in the double coset $K_x \Delta_x^\mu K_x$.
    \item The level structure is strictly preserved away from the modification point, meaning $\psi'_y \circ \beta|_{D'} = \psi_y$ for any $y\in\operatorname{supp}D'$.
\end{enumerate}
\end{proposition}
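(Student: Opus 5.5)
The plan is to follow the unramified dictionary recalled at the start of this section, together with the proof of Proposition~\ref{p:Hecke_correspondence_for_vector_bundles} (i.e.\ \cite[Proposition 3.2]{KZ26}). We translate both sides into adelic terms and compare.

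\textbf{Adelic description of the edges.} Write the vertex $(\mathcal P,\psi)$ as a double coset $G(F)gK$, with $K=K_\cH(D)$. The trivializations $\phi_y$ used to produce $g$ are chosen compatible with $\psi_y$ for $y\in\supp D$. Decompose $K\Delta^\mu K=\bigsqcup_i \tau_i\Delta^\mu K$ into right cosets; this is a finite union, with $\tau_i\in K_x$ and trivial components away from $x$. By definition of the Hecke operator, the edges out of $G(F)gK$ are the vertices $G(F)g\tau_i\Delta^\mu K$, one for each $i$. Equivalently, they are the cosets $hK\subset gK\Delta^\mu K$, modulo the left action of the stabilizer.

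\textbf{Geometric description of the same set.} To a coset $h=g\tau_i\Delta^\mu$ attach the bundle $\mathcal P'$ glued from the same generic trivialization $\phi_\eta$ and the local trivializations $\phi'_y:=h_y^{-1}g_y\phi_y$. For $y\neq x$ we have $h_y=g_y$, so $\phi'_y=\phi_y$. Hence $\beta:=\mathrm{id}$ on $\mathcal P_\eta$ extends to an isomorphism $\mathcal P|_{X\setminus x}\simeq \mathcal P'|_{X\setminus x}$, since both bundles are glued from identical data there. The level structure $\psi'_y:=\phi_y'^{-1}(H^y)$ agrees with $\psi_y$ transported by $\beta$ for $y\in\supp D'$, which is condition (ii). At $x$, the transition element $\phi_x\circ\beta^{-1}\circ\phi_x'^{-1}$ equals $g_x^{-1}h_x=\tau_i\Delta^\mu_x\in K_x\Delta^\mu_xK_x$.

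Changing the lifts $\tilde\psi_x,\tilde\psi'_x$ multiplies this element on the left and right by elements of $K_x$. The reason is that $K_x$ is exactly the group of changes of $\cO_x$-trivializations compatible with the given $H^x$-reduction mod $\pi_x^{d_x}$. So condition (i) is independent of the lifts, and the word ``any'' in (i) is justified.

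\textbf{Converse.} Given $(\mathcal P',\beta)$ satisfying (i) and (ii), choose trivializations of $\mathcal P'$ as follows. At the generic point, use $\phi_\eta\circ\beta^{-1}$. At $y\neq x$, use $\phi_y\circ\beta^{-1}$; this is legitimate because $\beta$ is an isomorphism near $y$, and it is compatible with $\psi'_y$ by (ii). At $x$, use a lift of $\psi'_x$. The resulting adelic element is $h$ with $h_y=g_y$ for $y\ne x$, and $h_x\in g_xK_x\Delta_x^\mu K_x$ by (i). So $hK$ is one of the cosets above. These two constructions are mutually inverse once we define the equivalence: $(\mathcal P'_1,\beta_1)\sim(\mathcal P'_2,\beta_2)$ if there is an isomorphism $\mathcal P'_1\to\mathcal P'_2$ respecting the level structures and intertwining $\beta_1$ with $\beta_2$. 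This matches the $K$-ambiguity of the trivializations of $\mathcal P'$.

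\textbf{Main obstacle.} The delicate point is the bookkeeping of multiplicities and of the equivalence relation. The same target vertex may occur via several cosets $\tau_i$, and these must be counted as distinct edges. On the geometric side this corresponds to distinct pairs $(\mathcal P',\beta)$ whose underlying level bundles are isomorphic, but not by an isomorphism compatible with $\beta$. I would handle this exactly as in the $\GL_n$ case: fix $\beta$ as part of the data. The stabilizer of $gK$ in $G(F)$, namely $\Aut(\mathcal P,\psi)$, acts on both sides compatibly, and edges are counted before quotienting by it. This mirrors the quotient by $\Aut(\mathcal E')$ in Proposition~\ref{p:Hecke_correspondence_for_vector_bundles}.
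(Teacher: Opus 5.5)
Your proposal is correct and is essentially the argument the paper has in mind. The paper omits the proof, deferring to the $\GL_n$ case and \cite[Proposition 3.2]{KZ26}, which is this same adelic--geometric dictionary: $\beta$ is kept as part of the data, and pairs are identified by isomorphisms of the target that intertwine the $\beta$'s. The one thing to fix is the phrase ``modulo the left action of the stabilizer'' in your first paragraph, which contradicts your (correct) last paragraph: edges out of $G(F)gK$ are indexed by all of $gK\Delta^\mu K/K$, so every vertex has out-degree $|K_x\Delta^\mu_xK_x/K_x|$, and quotienting by $\Aut(\mathcal P,\psi)$ would merge distinct edges.
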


For computations, the following statement will be useful. Let \(\mathcal H_1=\{H_1^y\}\) and \(\mathcal H_2=\{H_2^y\}\) be two ramification data over $D$ with
\[
H_1^y\subset H_2^y\subset G(\cO_{d_y[y]}),
\qquad \text{for all }y\in\operatorname{supp}D,
\]
and let \(K^1,K^2\) be the corresponding compact open subgroups of $G(\AA)$. Let
\(\Gamma_D^{\mathcal H_1}\) and \(\Gamma_D^{\mathcal H_2}\) denote the corresponding graphs. The above assumption gives a surjective map of sets 
    \begin{equation}\label{eq:refining_level_structures}
        \Gamma^{\cH_1}_{D}\to \Gamma^{\cH_2}_{D}
    \end{equation}
    given by extending the $H_1^y$-reductions to $H_2^y$ for every $y\in\supp D$.
\begin{proposition}\label{p:change_of_ramification}
    Assume that the map
    \begin{equation}\label{eq:change_of_ramification}
        K^1_x\Delta_x K^1_x/K^1_x\to K^2_x\Delta_x K^2_x/K^2_x
    \end{equation}
    induced by the inclusion is a bijection. Then for any $a,b\in \Gamma^{H^2}_{D}$ and a lift $\tilde a\in \Gamma^{H^1}_{D}$ of $a$ the map (\ref{eq:refining_level_structures})
    induces a bijection between the set of edges $a\to b$ and the set of edges from $\tilde a$ to all possible lifts of $b$ to $\Gamma^{\cH_1}_{D}$.
\end{proposition}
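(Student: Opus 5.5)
The argument will mirror the proof of Proposition \ref{p:from_GL_to_PGL}(ii), with the central subgroup $Z(\AA)$ replaced by the passage from $K^1$ to $K^2$.

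First we describe edges adelically. Decompose
\[
K^i\Delta K^i=\bigsqcup_{j}\tau^i_j\Delta K^i,\qquad i=1,2.
\]
The out-neighbours of a vertex $G(F)gK^i$ are then the classes $G(F)g\tau^i_j\Delta K^i$, counted with multiplicity. Since $\Delta_y=\mathrm{id}$ and $K^i_y\Delta_yK^i_y/K^i_y$ is a point for $y\ne x$, we have
\[
K^i\Delta K^i/K^i=K^i_x\Delta_xK^i_x/K^i_x,
\]
so the relevant index sets are exactly the two sides of (\ref{eq:change_of_ramification}). Choose a representative $g\in G(\AA)$ of $\tilde a$; its image $G(F)gK^2$ is $a$. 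An edge $a\to b$ in $\Gamma^{\cH_2}_D$ is then an index $c\in K^2_x\Delta_xK^2_x/K^2_x$ with $G(F)gcK^2=b$. An edge from $\tilde a$ to a lift of $b$ is an index $c'\in K^1_x\Delta_xK^1_x/K^1_x$ with $G(F)gc'K^1\mapsto b$.

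The map on edges sends $c'$ to its image in $K^2_x\Delta_xK^2_x/K^2_x$. It is compatible with endpoints because the projection (\ref{eq:refining_level_structures}) sends $G(F)gc'K^1$ to $G(F)gc'K^2$. By hypothesis this map on index sets is a bijection. For a fixed $b$, it therefore restricts to a bijection between the indices landing in the fibre over $b$ and the indices giving edges $a\to b$. This is the claimed bijection.

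The remaining care concerns multiplicities and independence of choices. Edges are counted as elements of the coset sets, not as target vertices, so distinct $c'$ can share a target without causing any problem. The construction should not depend on the representative $g$ of $\tilde a$. Replacing $g$ by $\gamma gk$ with $\gamma\in G(F)$ and $k\in K^1\subset K^2$ permutes both index sets by left multiplication by $k$, and this action is compatible with the inclusion. This compatibility is the step I would check most carefully. It also matches the geometric description of edges in Proposition \ref{t:general_hecke_correspondence}, where the level structures on $D'$ are carried along unchanged.
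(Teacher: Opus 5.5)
Your proof is correct and is essentially the paper's argument: both index the outgoing edges of $\tilde a$ and of $a$ by $K^i_x\Delta_xK^i_x/K^i_x$ using a common representative $g$, and then restrict the hypothesized bijection, which is compatible with the projection on targets, to the fibre over $b$. Your check that the indexing does not depend on the choice of $g$ (via left multiplication by $k\in K^1\subset K^2$) is a detail the paper leaves implicit.
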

\begin{proof}
    The assumption is equivalent to the following: if we write
    $$
    K^1_x\Delta_x K^1_x=\bigsqcup_i \tau_i K_x^1
    $$
    for some $\tau_i\in K^2_x$, then
    $$
    K^2_x\Delta_x K^2_x=\bigsqcup_i \tau_i K_x^2.
    $$
    Therefore, in both graphs, an element $g\in G(\AA)$ is connected to $\tau_ig$ for all $i$, where $\tau_i$ are considered lifted to elements in $G(\AA)$ such that $\tau_{i,y}=\mathrm{id}$ for all $y\in |X|\setminus x$. This clearly induces the bijection stated.
\end{proof}
\begin{remark}\label{r:change_of_ramification}
    {\rm
     Proposition \ref{p:change_of_ramification} is a powerful tool for getting new examples from old ones. Notice that the assumption depends on the chosen ramification at $x$. Therefore, if $H^x_1=H^x_2\subset G$ and $H_1^y\subset H_2^y\subset G$ for $y\in\supp D\setminus x$, the graph for $\cH_2$ is obtained by simply identifying vertices in the graph for $\cH_1$ and keeping the same set of edges. In particular, if one computes the example with $H^y=\mathrm{id}$ for all $y\ne x$, then one automatically gets the example for any choice of $H^y$, $y\ne x$ with the same $H^x$.
    }
\end{remark}

\newpage

\section{Hecke graphs at the cusp} \label{s:general_G}
\subsection{Notation and preliminaries}
As usual, we use notation of Section \ref{s:preliminaries_on_Hecke_graphs}. Additionally, we introduce the following notation,
compatible with \cite{Schieder2015} and \cite{KZ26}:
\begin{itemize}
    \item $T$ is a fixed split maximal torus of $G$, and $B$ is a fixed Borel subgroup of $G$
    containing $T$.
    \item $U$ is the unipotent radical of $B$.
    \item $\Phi=\Phi(G,T)$ is the root system of $G$ with respect to $T$, $W$ is the Weyl group of $G$.
    \item $\Phi^+=\Phi^+(G,T,B)$ and $\Phi^-=-\Phi^+$ are the sets of positive and negative roots.
    \item $\alpha_1,\ldots,\alpha_n$ and $\check\alpha_1,\ldots,\check\alpha_n$ are the simple roots
    and simple coroots, respectively.
    \item For a root $\alpha\in \Phi$, $U_\alpha$ is the corresponding root subgroup of $G$.
    \item $\check\Lambda_G:=X_*(T)$ and $\Lambda_G:=X^*(T)$ are the coweight and weight lattices,
    respectively.
    \item For $\chi\in X^*(T)$, we define $k_{\chi}$ to be the corresponding $1$-dimensional representation of $T$.
\end{itemize}

For a root $\alpha\in \Phi$ and a coweight $\nu\in X_*(T)$, we write
\[
\langle \alpha,\nu\rangle\in \mathbb Z
\]
for the natural pairing.

Let $P$ be a parabolic subgroup of $G$ containing $B$. We define the following notions:
\begin{itemize}
    \item $I_P\subset \{1,\ldots,n\}$
    is the set of $i$ such that
    $
    \fg_{-\alpha_i}\subset \operatorname{Lie}(P).
    $
    This set characterizes $P$ uniquely.
    \item $U(P)$ is the unipotent radical of $P$, and
    $
    M:=P/U(P)
    $
    is the Levi quotient.
    \item We write $\check\Lambda_M$ and $\Lambda_M$ for the coweight and weight lattices of $M$.
     \item For a character $\chi:P\to \mathbb G_m$, we also write $k_\chi$ for the corresponding
    $1$-dimensional representation of $P$.
    \item A character $\chi:P\to \mathbb G_m$ is called {\bfseries dominant} if its restriction to $T$
    is a nonnegative integral combination of the simple roots $\alpha_i$ with $i\notin I_P$.
    \item We write
    \[
    \Lambda_{G,P}:=\{\chi:P\to \mathbb G_m \mid \chi|_{Z(G)^\circ}=1\},
    \qquad
    \check\Lambda_{G,P}:=\operatorname{Hom}_{\mathbb Z}(\Lambda_{G,P},\mathbb Z).
    \]
    \item If $\mathcal P_P$ is a principal $P$-bundle on $X$, its degree is the element
    \[
    \deg_P(\mathcal P_P)\in \check\Lambda_{G,P}
    \]
    characterized by
    \[
    \langle \chi,\deg_P(\mathcal P_P)\rangle
    :=
    \deg\bigl(\mathcal P_P\times^P k_\chi\bigr)
    \qquad\text{for all }\chi\in \Lambda_{G,P}.
    \]
    \item An element $\check\lambda_P\in \check\Lambda_{G,P}$ is called {\bfseries dominant} if
    \[
    \langle \chi,\check\lambda_P\rangle\ge 0
    \]
    for every dominant character $\chi\in \Lambda_{G,P}$, and it is called
    {\bfseries dominant $P$-regular} if
    \[
    \langle \chi,\check\lambda_P\rangle>0
    \]
    for every nontrivial dominant character $\chi\in \Lambda_{G,P}$.
\end{itemize}

Fix a decomposition $D=D_1+D_2$ into a sum of effective $D_1$ and $D_2$ satisfying
\begin{equation}\label{eq:d1_d2_condition}
    \operatorname{supp}(D_1)\cap\operatorname{supp}(D_2)=\varnothing,
\qquad
x\notin\operatorname{supp}(D_1).
\end{equation}
We have a map
$$
p_{D,D_2}:\Bun_{D}^\cH(X)(k)\to \Bun_{D_2}^\cH(X)(k)
$$
forgetting the level structure outside $D_2$.

\begin{definition}
A principal $G$-bundle $\cP_G$ on $X$ is called {\bfseries semi-stable} if for every parabolic
subgroup $P\subset G$, every reduction $\cP_P$ of $\cP_G$ to $P$, and every dominant character
$\chi:P\to \mathbb G_m$ whose restriction to $Z(G)^\circ$ is trivial, one has
\[
\deg\bigl(\cP_P\times^P k_\chi\bigr)\le 0.
\]
\end{definition}

\begin{theorem} \label{HN reduction}
Let $\mathcal P_G$ be a principal $G$-bundle on $X$. Then there exists a unique parabolic
subgroup $P\subset G$, a unique dominant $P$-regular element $\check\lambda_P\in \check\Lambda_{G,P}$, and a unique reduction $\mathcal P_P$ of $\mathcal P_G$ to $P$ such that 
\[
\mathcal P_P\in\Bun^{ss}_{P,\check\lambda}
:=
\left\{
\mathcal P_P \in \Bun_P
\;\middle|\;
\deg(\mathcal P_P)=\check\lambda
\text{ and }
\mathcal P_P/U(P)\text{ is semi-stable}
\right\}.
\]

Equivalently, if $M:=P/U(P)$ is the Levi quotient, then the induced $M$-bundle
$\mathcal P_M:=\mathcal P_P/U(P)$
is semi-stable, and the degree of the reduction $\mathcal P_P$ is dominant $P$-regular.
\end{theorem}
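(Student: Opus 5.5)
The statement is the existence and uniqueness of the Harder--Narasimhan reduction. I would prove it by the standard route (Behrend; Schieder): reduce to canonical filtrations of vector bundles via a faithful representation, or argue intrinsically with the ``maximal destabilizing'' reduction. I take the intrinsic route, following \cite{Schieder2015}.

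\emph{Existence.} For each standard parabolic $P\supset B$, consider reductions $\cP_P$ of $\cP_G$ and their degrees $\deg_P(\cP_P)\in\check\Lambda_{G,P}$. First I would show a boundedness statement. Choose a faithful representation $G\hookrightarrow\GL(V)$. Each reduction to $B$ gives a flag in the associated vector bundle $V_{\cP_G}$, and degrees of subbundles of a fixed vector bundle are bounded above. Hence the set of degrees $\deg_B(\cP_B)$, paired with dominant characters, is bounded above. Since $X$ is projective and $\cP_G$ is generically trivial, $B$-reductions exist. Among all pairs $(P,\cP_P)$, I would pick one maximizing $\deg_P(\cP_P)$ in a suitable order: first its projection to $\check\Lambda_{G,B}\otimes\mathbb Q$, with ties broken by taking $P$ maximal. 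Two verifications follow.
\begin{enumerate}[label=(\alph*)]
\item $\cP_M$ is semistable. If not, a destabilizing reduction of $\cP_M$ to a parabolic of $M$ lifts to a reduction of $\cP_G$ to a smaller parabolic $Q\subset P$ with strictly larger degree, contradicting maximality.
\item $\deg_P(\cP_P)$ is dominant $P$-regular. If $\langle\chi,\deg\rangle\le0$ for some nontrivial dominant $\chi$, I would extend $\cP_P$ to the larger parabolic $P'$ obtained by adding the corresponding simple root. This does not decrease the degree in the chosen order, contradicting the tie-break.
\end{enumerate}

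\emph{Uniqueness.} This is the main obstacle. Suppose $(P,\cP_P,\check\lambda_P)$ and $(Q,\cP_Q,\check\mu_Q)$ both satisfy the conditions. Over the generic point, the two reductions are in relative position $w\in W_P\backslash W/W_Q$ (Bruhat decomposition). The intersection $\cP_P\cap\cP_Q$ then gives a reduction to $P\cap {}^wQ$, defined on an open set and extended to all of $X$ by properness of flag varieties over a curve.

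I would then compare degrees using the key inequality: for a semistable $M$-bundle and any reduction to a parabolic of $M$, the degree lies in the negative cone. Applying this to $\cP_M$ with the reduction induced from $\cP_Q$ shows that $w^{-1}\check\mu_Q$ is bounded by $\check\lambda_P$ in the dominance order. By symmetry, $\check\lambda_P$ is bounded by $\check\mu_Q$. Regularity then forces $w=1$ and $P=Q$, and the generic equality of the reductions gives $\cP_P=\cP_Q$. The delicate part is the convexity bookkeeping of degrees under Weyl translation. Here I would use that dominant regular elements are the unique maxima of their $W$-orbits in the dominance order.

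\emph{Equivalent formulation.} The equivalence with the $\Bun^{ss}_{P,\check\lambda}$ description is just a restatement of the definitions.
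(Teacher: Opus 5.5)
The paper gives no argument of its own here; it simply cites \cite[Theorem 2.3.3(a),(c) and Remark 2.4.1]{Schieder2015}. Your existence sketch is fine in outline. In (a), take the destabilizing reduction of $\cP_M$ to a maximal parabolic of $M$, so that the slope increases by a positive multiple of a fundamental coweight of $M$, which is a positive combination of simple coroots. The uniqueness part, however, has a genuine gap exactly at the comparison step.

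\textbf{The comparison inequality is missing.} First, the generic reduction to $P\cap{}^wQ$ does not extend over $X$. This group is not parabolic in general, and $P/(P\cap{}^wQ)\cong PwQ/Q$ is a Bruhat cell, not proper. For $\GL_2$, $P=Q=B$, $w=s$, it is a splitting $E=E_1\oplus F_1$ that exists only generically. Only the induced reduction of $\cP_M$ to the parabolic $M\cap{}^wQ$ extends, and relating its degree to $\check\mu_Q$ is the whole point. Semistability of $\cP_M$ cannot do this. If $P=B$, then $\cP_M$ is a $T$-bundle and your key inequality is vacuous, yet you must still exclude two distinct maximal line subbundles $E_1\ne F_1$ of a rank-two bundle; this is done by the nonzero map $F_1\to E/E_1$.

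The missing idea is the general version of this map. Take $w$ of minimal length in $W_PwW_Q$, and let $\cP_{P_1}$, with $P_1=(M\cap{}^wQ)U(P)$, be the refinement of $\cP_P$. Its Levi is $L=M\cap{}^wM_Q$ and its slope is $\check\lambda_P+v^M$, where $v^M\le 0$ by semistability of $\cP_M$. For a dominant character $\lambda$ of $Q$, the line subbundle $\cP_Q\times^Qk_\lambda\subset\cP_G\times^GV_\lambda$ lies in the step of the $\cP_{P_1}$-filtration whose graded piece contains the weight $w\lambda$. It projects generically nonzero, hence injectively, onto the line bundle $\cP_L\times^Lk_{w\lambda}$. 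This gives
\[
\langle\lambda,\check\mu_Q\rangle\le\langle w\lambda,\check\lambda_P+v^M\rangle\le\langle w\lambda,\check\lambda_P\rangle\le\langle\lambda,\check\lambda_P\rangle,
\]
where the middle step holds because $w\lambda$ is $M$-dominant. This argument works in any characteristic. Deducing the comparison instead from semistability of associated bundles $\cP_M\times^MW$ would fail over $\FF_q$.

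\textbf{The Weyl element is on the wrong side.} Your inequality $w^{-1}\check\mu_Q\le\check\lambda_P$ does not yield the conclusion. If $\check\lambda_P=\check\mu_Q$ is dominant, then both $w^{-1}\check\mu_Q\le\check\lambda_P$ and its mirror image $w\check\lambda_P\le\check\mu_Q$ hold for every $w$, so they can never force $w=1$. Also, ``by symmetry, $\check\lambda_P\le\check\mu_Q$'' silently drops the $w$.

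With the chain above, symmetry yields $\check\mu_Q\le\check\lambda_P\le\check\mu_Q$ in the dominance order, using that $\check\mu_Q$ is orthogonal to the roots of $M_Q$ and that $\check\lambda_P$ is dominant. Hence $\check\lambda_P=\check\mu_Q$ and $P=Q$, and every inequality in the chain is an equality. For a $P$-regular $\lambda$ this gives $\langle\lambda-w\lambda,\check\lambda_P\rangle=0$, so $\lambda-w\lambda$ lies in the root span of $M$. Comparing norms, $w\lambda=\lambda$, so $w\in W_P$. Only now does generic agreement of the two reductions, together with separatedness of $G/P$, give $\cP_P=\cP_Q$.
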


\begin{proof}
Follows from \cite[Theorem 2.3.3(a),(c) and Remark 2.4.1]{Schieder2015}.

\end{proof}

In the situation of the above theorem, we call $\cP_P$ the {\bfseries Harder-Narasimhan (HN) reduction of $\cP_G$}.

 We say that {\bfseries $\cP_G$ has HN reduction to $B$},
if the canonical Harder--Narasimhan reduction of $\cP_G$ is a reduction $\cP_B$ to the Borel subgroup $B$. In that case, we write
\[
\cP_T:=\cP_B/U
\]
for the induced $T$-bundle, and for each $\alpha\in \Phi^+$ we set
\[
\cL_\alpha:=\cP_T\times^T k_\alpha,
\]
where $k_\alpha$ is the one-dimensional $T$-representation of weight $\alpha$.

We will be interested in the cusp locus of $\Bun_G$.

\begin{definition}[The $D$-cusp locus]\label{def:Dcusp}
A $G$-bundle $\cP$ on $X$ is {\bfseries in the $D$-cusp} if it has HN reduction to $B$ and satisfies
\[
\deg(\cL_\alpha)>2g-2+\deg(D)\qquad\text{for all }\alpha\in \Phi^+.
\]
The loci of such bundles (with level structures) in $\mathrm{Bun}_G$ and $\mathrm{Bun}_{G,D}$ are denoted by 
$\mathrm{Bun}_{G}^{D\text{-cusp}}$ and $\mathrm{Bun}_{G,D}^{D\text{-cusp}}$, respectively.
\end{definition}

As in Section \ref{s:preliminaries_on_Hecke_graphs}, fix a dominant coweight $\mu\in X_*(T)$.

\begin{definition}[Cusp locus]\label{def:deepcusp}
A $G$-bundle $\cP\in \mathrm{Bun}_{G}(k)$ is {\bfseries in the $(D,x,\mu)$-cusp}
if $\cP$ has HN reduction to $B$ and
\[
\deg(\cL_\alpha)>2g-2+\deg(D)+\langle \alpha,w(\mu)\rangle\,\deg(x)
\qquad\text{for all }\alpha\in \Phi^+, w\in W.
\]
The loci of such bundles (with level structures) in $\mathrm{Bun}_G$ and $\mathrm{Bun}_{G,D}$ are denoted respectively by 
$\mathrm{Bun}_{G}^{D\text{-cusp}}(D,x,\mu)$ and $\mathrm{Bun}_{G,D}^{D\text{-cusp}}(D,x,\mu)$.
\end{definition}

We use the shorthand
\[
G(\mathcal O_D),\qquad B(\mathcal O_D),\qquad U(\mathcal O_D),\qquad T(\mathcal O_D)
\]
for the $\mathcal O_D$-points. We regard $T(k)\subset T(\mathcal O_D)$ via constant sections.

\subsection{Fibers of the structure forgetting map}

For the next results, it is convenient to define the following object:
\begin{definition}\label{def:huge_torus_intersection}
    Let $\cH$ be a ramification datum over $D$ and $\tau=(\tau_y)_{y\in\supp D}\in G(\cO_D)$. We set
    $$
    T_D(\tau):=T(k) \cap \bigcap_{y\in\supp D}\tau_y H^y(\mathcal O_{d_y[y]})\tau_y^{-1}\,U(\mathcal O_{d_y[y]}).
    $$
\end{definition}

\begin{theorem}\label{t:automorphisms_reductive_general}
Let $\mathcal{P}_G$ be a principal $G$-bundle on $X$ which is $D$-cusp (Definition~\ref{def:Dcusp}), and $D=D_1+D_2$ as in (\ref{eq:d1_d2_condition}). Fix a ramification datum $\cH$ over $D_2$ and let $\phi_{D_2}=(\cP_{H^y}|_{d_y[y]})_{y\in\supp D_2}$ denote an $\cH$-level structure over $D_2$. Then:
\begin{enumerate}[label=(\roman*)]
    \item The canonical Harder-Narasimhan $B$-reduction $\mathcal{P}_B$ is induced from $\mathcal{P}_T := \mathcal{P}_B/U$, i.e.,
    \[
    \mathcal{P}_B \simeq \mathcal{P}_T \times^T B.
    \]
    \item Fix a $\cP_B$-compatible trivialization $\psi: \mathcal{P}_G|_{D} \simeq G \times \operatorname{Spec}(\mathcal{O}_{D})$ in the sense of Definition \ref{def:Bcompatible} and assume that the image of the level structure $\cP_{H^y}|_{d_y[y]}$ at $y\in\supp D_2$ is 
    $$
    \tau_y\cdot(H^y\times \Spec\cO_{d_y[y]}),\qquad \text{for some }\tau=(\tau_y)_{y\in\supp D_2}\in G(\cO_{D_2}).
    $$  
    Under the induced identification $\operatorname{Aut}(\mathcal{P}_G|_{D_1}) \cong G(\mathcal{O}_{D_1})$, the image of the restriction homomorphism
    \[
    \mathrm{res}_{D_1}: \operatorname{Aut}(\mathcal{P}_G, \phi_{D_2}) \longrightarrow \operatorname{Aut}(\mathcal{P}_G|_{D_1}) \cong G(\mathcal{O}_{D_1})
    \]
    coincides with the subgroup of $B(\mathcal{O}_{D_1}) \subset G(\mathcal{O}_{D_1})$ given by 
    \[
    \begin{cases}
        T(k)\ltimes U(\cO_{D_1}), &D_2=0,\\
        T_{D_2}(\tau)\ltimes U(\cO_{D_1}),&D_2\ne 0.
    \end{cases}
    \]
\end{enumerate}
\end{theorem}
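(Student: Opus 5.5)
We split the argument according to the two parts of the statement. The key input throughout is the vanishing of $H^1$ and the splitting of filtrations deep in the cusp. Our model is the principal-ramification case treated in \cite{KZ26}.

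\textbf{Part (i).} Extensions of $\cP_T\times^T B$ by $U$ are classified by nonabelian cohomology $H^1(X,\cP_T\times^T U)$, where $T$ acts on $U$ by conjugation. Filter $U$ by the lower central series, compatibly with root heights. The graded pieces are sums of $\cL_\alpha$ for $\alpha\in\Phi^+$. We have $\deg\cL_\alpha>2g-2$, so $H^1(X,\cL_\alpha)=H^0(X,\cL_\alpha^{-1}\otimes\Omega)^\vee=0$ by Serre duality. Inducting along the filtration, each successive extension problem is then trivial, so $H^1(X,\cP_T\times^TU)$ is a point and $\cP_B\simeq\cP_T\times^TB$.

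\textbf{Automorphisms of $\cP_G$.} We first use the HN reduction. By uniqueness of the HN reduction (Theorem~\ref{HN reduction}), every automorphism of $\cP_G$ preserves $\cP_B$, so $\Aut(\cP_G)=\Aut(\cP_B)$. By (i), $\cP_B=\cP_T\times^T B$, and $\Aut(\cP_T)=T(k)$ because $X$ is projective and connected. This gives
\[
\Aut(\cP_B)=T(k)\ltimes H^0(X,\cP_T\times^TU),
\]
where $H^0(X,\cP_T\times^TU)$ is the group of global sections of the group scheme $\cP_T\times^TU$. The torus factor restricts to each point of $D$ as constant elements of $T(k)$, via the $\cP_B$-compatible trivialization $\psi$.

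\textbf{Surjectivity of the unipotent part.} This is the main analytic step. We claim that the restriction $H^0(X,\cP_T\times^TU)\to U(\cO_D)$ is surjective. For each graded piece it suffices to have $H^1(X,\cL_\alpha(-D))=0$, which holds because $\deg\cL_\alpha-\deg D>2g-2$ by the $D$-cusp hypothesis. We then pass from the graded pieces to $U$ by induction on the central filtration, using the exact sequences of sheaves of groups and the standard five-lemma argument for nonabelian $H^0$. We expect this induction, with its nonabelian bookkeeping, to be the main technical obstacle.

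\textbf{Imposing the level structure over $D_2$.} An automorphism $t\cdot u$, with $t\in T(k)$ and $u$ a global unipotent section, preserves $\phi_{D_2}$ exactly when, for each $y\in\supp D_2$,
\[
t\,u_y\in\tau_yH^y(\cO_{d_y[y]})\tau_y^{-1},
\]
where $u_y$ is the restriction of $u$ to $d_y[y]$. Write $u=(u_{D_1},u_{D_2})$. Since $\supp D_1\cap\supp D_2=\varnothing$, the surjectivity onto $U(\cO_D)=U(\cO_{D_1})\times U(\cO_{D_2})$ lets us choose $u_{D_1}$ and $u_{D_2}$ independently. For fixed $t$, a suitable $u_{D_2}$ exists if and only if $t\in\tau_yH^y\tau_y^{-1}U(\cO_{d_y[y]})$ for every $y\in\supp D_2$, that is, if and only if $t\in T_{D_2}(\tau)$. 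The component $u_{D_1}$ is then free in $U(\cO_{D_1})$. Hence the image on $D_1$ is $T_{D_2}(\tau)\ltimes U(\cO_{D_1})$. When $D_2=0$ there are no constraints and the image is $T(k)\ltimes U(\cO_{D_1})$.

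Finally, we must check that this image is a subgroup and that the product is semidirect. Both follow because $T(k)$ normalizes $U(\cO_{D_1})$.
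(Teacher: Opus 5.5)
Your proposal is correct and follows the paper's route. Part (i), and the fact that $\Aut(\cP_G)$ restricts onto $T(k)\ltimes U(\cO_D)$, are exactly what the paper imports from \cite{KZ26}; you reprove them via the height filtration and the vanishing of $H^1(X,\cL_\alpha)$ and $H^1(X,\cL_\alpha(-D))$, and your level-structure analysis at $D_2$, with $u_{D_1}$ and $u_{D_2}$ chosen independently, is the paper's argument.
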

\begin{proof}

    \textit{(i).} The proof that $\mathcal{P}_B \simeq \mathcal{P}_T \times^T B$ relies entirely on the fact that $\mathcal{P}_G$ is $D$-cusp. Because this condition forces $\deg(\mathcal{L}_\alpha) > 2g - 2$ for all positive roots $\alpha$, the non-abelian cohomology group $H^1_{\mathrm{fppf}}(X, U_{\mathcal{P}_T})$ is trivial. This geometric derivation is identical to the principal ramification case; see the exact height filtration argument in \cite[Theorem 5.8(i)]{KZ26}.\\

    \textit{(ii).} 
We know from \cite[Theorem 5.8(ii)]{KZ26} that
\[
\operatorname{Im}\bigl(\Aut(\mathcal P_G)\to \Aut(\mathcal P_G|_D)\simeq G(\mathcal O_D)\bigr)
=
T(k)\ltimes U(\mathcal O_D),
\]
which, in particular, proves the $D_2=0$ case. From now, we will assume that $D_2\ne 0$.

We first analyze the image $A$ of $\Aut(\cP_G,\phi)$ in $G(\cO_D)$. Using \(D=D_1\sqcup D_2\), we write every element in  this image in a form
\[
(tu_1,tu_2),
\qquad t\in T(k),\quad u_1\in U(\mathcal O_{D_1}),\quad u_2\in U(\cO_{D_2}).
\]\
The component \(tu_2\) must fix the subbundle \(\tau_y H^y\), which happens if and only if
\[
(tu_2)\tau_y H^y(\cO_{d_y[y]})=\tau_y H^y(\cO_{d_y[y]}),
\]
equivalently,
\[
t\in \tau_y H^y(\cO_{d_y[y]})\tau_y^{-1}u_2^{-1}.
\]

Intersecting over all $y\in\supp D_2$, we obtain that $t\in T_{D_2}(\tau)$. Since the $\im\mathrm{res}_{D_1}$ equals the image of $A$ under the projection map
$$
G(\cO_D)\to G(\cO_{D_1}),
$$
we have
$$
\im\mathrm{res}_{D_1}\subset T_{D_2}(\tau)\ltimes U(\cO_{D_1}).
$$
On the other hand, take an element $tu_1\in T_{D_2}(\tau)\ltimes U(\cO_{D_1})$. Running the argument backwards, we find an element $u_2\in U(\cO_{D_2})$ such that
$
(tu_1,tu_2)\in A.
$
Then $tu_1\in \im\mathrm{res}_{D_1}$, which finishes the proof.
\end{proof}

\begin{corollary}\label{cor:fiber_size_reductive}
With the notation and assumptions of Theorem~\ref{t:automorphisms_reductive_general}, there is a bijection
\begin{align*}
p_{D,D_2}^{-1}(\mathcal P_G,\phi_{D_2})
&\cong
\cH(\cO_{D_1})\backslash G(\cO_{D_1})/T_{D_2}(\tau)U(\cO_{D_1})
\\
&\cong \left(
\cH(\cO_{D_1})\backslash G(\cO_{D_1})/U(\cO_{D_1})
\right)/T_{D_2}(\tau)
\end{align*}

\end{corollary}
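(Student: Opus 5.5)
The plan is to identify the fiber as a set of level structures over $D_1$ modulo automorphisms, using Remark~\ref{rem:coset_conventions}, and then to substitute the image computed in Theorem~\ref{t:automorphisms_reductive_general}.

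\emph{Step 1: the fiber as a double quotient.} A point of $p_{D,D_2}^{-1}(\cP_G,\phi_{D_2})$ is an isomorphism class of triples $(\cP_G,\phi_{D_2},\phi_{D_1})$. Here $\phi_{D_1}$ is an $\cH$-level structure over $D_1$, and two triples are identified via automorphisms of $\cP_G$. Such an automorphism must carry $\phi_{D_2}$ to itself, since we are inside the fiber over the fixed class $(\cP_G,\phi_{D_2})$. Any isomorphism between two triples lying over the same class can be composed with the given identification over $D_2$. So the fiber is the set of $\cH$-level structures over $D_1$ modulo the action of $\Aut(\cP_G,\phi_{D_2})$. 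Since $\supp D_1\cap\supp D_2=\varnothing$, the $D_1$-structures are independent of the $D_2$-structure.

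\emph{Step 2: passing to a trivialization.} Exactly as in Remark~\ref{rem:coset_conventions}, applied to the divisor $D_1$ with the $\cP_B$-compatible trivialization $\psi|_{D_1}$, level structures over $D_1$ are identified with $\cH(\cO_{D_1})\backslash G(\cO_{D_1})$. Under this identification an automorphism acts by right multiplication by its image in $G(\cO_{D_1})$. Hence the fiber is $\cH(\cO_{D_1})\backslash G(\cO_{D_1})/A_1$, where $A_1$ is the image of $\mathrm{res}_{D_1}$. Only the image matters, because the action factors through restriction to $D_1$. By Theorem~\ref{t:automorphisms_reductive_general}(ii), $A_1=T_{D_2}(\tau)\ltimes U(\cO_{D_1})$, or $T(k)\ltimes U(\cO_{D_1})$ when $D_2=0$. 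With the convention $T_0=T(k)$, this gives the first bijection.

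\emph{Step 3: the second description.} Here $T_{D_2}(\tau)\subset T(k)$ normalizes $U(\cO_{D_1})$, so $T_{D_2}(\tau)U(\cO_{D_1})$ is a group containing $U(\cO_{D_1})$ as a normal subgroup. The quotient by it can therefore be taken in two stages: first by $U(\cO_{D_1})$ on the right, then by the induced right action of $T_{D_2}(\tau)$ on $\cH(\cO_{D_1})\backslash G(\cO_{D_1})/U(\cO_{D_1})$. The induced action $gU\mapsto gtU=gUt$ is well defined because $tU=Ut$, and it commutes with the left $\cH$-action.

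The main point needing care is Step 1. One must check that isomorphisms of triples over the fixed $(\cP_G,\phi_{D_2})$ correspond exactly to elements of $\Aut(\cP_G,\phi_{D_2})$, so that no larger group acts. This holds by choosing representatives with the same underlying $(\cP_G,\phi_{D_2})$ and composing with the given identification. Everything else is formal.
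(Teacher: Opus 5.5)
Your proposal is correct and follows the paper's proof: the paper likewise identifies the fiber with $\cH(\cO_{D_1})\backslash G(\cO_{D_1})/\Aut(\cP_G,\phi_{D_2})$ via Remark~\ref{rem:coset_conventions}, and then substitutes the image $T_{D_2}(\tau)\ltimes U(\cO_{D_1})$ given by Theorem~\ref{t:automorphisms_reductive_general}(ii). Your Steps 1 and 3 make explicit what the paper leaves implicit: that isomorphisms of triples over the fixed $(\cP_G,\phi_{D_2})$ reduce to $\Aut(\cP_G,\phi_{D_2})$, and that the second bijection follows by quotienting in stages, since $T(k)$ normalizes $U(\cO_{D_1})$.
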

\begin{proof}
    The $\cH$-level structures on $\cP_G$ over $D$ restricting to $\phi_{D_2}$ over $D_2$ are parametrized by
    $$
    \cH(\cO_{D_1})\backslash G(\cO_{D_1})/\Aut(\cP_G,\phi_{D_2}).
    $$
By Theorem~\ref{t:automorphisms_reductive_general}, $\Aut(\cP_G,\phi_{D_2})$ acts by right multiplication by its image which equals
\[
T_{D_2}(\tau)\ltimes U(\mathcal O_{D_1}).
\]
This finishes the proof.
\end{proof}

What we just proved shows that the fibers of the map $p_{D,D_2}$ can have different cardinalities, which never happened for the case $H^y=1$ studied in \cite{KZ26}. We introduce the following definition aimed to fix this peculiarity.

\begin{definition}\label{def:regular_datum}
    We call the ramification datum $(D,\cH)$ {\bfseries regular} if the subgroup $T_D(\tau)$ does not depend on the choice of $\tau\in G(\cO_D)$. In this case, we denote $T_{D}(\tau)$ simply by $T_D$. If we want to emphasize the divisor $D$, we call such a ramification datum {\bfseries regular at $D$}.
\end{definition}
\begin{remark}
    {\rm
        We note that $T_D(\tau)$ depends only on the class
        $$
        \tau_y\in B(\cO_{d_y[y]})\backslash G(\cO_{d_y[y]})/H^y(\cO_{d_y[y]}).
        $$
        Indeed, the independence on the right multiplication by $H^y$ is clear. For the left multiplication by $B$, choose $tu\in B(\cO_{d_y[y]})$ with $t\in T(\cO_{d_y[y]})$ and $u\in U(\cO_{d_y[y]})$. Since $T$ normalizes $U$, we have
        \begin{align*}
            T(k)\cap tu\tau H^y \tau^{-1} u^{-1}t^{-1}U&=T(k)\cap tu\tau H^y \tau^{-1} u^{-1}Ut^{-1}\\
            &=T(k)\cap u\tau H^y \tau^{-1} U.
        \end{align*}
        Take an element $t\in T(k)\cap u\tau H^y \tau^{-1} U$. Then 
        $$
        u^{-1}t\in \tau H^y \tau^{-1} U.
        $$
        Since $T$ normalizes $U$, there exists $u'\in U(\cO_{d_y[y]})$ such that $u^{-1}t=tu'$. Then 
        $$
        tu'\in \tau H^y \tau^{-1} U,
        $$
        which implies that $t\in \tau H^y \tau^{-1} U$. This shows the inclusion
        $$
    T(k)\cap tu\tau H^y \tau^{-1} u^{-1}t^{-1}U\subset T(k)\cap \tau H^y \tau^{-1} U,
        $$
        and the reverse inclusion is proven by reverting the argument.
    }
\end{remark}

The next proposition shows that any ramification with at least one unipotent subgroup is regular with $T_D=1$.
 \begin{proposition}\label{p:unipotent_implies_regular}
     If at least one $H^y$ is unipotent, then the corresponding ramification datum is regular.
 \end{proposition}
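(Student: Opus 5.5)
The plan is to show directly that $T_D(\tau)=\{1\}$ for every $\tau\in G(\cO_D)$, which gives independence of $\tau$ and hence regularity with $T_D=1$. Since $T_D(\tau)$ is an intersection over $y\in\supp D$ that also includes $T(k)$, it suffices to fix one point $y_0$ with $H^{y_0}$ unipotent and show
$$
T(k)\cap \tau_{y_0}H^{y_0}(\cO_{d_{y_0}[y_0]})\tau_{y_0}^{-1}U(\cO_{d_{y_0}[y_0]})=\{1\}
$$
for every $\tau_{y_0}$. The remaining factors in the intersection can only shrink this set further.

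So let $t\in T(k)$, and suppose $t=\tau h\tau^{-1}u$ with $h\in H^{y_0}(\cO_{d[y_0]})$ and $u\in U(\cO_{d[y_0]})$, where we write $d=d_{y_0}$ and $\tau=\tau_{y_0}$. Then $tu^{-1}=\tau h\tau^{-1}$ is a unipotent element of $G(\cO_{d[y_0]})$, being conjugate to a unipotent one. The plan is to reduce modulo $\pi_{y_0}$, that is, to pass to the residue field $k_{y_0}$. The reduction of $tu^{-1}$ is $t\bar u^{-1}\in B(k_{y_0})$ and it is unipotent. Its image in $B/U\cong T$ is then a unipotent element of a torus, hence trivial. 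That image is $t$ itself, because $t$ is a constant section. Therefore $t=1$.

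The one point needing care, and the main obstacle, is making "unipotent" precise over the Artinian ring $\cO_{d[y_0]}$. The cleanest route is to avoid this entirely. Apply the reduction homomorphism $G(\cO_{d[y_0]})\to G(k_{y_0})$ first. It sends $H^{y_0}(\cO_{d[y_0]})$ into $H^{y_0}(k_{y_0})$, which consists of unipotent elements because $H^{y_0}$ is a unipotent algebraic group. It also sends $U(\cO_{d[y_0]})$ into $U(k_{y_0})$, and it fixes $T(k)$. Over a field, conjugation preserves unipotency, and a unipotent element of $B$ maps to a unipotent element of $B/U=T$, that is, to $1$. This uses only the standard Jordan decomposition facts over the field $k_{y_0}$.

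Having $T_D(\tau)=1$ for all $\tau$, the datum is regular by Definition \ref{def:regular_datum}.
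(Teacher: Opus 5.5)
Your proof is correct and follows essentially the same route as the paper. You reduce modulo $\pi_{y}$ at a point $y$ with unipotent $H^{y}$, which can only enlarge $T_D(\tau)$, just as shrinking the divisor to that point can. You then note that $t\bar u^{-1}$ is a unipotent element of $B(k_y)$, so its image in $B/U\cong T$ is trivial; this is the paper's observation that unipotent elements of $B(k_y)$ lie in $U(k_y)$, and it forces $t=1$.
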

\begin{proof}
    Choose $y$ such that $H^y$ is unipotent. Note that reduction modulo $\pi_y$ or deleting points from $D$ can only increase $T_D(\tau)$, so it is enough to check the statement for the divisor $[y]$. Take $t\in T_D(\tau)$. Write
    $$
    t=hu,\qquad h\in \tau H^y(k_y)\tau^{-1}, u\in U(k_y).
    $$
    Then $h=u^{-1}t\in B(k_y)$. Since $h$ is unipotent and the only unipotent elements in $B(k_y)$ are in $U(k_y)$, we must have $h\in U(k_y)$, which implies that $t\in U(k_y)$. Since $t\in T(k)$, we conclude that $t=1$, as desired.
\end{proof}

\begin{definition}\label{def:covering_map}
    Let $\Gamma_1$ and $\Gamma_2$ be two oriented graphs such that every vertex has finite ingoing and outgoing degree. A {\bfseries covering map} $p:\Gamma_1\to\Gamma_2$ of graphs is a map on vertices such that
    \begin{itemize}
        \item $p$ is surjective on vertices.
        \item For each $v\in\Gamma_1$ and $w\in\Gamma_2$, $p$ induces a bijection between edges $p(v)\to w$ and $v\to p^{-1}(w)$, where we take the union of edges over all preimages of $w$.
        \item For each $v\in\Gamma_1$ and $w\in\Gamma_2$, $p$ induces a bijection between edges $p^{-1}(w)\to v$ and $w\to v$, where we take the union of edges over all preimages of $w$.
    \end{itemize}
\end{definition}

If we think of oriented graphs as one-dimensional CW complexes, then a covering map is a topological covering map preserving the CW structure and orientations of one-dimensional cells. See Figure 1 in \cite{KZ26} for a visualization. In particular, such a map is completely determined by $\Gamma_2$, the set of vertices of $\Gamma_1$, and the way topological loops lift.

Fix a ramification datum $(D,\cH)$ and let $\Gamma_{D}:=\Gamma_{D,x}^{\mu}$ and $\Gamma_{D_2}:=\Gamma_{D_2,x}^{\mu}$. Let 
$$
\Gamma_{D}^{>k}:=\{(\cP_G,\phi)\in \Gamma_{D}^{\mathrm{cusp}}:\forall_{\alpha\in \Phi^+,w\in W}\,\deg\cL_\alpha>2g-2+k+\deg(D)+\langle \alpha,w(\mu)\rangle\,\deg(x)\},
$$
where $\Gamma_{D}^{\mathrm{cusp}}$ denotes the $G$-bundles whose HN reduction is to $B$. Recall that we fixed a decomposition $D=D_1+D_2$ satisfying (\ref{eq:d1_d2_condition}).
\begin{theorem}\label{t:covering_map_GL}
    Assume that the ramification datum at $D$ is regular over $D_2$ in the sense of Definition \ref{def:regular_datum}. Then the map $p_{D,D_2}:\Gamma_{D}^{>k}\to \Gamma_{D_2}^{>k}$ is a covering map of graphs for $k\ge \deg D$.
\end{theorem}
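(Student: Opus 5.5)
We verify the three conditions of Definition \ref{def:covering_map} for $p=p_{D,D_2}$ restricted to $\Gamma_D^{>k}\to\Gamma_{D_2}^{>k}$. Surjectivity on vertices is immediate: any $\cH$-level structure over $D_2$ extends to one over $D$, for instance by choosing the trivial coset at $D_1$. The degree condition defining $\Gamma^{>k}$ depends only on the underlying bundle, so preimages of points of $\Gamma_{D_2}^{>k}$ lie in $\Gamma_D^{>k}$. The main work is therefore the bijection of edges. We prove the outgoing statement; the incoming statement is symmetric, either by running the same argument with the dual Hecke operator $\Phi^{-w_0\mu}$, or by reversing roles via the inverse modification $\beta^{-1}$.

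Fix $v=(\cP,\phi_D)\in\Gamma_D^{>k}$ and $w=(\cP',\phi'_{D_2})$. By Proposition \ref{t:general_hecke_correspondence}, an edge $p(v)\to w$ is a class of pairs $(\cP',\beta)$ with $\beta:\cP|_{X\setminus x}\simeq\cP'|_{X\setminus x}$ satisfying the local condition at $x$ and compatibility along $D_2'$. Since $x\notin\supp D_1$, $\beta$ is an isomorphism over $D_1$. Hence $\beta$ transports $\phi_{D_1}$ to a unique level structure $\beta_*\phi_{D_1}$ on $\cP'$ over $D_1$, so each such pair gives an edge from $v$ to $(\cP',\phi'_{D_2},\beta_*\phi_{D_1})\in p^{-1}(w)$. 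Conversely, an edge from $v$ into $p^{-1}(w)$ forgets to an edge $p(v)\to w$. These two maps are mutually inverse at the level of pairs $(\cP',\beta)$. What remains is to check that they respect the equivalence relations, which are isomorphisms of targets.

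This is the main obstacle. An edge $p(v)\to w$ is a pair $(\cP',\beta)$ modulo $\Aut(\cP',\phi'_{D_2})$ acting by post-composition. An edge into $p^{-1}(w)$ is modulo isomorphisms of targets with full level structure over $D$. We must show that two pairs $\beta_1,\beta_2=a\beta_1$, with $a\in\Aut(\cP',\phi'_{D_2})$, which yield isomorphic targets $(\cP',\phi'_{D_2},\beta_{i*}\phi_{D_1})$, are in fact related by some $a'\in\Aut(\cP',\phi'_D)$. Equivalently, distinct edges downstairs must not collapse upstairs, and vice versa. Here we use Theorem \ref{t:automorphisms_reductive_general} and Corollary \ref{cor:fiber_size_reductive}. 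Both $\cP$ and $\cP'$ are $D$-cusp: this uses $k\ge\deg D$ together with the estimate $\deg\cL'_\alpha\ge\deg\cL_\alpha-\max_w\langle\alpha,w\mu\rangle\deg x$. It also gives that $\cP'$ has HN reduction to $B$ compatible with $\beta$ deep in the cusp, which I expect to follow from \cite{KZ26} exactly as in the principal case. Consequently the images of $\Aut(\cP',\phi'_{D_2})$ and $\Aut(\cP,\phi_{D_2})$ in $G(\cO_{D_1})$ are $T_{D_2}(\tau')U(\cO_{D_1})$ and $T_{D_2}(\tau)U(\cO_{D_1})$, respectively.

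Regularity is used here: $T_{D_2}(\tau)=T_{D_2}(\tau')=T_{D_2}$, so these groups coincide under the identification of $\cP|_{D_1}$ with $\cP'|_{D_1}$ via $\beta$, which preserves the $B$-reductions. The stabilizer of $v$ in $\Aut(\cP,\phi_{D_2})$ acting on $p^{-1}(p(v))$ therefore corresponds exactly, via $\beta$, to the stabilizer data on the target side. A counting argument then finishes the proof. The number of edges $v\to p^{-1}(w)$, summed over $v\in p^{-1}(p(v))$, equals $|p^{-1}(w)|$ times the number of edges $p(v)\to w$. This holds because both sides count orbits of the same free-modulo-stabilizer action. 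Since the fibers all have the same size $|\cH(\cO_{D_1})\backslash G(\cO_{D_1})/T_{D_2}U(\cO_{D_1})|$ by regularity, and the map is already injective by the transport argument, it is a bijection for each individual $v$.
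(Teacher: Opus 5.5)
\textbf{The gap is the claim that the incoming condition is symmetric to the outgoing one.} By Proposition \ref{t:general_hecke_correspondence}, an edge $u\to v$ is a modification of a fixed representative of the \emph{source}, counted modulo automorphisms of the \emph{target}. Replacing $\beta$ by $\beta^{-1}$ gives modifications of $v$ counted modulo $\Aut(v)$. Edges $v\to u$ of $\Gamma^{-w_0\mu}$, however, are counted modulo $\Aut(u)$, so the multiplicities differ by the factor $|\Aut(u)|/|\Aut(v)|$. Already for unramified $\PGL_2$ on $\PP^1$, where $-w_0\mu\equiv\mu$, the trivial bundle has $q+1$ edges to $\cO(1)\oplus\cO$ but receives only $q$ from it. In your setting the relevant groups are the stabilizers of the various $D_1$-structures inside $\Aut(\cP,\phi_{D_2})$, and these vary along the fiber.

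The two conditions are genuinely asymmetric. The outgoing one is formal. Since $x\notin\supp D_1$, the fiber $p^{-1}(w)$ is the set of orbits of $D_1$-structures on $\cP'$ under $\Aut(\cP',\phi'_{D_2})$. This is exactly the group by which edges $p(v)\to w$ are quotiented. So your transport map $\beta\mapsto\beta_*\phi_{D_1}$ is already a bijection, with no cusp or regularity input; the paper simply cites Proposition \ref{p:change_of_ramification}. Your use of Theorem \ref{t:automorphisms_reductive_general} and regularity at that point is unnecessary. Moreover, the outgoing condition holds formally for every coweight. So a valid symmetry reduction would give the incoming condition for \emph{every} ramification datum with $x\notin\supp D_1$. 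That contradicts the paper's example with $H^x=T$, $H^y=U$ on $\PP^1$, where the outgoing condition holds but different lifts of a vertex receive very different numbers of edges.

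\textbf{What is missing is a direct argument on incoming edges, and that is where your ingredients belong.} First show that at most as many edges go from $p^{-1}(w)$ into $v$ as from $w$ to $p(v)$. Suppose edges from two distinct fiber points $(\cP,\phi_{D_2},\psi^1_{D_1})$ and $(\cP,\phi_{D_2},\psi^2_{D_1})$ into $v$ came from the same modification $f$ (and the same coset representative at $x$). Then there is an isomorphism $\alpha$ between the two resulting targets. In $B$-compatible trivializations, $\alpha|_{D_1}$ lies in $T_{D_2}U(\cO_{D_1})$: apply Theorem \ref{t:automorphisms_reductive_general}(ii) to the target, with regularity making the torus part independent of the target's $\tau$. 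This subgroup is normalized by $f|_{D_1}\in B(\cO_{D_1})$. Hence $(f^{-1}\alpha f)|_{D_1}$ lies in it, and by the same theorem applied to the source it lifts to an automorphism of $(\cP,\phi_{D_2})$. That automorphism identifies the two sources, a contradiction.

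Next, count. By the outgoing condition, the number of edges from $p^{-1}(w)$ to $p^{-1}(p(v))$ is exactly $|p^{-1}(w)|$ times the number of edges $w\to p(v)$. By the claim, it is at most $|p^{-1}(p(v))|$ times that number. Regularity and Corollary \ref{cor:fiber_size_reductive} make the two fiber sizes equal, which forces equality at each $v$.

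Your final paragraph has the right shape (injectivity plus counting with equal fibers), but it is applied to the wrong direction. For incoming edges, injectivity is not a transport statement; it is precisely this lifting of $f^{-1}\alpha f$. Also, your count there should read $|p^{-1}(p(v))|$, not $|p^{-1}(w)|$.
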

\begin{proof}
    Surjectivity on points is clear. Since $x\notin\supp D_1$, the assumption of Proposition \ref{p:change_of_ramification} is satisfied, and hence the second property of a covering map is satisfied by $p_{D,D_2}$.
    
    For the third property, we mimic the strategy of the proof of \cite[Lemma 4.15]{KZ26}. Take $v\in\Gamma_{D}^{>k}$ and $w\in\Gamma_{D_2}^{>k}$, and set $p:=p_{D,D_2}$, $v':=p(v)$.
    We claim that the number of edges from  $p^{-1}(w)$ to $v$ is at most the number of edges from $w$ to $v'$. Let us show how this claim proves the third property of a covering map. By the second property just proven, the total number of edges from $p^{-1}(w)$ to $p^{-1}(v')$ equals $|p^{-1}(w)|$ times the number of edges from $w$ to $v'$. On the other hand, by our claim, the number of edges from $p^{-1}(w)$ to $p^{-1}(v')$ is at most $|p^{-1}(v')|$ times the number of edges from $w$ to $v'$, with equality if and only if the third property is satisfied. But equality must be satisfied since $|p^{-1}(v')|=|p^{-1}(w)|$. So, we are done as soon as we prove the claim.

    Suppose the contrary: that there are more edges from $p^{-1}(w)$ to $v$ than from $w$ to $v'$. Choose representatives in each isomorphism class of level structures for $w=(\cP_G,\phi_{D_2})$, $v=(\cP_G',\phi_{D_2}',\phi_{D_1}')$, and $(\cP_G,\phi_{D_2},\psi_{D_1})$ for any element in $p^{-1}(w)$. By Proposition \ref{t:general_hecke_correspondence}, edges from each $(\cP_G,\phi_{D_2},\psi_{D_1})$ to $(\tilde\cP_G,\tilde\phi_{D_2},\tilde\psi_{D_1})$ are given by pairs $(f,\tau)$, where $f:\tilde \cP_G|_{X\setminus x}\simeq\cP_G|_{X\setminus x}$ is an isomorphism over $X\setminus x$ and $\tau$ is one of the chosen set of representatives $\tau_i$ of $K\Delta K=\sqcup \tau_i K$ ($\tau$ is redundant if $x\notin\supp D$). By our assumption, there exist at least two edges from $p^{-1}(w)$ to $v$ given by the same pair $(f,\tau)$. Write those edges as
    \begin{align*}
        (\cP_G,\phi_{D_2},\psi_{D_1}^1)&\to (\tilde \cP_G,\tilde\phi_{D_2},\psi_{D_1}^1\circ f),\\
        (\cP_G,\phi_{D_2},\psi_{D_1}^2)&\to (\tilde \cP_G,\tilde\phi_{D_2},\psi_{D_1}^2\circ f),
    \end{align*}
    where the first two elements in the triples on the right-hand side are the same since we use the same $(f,\tau)$ for both connections. By assumption, the level structures on the right represent the same vertex $v$, hence there exists an isomorphism
    $$
    \alpha: (\tilde \cP_G,\tilde\phi_{D_2},\psi_{D_1}^1\circ f)\simeq (\tilde \cP_G,\tilde\phi_{D_2},\psi_{D_1}^2\circ f).
    $$
    
    Choose $B$-compatible trivializations of $\cP_G$ and $\tilde \cP_G$ over $D$. Since both $f$ and $\alpha$ preserve the HN reductions $\cP_B$ and $\tilde \cP_B$, we get
    $$
    f|_{D_1}\in B(\cO_{D_1}),\quad \alpha|_{D_1}\in T(k)U(\cO_{D_1}). 
    $$
    Moreover, if $D_2\ne 0$, then $\alpha|_D\in T_{D_2}(\tau)U(\cO_{D_1})$ by Theorem \ref{t:automorphisms_reductive_general}(ii). Then
    $$
    (f^{-1}\alpha f)|_{D_1}\in
    \begin{cases}
        T(k)U(\cO_{D_1}),&D_2=0,\\
         T_{D_2}(\tau)U(\cO_{D_1}),&D_2\ne 0,
    \end{cases}
    $$
    under a $B$-compatible trivialization of $\cP_G|_{D_1}$. By the assumption that the ramification datum is regular over $D_2$ and Theorem \ref{t:automorphisms_reductive_general}(ii), this is precisely the subgroups of maps that lift to an automorphism of $(\cP_G,\phi_{D_2})$. Such a lift of $f^{-1}\alpha f$ gives an isomorphism between $(\cP_G,\phi_{D_2},\psi_{D_1}^1)$ and $(\cP_G,\phi_{D_2},\psi_{D_1}^2)$. This is a contradiction since we chose one representative from each isomorphism class. This finishes the proof.
\end{proof}

For our main result, we introduce a $T(k)$-action on level structures. Let 
$$
(\cP_G,\phi)=(\cP_G,\phi|_{D_1},\phi|_{D_2})
$$
be an element in the cusp of $\Bun_{G,D}^\cH(k)$, where $\phi$ is considered a trivialization of $\cP_G$ over $D$ as in Remark \ref{rem:coset_conventions}. Choose a $\cP_B$-compatible reference trivialization of $\cP_G$. By Corollary \ref{cor:fiber_size_reductive}, we can treat $(\phi|_{D_1},\phi|_{D_2})$ as an element 
$$
(g_1,g_2)\in \big[\cH(\cO_{D_1})\backslash G(\cO_{D_1})/U(\cO_{D_1})\times \cH(\cO_{D_2})\backslash G(\cO_{D_2})/U(\cO_{D_2})\big]/T_{D}(\tau).
$$

Using this interpretation, we define a $T(k)$-action on the cusp of $\Bun_{G,D}^\cH$ by 
\begin{equation}\label{eq:T-action}
    t.(\cP_G,\phi):=(\cP_G,\phi|_{D_1}\cdot t,\phi|_{D_2}),\quad t\in T(k).
\end{equation}

\begin{lemma}
    The action (\ref{eq:T-action}) is well-defined and does not depend on the chosen reference trivialization. Moreover, the stabilizer of $(\cP_G,\phi)$ equals $T_{D_2}(\tau)$.
\end{lemma}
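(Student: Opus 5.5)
The plan is to work entirely in the double-coset model of Remark~\ref{rem:coset_conventions}, using the description of automorphisms from Theorem~\ref{t:automorphisms_reductive_general}. Fix a $\cP_B$-compatible reference trivialization $\psi$ of $\cP_G|_D$. An $\cH$-level structure is then a class of trivializations $\phi=g\psi$ with $g=(g_1,g_2)\in G(\cO_{D_1})\times G(\cO_{D_2})$. The class of $g$ is taken in
$$
\cH(\cO_D)\backslash G(\cO_D)/A,\qquad A=\im\bigl(\Aut(\cP_G)\to G(\cO_D)\bigr).
$$
By \cite[Theorem 5.8(ii)]{KZ26}, $A$ consists of the elements $(tu_1,tu_2)$ with $t\in T(k)$ diagonal and $u_i\in U(\cO_{D_i})$. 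The action (\ref{eq:T-action}) is $(g_1,g_2)\mapsto(g_1t,g_2)$, where $t\in T(k)$ is viewed as a constant section over $D_1$ only.

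\emph{Well-definedness.} I check that right multiplication by $(t,1)$ commutes with both quotients.

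1. Left multiplication by $\cH(\cO_D)$ obviously commutes with it.

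2. For $a=(t'u_1,t'u_2)\in A$ we have
$$
(g_1t'u_1t,\;g_2t'u_2)=(g_1t\,t'\,u_1',\;g_2t'u_2),\qquad u_1':=t^{-1}u_1t\in U(\cO_{D_1}).
$$
This uses that $T$ is abelian and normalizes $U$. The right-hand side is $(g_1t,g_2)\cdot a'$ with $a'=(t'u_1',t'u_2)\in A$.

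So the action descends to the set of isomorphism classes.

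\emph{Independence of the reference trivialization.} A second $\cP_B$-compatible trivialization has the form $\psi'=b\psi$ with $b=(b_1,b_2)\in B(\cO_D)$. The same level structure is $\phi=gb^{-1}\psi'$. Acting by $t$ in the $\psi'$-picture gives
$$
g_1b_1^{-1}t\,b_1\psi=g_1\,t\,(t^{-1}b_1^{-1}tb_1)\psi.
$$
Writing $b_1=su$ with $s\in T(\cO_{D_1})$ and $u\in U(\cO_{D_1})$, commutativity of $T$ gives $t^{-1}b_1^{-1}tb_1=t^{-1}u^{-1}tu\in U(\cO_{D_1})$. This error term lies in the $D_1$-component $U(\cO_{D_1})\times 1\subset A$, so it is absorbed by the quotient. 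Hence both trivializations define the same action.

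\emph{Stabilizer.} By Corollary~\ref{cor:fiber_size_reductive}, the fiber of $p_{D,D_2}$ through $(\cP_G,\phi)$ is $\cH(\cO_{D_1})\backslash G(\cO_{D_1})/T_{D_2}(\tau)U(\cO_{D_1})$, with $\tau$ determined by $g_2$. The $T(k)$-action preserves this fiber and acts there by right multiplication on the $D_1$-coordinate. The inclusion $T_{D_2}(\tau)\subset\mathrm{Stab}$ is immediate: for $t\in T_{D_2}(\tau)$, $g_1t$ and $g_1$ have the same class by definition of the quotient. For the reverse inclusion, $t$ stabilizes the point iff
$$
t\in g_1^{-1}\cH(\cO_{D_1})g_1\,T_{D_2}(\tau)\,U(\cO_{D_1}).
$$
I would then intersect with $T(k)$ and argue as in the proof of Proposition~\ref{p:unipotent_implies_regular}: project to $T$ through $B\to T$ and use that the $\cH$-part contributes only unipotent or trivial elements, forcing $t\in T_{D_2}(\tau)$.

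This last step is the main obstacle. When the $H^y$ for $y\in\supp D_1$ are trivial, or unipotent in the relevant position, the argument works cleanly. In general, however, $g_1^{-1}H^y g_1$ may contain torus elements; for instance $H^y\supset T$ with $g_1=1$. In that case one must interpret the stabilizer as that of the action on the fiber modulo the $\cH$-part, or add the hypothesis on $\cH|_{D_1}$ that makes the statement hold. I would first try the trivial-$H^y$ case and then isolate exactly which condition on $H^y$, $y\in\supp D_1$, is needed.
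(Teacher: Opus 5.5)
\textbf{What is correct.} Your proofs of well-definedness, of independence from the reference trivialization, and of the inclusion $T_{D_2}(\tau)\subseteq\operatorname{Stab}$ are correct and follow essentially the paper's argument. The key points are that $T$ is abelian and normalizes $U$, and that after changing the trivialization by $b_1=su$, the discrepancy $t^{-1}b_1^{-1}tb_1$ lies in $U(\cO_{D_1})\times 1\subset A$.

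\textbf{The stabilizer.} Your doubt about the reverse inclusion is justified. It is false in general, so no argument can close that step without an extra hypothesis.

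Your own example is a genuine counterexample as soon as $T_{D_2}(\tau)\neq T(k)$. Take $q>2$ and $D_2=[z]$ with $H^z=U$, so that $T_{D_2}(\tau)=1$ by Proposition \ref{p:unipotent_implies_regular}. Take $D_1=[y]$ with $H^y\supseteq T$. At the point with $g_1=1$, every $t\in T(k)$ lies in $H^y(\cO_{d_y[y]})$ and is absorbed on the left, so the stabilizer is all of $T(k)$.

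The paper's proof reaches equality by translating $t.(\cP_G,\phi)\cong(\cP_G,\phi)$ into the existence of $\alpha\in\Aut(\cP_G,\phi_{D_2})$ with $\phi|_{D_1}\circ\alpha=\phi|_{D_1}t$ on the nose. The correct condition is only $\phi|_{D_1}\circ\alpha\in\cH(\cO_{D_1})\,\phi|_{D_1}t$. The argument is therefore valid only when the $D_1$-part contributes no torus elements, for example $H^y=1$ (the principal case).

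Of your two suggested repairs, only adding a hypothesis on $\cH|_{D_1}$ works. The fiber is already taken modulo $\cH(\cO_{D_1})$; the paper's claim is really about the action on classes of trivializations over $D_1$.

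\textbf{Finishing your computation.} Your condition says that $t$ is in the stabilizer if and only if
\[
tt'^{-1}\in T(k)\cap g_1^{-1}\cH(\cO_{D_1})g_1\,U(\cO_{D_1})\quad\text{for some } t'\in T_{D_2}(\tau).
\]
For a subgroup $S$ one has $T(k)\cap SU=T(k)\cap\mathrm{pr}_T(S\cap B)$, where $\mathrm{pr}_T\colon B\to T$ is the projection. Hence both factors are subgroups of the abelian group $T(k)$, and
\[
\operatorname{Stab}(\cP_G,\phi)=T_{D_2}(\tau)\cdot T_{D_1}(\tau_{D_1}),
\]
with $T_{D_1}$ as in Definition \ref{def:huge_torus_intersection}. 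Here $\tau_{D_1}=g_1^{-1}$ describes the $D_1$-level structure in the same way $\tau$ describes the $D_2$-level structure.

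So equality with $T_{D_2}(\tau)$ holds exactly when $T_{D_1}(\tau_{D_1})\subseteq T_{D_2}(\tau)$. This is automatic, for instance, if some $H^y$ with $y\in\supp D_1$ is unipotent: then $T_{D_1}(\tau_{D_1})=1$ by the argument of Proposition \ref{p:unipotent_implies_regular}.

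Only the inclusion $T_{D_2}(\tau)\subseteq\operatorname{Stab}$, which you proved, is used in Theorem \ref{t:main_theorem}. The downstream results are therefore unaffected.
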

\begin{proof}
    The action does not depend on the choice of representatives of the double cosets since $T(k)$ normalizes $T$ and $U$, which proves well-definiteness. We only need to check that this definition does not depend on the chosen reference trivialization.

Since $B$-compatible trivializations are related by elements of $B(\cO_D)$, choose $b\in B(\cO_D)$ by which we are going to change the chosen trivialization. Write 
$$
b=b_Tb_U,\qquad b_T\in T(\cO_D),\;b_U\in U(\cO_D).
$$
Then $(g_1,g_2)$ and $(g_1t,g_2)$ transform to $(g_1b,g_2b)$ and $(g_1tb,g_2b)$, respectively. Since we quotient on the right by $U$, the last two pairs are equivalent to $(g_1b_T,g_2b_T)$ and $(g_1tb_T,g_2tb_T)$ in the double quotients. Commutativity $tb_T=b_Tt$ finishes the proof.

To compute the stabilizer, take $t\in T(k)$ such that $t.(\cP_G,\phi)=(\cP_G,\phi)$. This means that there exists an automorphism $\alpha$ of $\cP_G$ fixing $\phi_{D_1}$ such that $\phi|_{D_1}\circ \alpha =\phi|_{D_1}t$. By Theorem \ref{t:automorphisms_reductive_general}(ii), it exists if and only if its torus-part is $T_{D_2}(\tau)$. Since its torus-part is $t$, we are done.
\end{proof}

To state our main theorem, recall from Theorem \ref{t:covering_map_GL} and Corollary \ref{cor:fiber_size_reductive} that if the ramification datum $(D,\cH)$ is regular over $D_2$, then $p_{D,D_2}:\Gamma_{D}^{>k}\to \Gamma_{D_2}^{>k}$ is a covering map of graphs for $k\ge \deg D$ with fiber of size
    $$
    \big|\cH(\cO_{D_1})\backslash G(\cO_{D_1})/T_{D_2}U(\cO_{D_1})\big|.
    $$
    
Therefore, to understand $p_{D,D_2}$ in this case, we only need to understand how topological loops lift. For a point 
$$
v:=(\cP_G,\phi|_{D_1},\phi|_{D_2})\in \Gamma_{D}^{>k}
$$
and a loop $\gamma$ in $\Gamma_{D_2}^{>k}$ based in $p_{D,D_2}(v)=(\cP_G,\phi|_{D_2})$, denote the endpoint of the lift of this loop to $\Gamma_{D}^{>k}$ by $\gamma(v)$. We also denote by $D_2'$ the divisor $D_2-d_x[x]$, i.e. the divisor obtained from $D_2$ by removing the $x$-part.
\begin{theorem}\label{t:main_theorem}
    Assume that $(D,\cH)$ is regular over $D_2$ and $D_2'$. With the above notation, $\gamma(v)$ lies in the $T_{D_2'}/T_{D_2}$-orbit of $v$. In particular, if $T_{D_2}=T_{D_2'}$, then the covering is disjoint over each connected component.
\end{theorem}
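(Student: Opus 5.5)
The plan is to follow a loop edge by edge and keep track of how the $D_1$-level structure changes under the chosen $B$-compatible trivializations. Write the loop $\gamma$ in $\Gamma_{D_2}^{>k}$ as a sequence of edges $w_0\to w_1\to\cdots\to w_m=w_0$ with $w_0=(\cP_G,\phi|_{D_2})$. By Proposition \ref{t:general_hecke_correspondence}, each edge is given by a pair $(f_i,\tau_i)$, where $f_i$ is an isomorphism of bundles away from $x$ (in whichever direction the edge requires) and $\tau_i$ is a coset representative at $x$. Since $x\notin\supp D_1$, $f_i$ restricts to an isomorphism over $D_1$ and identifies the $D_1$-parts of the level structures. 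Because all bundles lie deep in the cusp, $f_i$ preserves the Harder--Narasimhan $B$-reductions, so under $B$-compatible reference trivializations $f_i|_{D_1}\in B(\cO_{D_1})$. Using the second property of a covering (Theorem \ref{t:covering_map_GL}), the lift of $\gamma$ starting at $v=(\cP_G,\phi|_{D_1},\phi|_{D_2})$ is obtained by transporting $\phi|_{D_1}$ along the $f_i$. So the endpoint $\gamma(v)$ is $(\cP_G,\phi|_{D_1}\circ F,\phi|_{D_2}')$, where $F=(f_m\circ\cdots\circ f_1)|_{D_1}\in B(\cO_{D_1})$ and $\phi'|_{D_2}$ is isomorphic to $\phi|_{D_2}$, since the lift ends over $w_0$.

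Next, the composite $f:=f_m\circ\cdots\circ f_1$ is an isomorphism $\cP_G|_{X\setminus x}\to\cP_G|_{X\setminus x}$ that carries $\phi|_{D_2'}$ to a level structure isomorphic to $\phi|_{D_2'}$. Here I will use that Hecke modifications are the identity on level structures away from $x$. After composing with a global automorphism realizing the isomorphism $w_m\simeq w_0$, I can assume that $f$ fixes $\phi|_{D_2}$ up to that automorphism. The key claim is that the torus part of $F$ (modulo $U(\cO_{D_1})$) is a constant element $t\in T(k)$, and that $t$ lies in $T_{D_2'}(\tau)$. I would argue this by working in the cusp: the $T$-bundle $\cP_T$ is modified only at $x$, and the loop returns to the same $\cP_T$. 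So the induced map on $\cP_T|_{X\setminus x}$ is a rational automorphism of the $T$-bundle $\cP_T$ with divisor supported at $x$ and total degree zero on each character. Degrees are preserved around the loop, so it extends to a global automorphism of $\cP_T$, i.e.\ it lies in $T(k)$. The $U$-part is absorbed into $U(\cO_{D_1})$, which acts trivially on fibers by Corollary \ref{cor:fiber_size_reductive}. The same torus element $t$ also acts on the $D_2'$-component, and there $f$ preserves the level structure. Arguing as in the proof of Theorem \ref{t:automorphisms_reductive_general}(ii) at each $y\in\supp D_2'$ then gives $t\in\tau_yH^y\tau_y^{-1}U$, hence $t\in T_{D_2'}(\tau)$.

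Combining the two steps, $\gamma(v)=t.v$ in the sense of (\ref{eq:T-action}) with $t\in T_{D_2'}$. Since the stabilizer of $v$ is $T_{D_2}$ by the preceding lemma, $\gamma(v)$ lies in the $T_{D_2'}/T_{D_2}$-orbit of $v$. Regularity over $D_2$ and $D_2'$ is what makes these groups independent of $\tau$, so the orbit statement is intrinsic. If $T_{D_2}=T_{D_2'}$, every loop lifts to a loop, which gives disjointness over connected components.

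The main obstacle is the middle step: showing that the composite $f$ restricted to $D_1$ has a \emph{constant} torus part in $T(k)$ rather than an arbitrary element of $T(\cO_{D_1})$. This needs care, because $f$ is not a global automorphism of $\cP_G$, being defined only off $x$. The difficulty is that $f$ may have a pole or zero at $x$ on $\cP_T$, and I must check that these cancel to degree zero around a closed loop. It is also unclear how the $\tau_i$ at $x$, and the modification at $x$ inside $D_2$, interact with the $x$-part of the level structure. That interaction is exactly why the orbit group is $T_{D_2'}$ rather than $T_{D_2}$, since the condition at $x$ is not preserved by $f$.
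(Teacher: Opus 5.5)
Your proposal is correct and follows essentially the same route as the paper. The loop yields the composite $f\in\Aut(\cP_G|_{X\setminus x})$ preserving $\cP_B$ and a global automorphism $a$ realizing $w_m\simeq w_0$; the torus part of $af^{-1}$ is a constant in $T_{D_2'}$, and the $U(\cO_{D_1})$-part is absorbed via Theorem \ref{t:automorphisms_reductive_general}(ii), which is exactly the paper's lift of $(f|_{D_1}t,a|_{D_2})$ to an automorphism intertwining $v$ with $t.\gamma(v)$. The obstacles you flag are already resolved by your own argument: each character of the torus part of $f$ is a unit on $X\setminus x$, hence constant since $\cO(X\setminus x)^\times=k^\times$, and the $x$-component never has to be controlled by $f$ because $a$ handles it entirely.
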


\begin{proof}

As in the proof of \cite[Theorem 5.15]{KZ26}, the loop $\gamma$ gives $f\in\Aut(\cP_G|_{X\setminus x})$ preserving $\cP_B$ and $a\in\Aut(\cP_G)$ such that 
\begin{align*}
    \phi|_{D'}\circ f&=\psi|_{D'},\\
    \phi|_{D_2}\circ a&=\psi|_{D_2}.
\end{align*}
The map $f$ is composed by isomorphisms over $X\setminus x$ in the Hecke correspondences and $a$ exists by the assumption that $\gamma$ is a loop (the level structures over $D_2$ are isomorphic).

Fix a $B$-compatible reference trivialization of $\cP_G$ over $D$. Since both $f$ and $a$ preserve $\cP_B$, we have $f|_{D}\in T(k)U(\cO_{D})$ and $a\in T(k)U(\cO_D)$. Write $f=f_Tf_U$ and $a=a_Ta_U$ with respect to these decompositions and let $t:=a_T/f_T\in T(k)$. Since $af^{-1}$ preserves $\psi|_{D_2'}$, we must have $t\in T_{D_2'}$ by Theorem \ref{t:automorphisms_reductive_general}(ii). Moreover,
$$
(f|_{D_1}t,a|_{D_2})\in T(k)U(\cO_{D_1})\times T(k)U(\cO_{D_2})
$$
lifts to an automorphism of $\cP_G$ by the same Theorem. This automorphism intertwines $v=(\phi_{D_1},\phi_{D_2})$ with $t.\gamma(v)=(\psi_{D_1}t,\psi_{D_2})$, which proves that $\gamma(v)$ lies in the $T_{D_2'}$-orbit of $v$. This finishes the proof.

\end{proof}

\subsection{Summary and the general strategy for computations} \label{ss:summary_computations}
Let us summarize what we have done and how to compute the graphs of Hecke operators for general ramification. We claim that to know all the graphs at the cusp, it is enough to compute the following graphs at the cusp:
\begin{enumerate}[label=(\roman*)]
    \item $D=0$.
    \item $\supp D=\{x\}$, $H^x=1$.
    \item $\supp D=\{x,y\}$, $H^y=1$, any $H^x$ (but see Proposition \ref{p:change_of_ramification} for ways to map between different $H^x$).
\end{enumerate}
Note that if we compute $(iii)$ with $H^x=1$, then we automatically get $(ii)$ by forgetting the level structure at $y$, by Proposition \ref{p:change_of_ramification}.
 Take a ramification datum $(D,\cH)$. We will use Theorem \ref{t:main_theorem} for various decompositions $D=D_1+D_2$ as in (\ref{eq:d1_d2_condition}).
\begin{enumerate}
    \item If $x\notin\supp D$, then take $D_2=0$. The ramification datum is regular with $T_{D_2}=T_{D_2'}=T(k)$, hence the graph at the cusp is a number of copies of the unramified graph ($(i)$ above).
    \item If $\supp D=\{x\}$, then this is case $(ii)$ above.
    \item If $x\in\supp D$ and there exists $x\ne y\in\supp D$ with $H^y=1$, use $D_2$ with $\supp D_2=\{x,y\}$. This is the case $(iii)$ above. By Proposition \ref{p:unipotent_implies_regular}, $\cH$ is regular over $D_2$ and $D_2'$ with $T_{D_2}=T_{D_2}'$. Therefore, the desired graph at the cusp is a number of copies of the graph computed in $(iii)$ above.
    \item If $x\in\supp D$ and there exists $x\ne y\in\supp D$, start with the ramification datum $\cH'$ with $H'^z=H^z$ for $z\ne y$ and $H^y=1$ for $z=y$. This graph was computed in the previous case. By Proposition \ref{p:change_of_ramification}, the graph for $\cH$ at the cusp is obtained by identifying certain vertices in the graph for $\cH'$ without changing the edges.
 \end{enumerate}
 
If one wants to compute the entire graph, then Proposition \ref{p:change_of_ramification} is the main tool. See Remark \ref{r:change_of_ramification}.

\section{Computations for $\PGL_2$}
\subsection{Spaces of eigenforms for $\PGL_2$ unramified at $x$}\label{ss:unramified_PGL2}

Fix a point $x\in |X|$ of degree $r$ and a divisor $D$ on $X$ such that $x\notin\operatorname{supp}D$. This case was studied in \cite[Section 4.3]{KZ26}, and the results in our new setting are completely analogous. Let $\Gamma$ be the graph of the corresponding Hecke operator. Use the following decomposition:
\begin{itemize}
    \item $\Gamma_i:=\{\cL_1\oplus\cL_2:ri<\deg \cL_1-\deg\cL_2-(2g-2+\deg D)\le r(i+1)\}/\sim$,
    \item $\Gamma':=\Gamma\setminus\bigsqcup_{i\ge 1}\Gamma_i$,
\end{itemize}
where $\sim$ is the equivalence relation given by tensoring with a line bundle with a level structure at $D$. The graph is presented on Figure \ref{fig:PGL2_unramified}, where the number of possible level structures in the fibers equals
$$
\left|\cH(\cO_D)\setminus G(\cO_D)/T(k)\ltimes U(\cO_D)\right|.
$$

\begin{figure}
    \centering
\scalebox{.8}{
\includegraphics{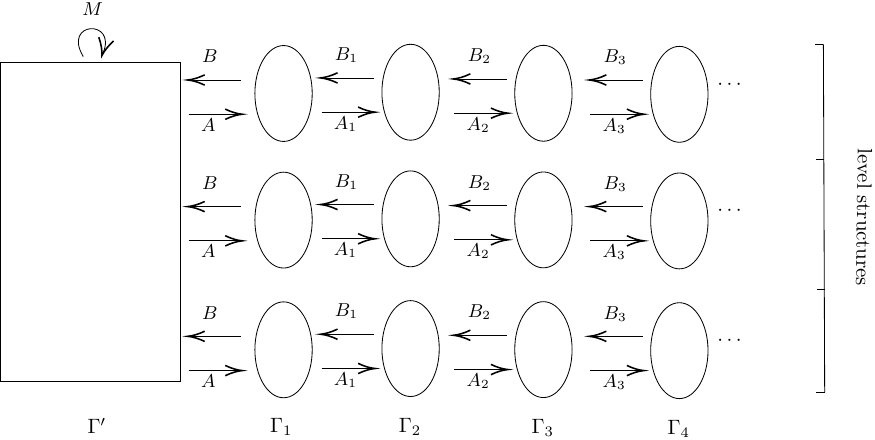}
}
\caption{Graph for $\PGL_2$ unramified at $x$.}
\label{fig:PGL2_unramified}
\end{figure}

Therefore, 
\begin{theorem} \label{t:PGL2_ramified_not_at_x}
    Let $x$ be a point of degree $r$, and let $\Phi_x$ be the corresponding Hecke operator. Consider ramification at divisor $D=\sum_y d_y[y]$, whose support does not contain $x$. Then
    \begin{align*}
r\cdot|\operatorname{Pic}^0(X)(k)|&\cdot
\left|\cH(\cO_D)\setminus G(\cO_D)/T(k)\ltimes U(\cO_D)\right|
\le \dim_\lambda\Gamma \\
&\le \dim_\lambda\Gamma'
+r\cdot|\operatorname{Pic}^0(X)(k)|\cdot
\left|\cH(\cO_D)\setminus G(\cO_D)/T(k)\ltimes U(\cO_D)\right|.
\end{align*}
 These become equalities for all but finitely many $\lambda$.
\end{theorem}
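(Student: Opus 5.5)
The plan is to reduce to the unramified graph via the covering results of Section~\ref{s:general_G}, and then run the propagation argument of \cite[Section 4.3]{KZ26} componentwise.

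\textbf{Reduction to the unramified graph.} Take $D_1=D$ and $D_2=0$. Since $x\notin\supp D$, the decomposition satisfies (\ref{eq:d1_d2_condition}), and the datum is trivially regular over $D_2=0$ and $D_2'=0$, with $T_{D_2}=T_{D_2'}=T(k)$. Theorem~\ref{t:covering_map_GL} shows that on $\Gamma^{>k}$ for $k\ge\deg D$ the map $p_{D}$ is a covering of the unramified cusp graph. Its fibers have size $N:=|\cH(\cO_D)\backslash G(\cO_D)/T(k)\ltimes U(\cO_D)|$ by Corollary~\ref{cor:fiber_size_reductive}. Theorem~\ref{t:main_theorem} shows this covering is disjoint over each connected component.

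We then pass from $\GL_2$ to $\PGL_2$ using Proposition~\ref{p:from_GL_to_PGL}, or directly in $\PGL_2$ with split bundles $\cL_1\oplus\cL_2$ up to twist. The conclusion is that the cusp part $\bigsqcup_{i\ge1}\Gamma_i$ is $N$ disjoint copies of the unramified cusp graph of \cite{KZ26}. That unramified cusp graph consists of $r|\Pic^0(X)(k)|$ infinite ``rays'' indexed by $\deg\cL_1-\deg\cL_2$ modulo $r$ and the class in $\Pic^0$ (after normalization by twisting). Each vertex in $\Gamma_i$ has a single outgoing edge deeper into the cusp, namely $q^r$ edges collapsing onto one vertex, and edges coming back. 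This is the structure depicted in Figure~\ref{fig:PGL2_unramified}.

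\textbf{Lower bound.} On each ray the eigenvalue equation $\Phi_xf=\lambda f$ becomes a two-term linear recurrence along $\Gamma_i$, $i\ge1$. Following \cite[Section 4.3]{KZ26}, we prescribe the values freely at the boundary vertex of each ray and propagate inward toward $\Gamma'$. For $\lambda\neq0$, each ray contributes a free parameter that can be extended to a global eigenfunction. In the unramified case this gives $r|\Pic^0(X)(k)|$ independent eigenfunctions; with $N$ disjoint copies we get $N\,r|\Pic^0(X)(k)|$ independent eigenfunctions, which is the lower bound.

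\textbf{Upper bound and genericity.} The restriction of an eigenfunction to $\Gamma'$ together with one value per ray determines it. This gives
\[
\dim_\lambda\Gamma\le\dim_\lambda\Gamma'+N\,r|\Pic^0(X)(k)|.
\]
Here, following \cite{KZ26}, $\dim_\lambda\Gamma'$ is the eigenspace dimension of the truncated finite operator, with edges leaving $\Gamma'$ absorbed appropriately.

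Since $\Gamma'$ is finite, $\dim_\lambda\Gamma'\neq0$ for only finitely many $\lambda$. For the remaining $\lambda$ the two bounds coincide. The one extra point to check is that the recurrence's compatibility condition at the junction with $\Gamma'$ fails only for finitely many $\lambda$.

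\textbf{Main obstacle.} The key difficulty is verifying that the boundary region between $\Gamma'$ and $\Gamma_1$ behaves as in \cite{KZ26}. The covering statement only holds for $k\ge\deg D$, so the shift by $2g-2+\deg D$ in the definition of $\Gamma_i$ must be checked to put $\Gamma_1$ inside $\Gamma^{>\deg D}$. If this fails, I would enlarge $\Gamma'$ by finitely many layers. That changes only the finite part and does not affect the statement.
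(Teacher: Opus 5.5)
Your reduction is the paper's own. With $D_1=D$ and $D_2=0$ the datum is regular with $T_{D_2}=T_{D_2'}=T(k)$, so Theorem~\ref{t:covering_map_GL}, Corollary~\ref{cor:fiber_size_reductive} and Theorem~\ref{t:main_theorem} make the cusp $N$ disjoint copies of the unramified cusp graph. The paper then simply runs the propagation argument of \cite[Section 4.3]{KZ26}. Your version of that propagation has errors, one of which breaks the lower-bound argument as written.

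\textbf{The propagation step.} First, the multiplicities are swapped. From $\cL_1\oplus\cL_2$ there is one edge, of multiplicity $1$, to $\cL_1\oplus\cL_2(-x)$ (deeper), and $q^r$ edges back to $\cL_1(-x)\oplus\cL_2$. What the argument needs is that the deeper edges define bijections $\Gamma_i\to\Gamma_{i+1}$.

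Second, propagation runs outward, not inward toward $\Gamma'$. Solve the eigen-equation at a vertex of $\Gamma_i$, $i\ge1$, for the value at its unique neighbour in $\Gamma_{i+1}$. This shows $f$ is determined by $v=f|_{\Gamma'}$ and $x_1=f|_{\Gamma_1}$. The only remaining constraints are the eigen-equations on $\Gamma'$, namely $(M-\lambda)v+Ax_1=0$. Here $M$ is the adjacency matrix of the induced subgraph $\Gamma'$ and $A$ records the edges $\Gamma'\to\Gamma_1$; these edges are not ``absorbed'' into $M$.

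Third, your lower bound claims that arbitrary values on $\Gamma_1$ extend to eigenfunctions. That requires $Ax_1\in\operatorname{im}(M-\lambda)$, which fails in general when $\lambda$ is an eigenvalue of $M$. The correct argument is a count. There are $|\Gamma'|$ linear equations in $|\Gamma'|+|\Gamma_1|$ unknowns, so $\dim_\lambda\Gamma\ge|\Gamma_1|=N r|\Pic^0(X)(k)|$. The projection $(v,x_1)\mapsto x_1$ has kernel $\ker(M-\lambda)$, which gives the upper bound. Both bounds coincide once $M-\lambda$ is invertible, which excludes only finitely many $\lambda$.

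Finally, nothing here divides by $\lambda$. Your restriction to $\lambda\neq0$ is unnecessary, and as written it leaves $\lambda=0$ unproved, although the statement has no such hypothesis.

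\textbf{Your ``main obstacle''.} You do not need to enlarge $\Gamma'$, and doing so would change the statement. The upper bound involves $\dim_\lambda\Gamma'$ for this specific $\Gamma'$, and that is not comparable with the eigenspace dimension of a larger finite graph. Instead, check the ray structure directly. For $i\ge1$, vertices of $\Gamma_i$ and $\Gamma_{i+1}$ satisfy $\deg\cL_1-\deg\cL_2>2g-2+\deg D$. So they lie in the $D$-cusp and carry exactly $N$ level structures by Corollary~\ref{cor:fiber_size_reductive}. The deeper edge transports the level structure over $D$, which is disjoint from $x$, by a diagonal element of $T(\cO_D)$ with respect to split trivializations. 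That element normalizes $T(k)U(\cO_D)$, so it induces a bijection on level structures. Hence the layers $\Gamma_i$, $i\ge1$, already behave as required, without the range $k\ge\deg D$ of Theorem~\ref{t:covering_map_GL}.
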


\subsection{Spaces of eigenforms for $\PGL_2$ ramified at $x$: $U$ and $B$ ramification} \label{ss:ramified_PGL2}
We study the graph of the Hecke operator $\Phi_x$ with ramification at $d[x]$ with $H^x\in\{U,B\}$. We have already done the $H^x=1$ case in \cite[Section 4.3]{KZ26}.

As we know, the level structures on a bundle $\cL_1\oplus\cL_2$ with $\deg \cL_1\gg\deg L_2$ are parametrized by 
\begin{itemize}
    \item $H=U$: $U(\cO_{d[x]})\backslash G(\cO_{d[x]})/(T(k)\ltimes U(\cO_{d[x]}))$.
    \item $H=B$: $B(\cO_{d[x]})\backslash G(\cO_{d[x]})/(T(k)\ltimes U(\cO_{d[x]}))$.
\end{itemize}

For $\PGL_2$, dividing by the center $Z$ of $\GL_2$ gives
\begin{itemize}
    \item $H=U$: $Z(\cO_{d[x]})U(\cO_{d[x]})\backslash G(\cO_{d[x]})/(T(k)\ltimes U(\cO_{d[x]}))$.
    \item $H=B$: $B(\cO_{d[x]})\backslash G(\cO_{d[x]})/(T(k)\ltimes U(\cO_{d[x]}))$.
\end{itemize}

 After fixing a trivialization of $\cL_1\oplus\cL_2$ composed of trivializations of $\cL_1$ and $\cL_2$, the level structure is given by a matrix $(a_{ij})$ in $G(\cO_{d[x]})/(T(k)\ltimes U(\cO_{d[x]}))$. Projecting everything to 
 $$
 B(k)\backslash G(k)/B(k)=W=\{\mathrm{id},s\},
 $$ 
 we get a splitting of each double quotient into two components: preimage of $\mathrm{id}$ and of $s$:
\begin{itemize}
    \item {\bfseries $\mathrm{id}$ case:} $a_{21}\in \pi_x\cO_{d[x]}$ for $s\ge 1$. In this case, we can reduce a representative of an element in the quotient to
    \begin{equation}\label{matrix_form_at_identity}
    \begin{pmatrix}
    1&0\\
    \pi_x\cO_{d[x]}&\cO_{d[x]}^\times
\end{pmatrix}.
\end{equation}
    Note that some of these matrices still give same double cosets, but having this form will be enough for our computations.
    \item {\bfseries $s$ case:} $a_{21}\in \cO_{d[x]}^\times$. In this case, there is a slice in the above quotient consisting of matrices
    \begin{equation}\label{eq:matrix_form_at_s}
    \begin{pmatrix}
    0&\cO_{d[x]}^\times/k^\times\\
    1&0
\end{pmatrix}.
\end{equation}
We check that these matrices represent different double cosets. Take two such matrices with entries $a$ and $b$ in the upper-right corner. They represent the same double coset if and only if
$$
Z(\cO_D)U(\cO_D)\mat{0}{a}{1}{0}\cap \mat{0}{b}{1}{0}T(k)U(\cO_D)\ne \varnothing.
$$
We compute general elements of the members of this intersection:
\begin{align*}
    \mat{x}{y}{0}{x}\cdot \mat{0}{a}{1}{0}&=\mat{y}{ax}{x}{0},\qquad x,y\in\cO_{d[x]},\\
\mat{0}{b}{1}{0}\cdot \mat{p}{q}{0}{r}&=\mat{0}{br}{p}{q},\qquad p,r\in k^\times, q\in\cO_{d[x]}.
\end{align*}
Equality of such elements gives $y=q=0$, $x=p\in k^\times$, $a/b=r/x\in k^\times$. This shows that elements (\ref{eq:matrix_form_at_s}) represent different double cosets.
\end{itemize}

To get the result for $B$, we just need to quotient the result for $U$ by $T(\cO_D)$. We get
\begin{itemize}
    \item {\bfseries $\mathrm{id}$ case:} $a_{21}\in \pi_x\cO_{d[x]}$ for $s\ge 1$. In this case, we can reduce a representative of an element in the double quotient to
    \begin{equation}\label{eq:B_matrix_form_at_identity}
    \begin{pmatrix}
    1&0\\
    \pi_x\cO_{d[x]}&1
\end{pmatrix}.
\end{equation}
    Note that some of these matrices still give same double cosets, but having this form will be enough for our computations.
    \item {\bfseries $s$ case:} $a_{21}\in \cO_{d[x]}^\times$. In this case, there is a slice in the above quotient consisting of matrices
    \begin{equation}\label{eq:B_matrix_form_at_s}
    \begin{pmatrix}
    0&1\\
    1&0
\end{pmatrix},
\end{equation}
so we actually get only one coset in this case.
\end{itemize}

Now, we compute the edges in the graph based on which of the two cases we are in. Our calculation will run in the following way. We fix a level structure $a\in\GL_2(\cO_D)$, and the goal is to compute all level structures $b$ in the double coset space connected to $a$ by an edge. The edge corresponding to $\tau\Delta K\in K\Delta K/K$ gives
$$
b=\Delta^{-1}\tau a\sigma,
$$
where $\sigma:\cO^2_{d[x]}\to\cO^2_{d[x]}$ is the inclusion map corresponding to extension of line bundles. Recall that if we choose extension $\cL_1(-x)\oplus\cL_2$, then $\sigma=\smat{\pi_x}{t}{0}{1}$ for $t\in k_x$, and if we choose $\cL_1\oplus\cL_2(-x)$, then $\sigma=\smat{1}{0}{0}{\pi_x}$. In addition, it follows from Theorem \ref{t:hecke_gln_cases} that $\tau$ can be taken in the form $\smat{1}{c}{0}{1}$ for $c\in k_x$.

{\bfseries Case $s$.} We pick $a$ in the form (\ref{eq:matrix_form_at_s}). We start with $\sigma=\smat{1}{0}{0}{\pi_x}$, for which the corresponding line subbundle is $\cL_1\oplus\cL_2(-x)$. Taking $\tau=\smat{1}{c}{0}{1}$, we get
\begin{align*}
    (b_{ij})&=\mat{\pi_x^{-1}}{0}{0}{1}\mat{1}{c}{0}{1}\mat{0}{a_{12}}{1}{0}\mat{1}{0}{0}{\pi_x}\\
    &=\mat{\pi_x^{-1}c}{a_{12}}{1}{0},
\end{align*}
which shows that only $c=0$ works, in which we get the level structure $\mat{0}{a_{12}}{1}{0}$. This implies that in this case, there is exactly one edge of multiplicity $1$ going from each vertex. Moreover, we get all level structures in the $s$-locus by varying $a_{12}$. Thus, using the decomposition from Section \ref{ss:unramified_PGL2}, we see that such edges from the $s$-region to the $s$-region form bijections between the $s$-regions of $\Gamma_i$ and $\Gamma_{i+1}$.

Now, consider the case with $\sigma=\smat{\pi_x}{t}{0}{1}$, for which the corresponding line subbundle is $\cL_1(-x)\oplus\cL_2$. Taking $\tau=\smat{1}{c}{0}{1}$, we get
\begin{align*}
    (b_{ij})&=\mat{\pi_x^{-1}}{0}{0}{1}\mat{1}{c}{0}{1}\mat{0}{a_{12}}{1}{0}\mat{\pi_x}{t}{0}{1}\\
    &=\mat{c}{\pi_x^{-1}(a_{12}+tc)}{\pi_x}{t},
\end{align*}
which implies that $c\equiv -a_{12}/t\pmod{\pi_x}$. This gives exactly one edge for each $t\ne 0$, and all such edges go to the $\mathrm{id}$ locus.

{\bfseries Case $\mathrm{id}$.} We pick $a$ in the form (\ref{eq:B_matrix_form_at_identity}). Take $\sigma=\smat{\pi_x}{t}{0}{1}$. Taking $\tau=\smat{1}{c}{0}{1}$, we get
\begin{align*}
    (b_{ij})&=\mat{\pi_x^{-1}}{0}{0}{1}\mat{1}{c}{0}{1}\mat{1}{0}{a_{21}}{a_{22}}\mat{\pi_x}{t}{0}{1}\\
    &=\mat{1+a_{21}c}{\pi_x^{-1}(t+ta_{21}c+a_{22}c)}{\pi_x a_{21}}{ta_{21}+a_{22}},
\end{align*}
which implies that $t\equiv -\frac{a_{22}c}{1+a_{21}c}\pmod{\pi_x}$. Since $a_{21}\in \pi_x\cO_{d[x]}$, this equation always has a solution. Thus, every $\tau$ gives an edge in this case, so there are no edges going to $\cL_{1}\oplus\cL_{2}(-x)$. Let's do the calculation nevertheless to see the contradiction in this case:
\begin{align*}
    (b_{ij})&=\mat{\pi_x^{-1}}{0}{0}{1}\mat{1}{c}{0}{1}\mat{1}{0}{a_{21}}{a_{22}}\mat{1}{0}{0}{\pi_x}\\
    &=\mat{\pi_x^{-1}(1+a_{21}c)}{a_{22}c}{a_{21}}{\pi_x a_{22}},
\end{align*}
which implies that $1+a_{21}c\in\pi_x\cO_{d[x]}$. This is impossible since $a_{21}\in\pi_x\cO_{d[x]}$ by assumption.

Therefore, the graph looks like on Figure \ref{fig:PGL2_ramified_at_x}.
\begin{figure}
    \centering
\scalebox{.8}{
\includegraphics{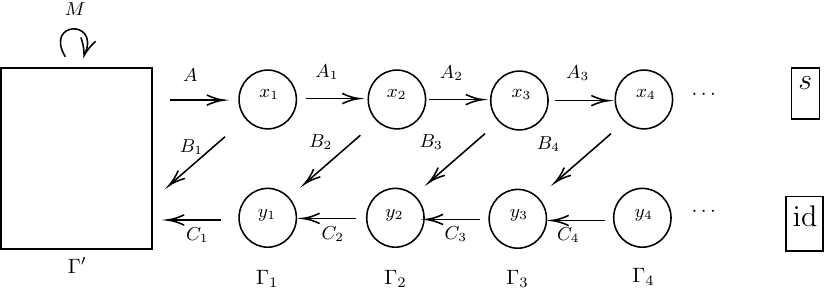}

}
\caption{Graph for $\PGL_2$ ramified at $x$ with $H=B$ or $U$.}
\label{fig:PGL2_ramified_at_x}
\end{figure}

To compute the space of eigenforms, we utilize the methods from \cite[Section 4.2]{KZ26}. Let the matrices corresponding to edges and values of an eigenform be as in Figure \ref{fig:PGL2_ramified_at_x}, with $M$ being the adjacency matrix of $\Gamma'$ and $v$ the value on $\Gamma'$. The eigenvalue condition dictates:
\begin{itemize}
    \item $(M-\lambda)v+Ax_1=0$, 
    \item $\lambda x_1=B_1v+A_1x_2$, $\lambda x_i=B_iy_{i-1}+A_ix_{i+1}$,
    \item $\lambda y_1=C_1v$, $\lambda y_i=C_iy_{i-1}$.
\end{itemize}

We see that any solution of the first equation propagates uniquely to the global solution of all these equations on the entire graph because of $A_i$ being isomorphisms. Therefore, we get the following:
\begin{theorem} \label{t:PGL2_ramified_at_x}
    Let $x$ be a point of degree $r$, $k=\FF_q$, and let the Hecke operator $\Phi_x$ at $x$ be ramified at $d[x]$. Let $\lambda\ne 0$. Then
    \begin{enumerate}[label=(\roman*)]
        \item If $H=U$, then
            $$
        r\cdot|\operatorname{Pic}^0(X)(k)|\cdot\frac{(q^r-1)q^{r(d-1)}}{q-1}\le\dim_\lambda\Gamma\le \dim_\lambda\Gamma'+r\cdot|\operatorname{Pic}^0(X)(k)|\cdot\frac{(q^r-1)q^{r(d-1)}}{q-1}.
            $$
         \item If $H=B$, then
            $$
        r\cdot|\operatorname{Pic}^0(X)(k)|\le\dim_\lambda\Gamma\le \dim_\lambda\Gamma'+r\cdot|\operatorname{Pic}^0(X)(k)|.
            $$
    \end{enumerate}
        
    These are equalities for all but finitely many $\lambda$.
\end{theorem}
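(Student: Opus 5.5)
The plan follows the propagation method of \cite[Section 4.2]{KZ26}, applied to the graph structure just computed (Figure \ref{fig:PGL2_ramified_at_x}).

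\textbf{Decomposition and lower bound.} Decompose $\Gamma=\Gamma'\sqcup\bigsqcup_{i\ge1}\Gamma_i$ as in Section \ref{ss:unramified_PGL2}, with $D=d[x]$. Split each cusp layer $\Gamma_i$ into its $s$-locus (coordinates $x_i$) and its $\mathrm{id}$-locus (coordinates $y_i$). The edge computation above gives the following structure:
\begin{itemize}
\item the $s$-to-$s$ edges give bijections $A_i$ from the $s$-locus of $\Gamma_i$ to that of $\Gamma_{i+1}$;
\item every other edge leaving $\Gamma_i$ (or leaving $\Gamma'$ toward the cusp) lands in an $\mathrm{id}$-locus, via the maps $B_i,C_i$;
\item $\Gamma'$ is coupled to $x_1$ through $A$.
\end{itemize}
The eigenvalue system is therefore the one displayed before the statement. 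For $\lambda\ne0$, fix any $v$ on $\Gamma'$ and any $x_1$ satisfying the first equation. Since $A_i$ is invertible, $x_{i+1}=A_i^{-1}(\lambda x_i-B_iy_{i-1})$ is determined, and $y_i=\lambda^{-1}C_iy_{i-1}$ is determined too. Hence eigenfunctions correspond bijectively to solutions $(v,x_1)$ of $(M-\lambda)v+Ax_1=0$.

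This gives two bounds at once. The solution space projects onto $\ker$ of $(M-\lambda)$ restricted to $v$, modulo the $x_1$-freedom. Its dimension is at least $\#\{\text{coordinates of }x_1\}$, since $v=0$ and $x_1\in\ker A$ together with the $v$ freedom give at least that many after a rank count: the space has dimension $\ge(\dim v+\dim x_1)-\dim v$. It is also at most $\dim_\lambda\Gamma'+\#x_1$. This is the same argument as for Theorem \ref{t:PGL2_ramified_not_at_x}.

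\textbf{Counting the $s$-locus.} It remains to count $|x_1|$, the size of the $s$-locus of $\Gamma_1$. In $\Gamma_1$ there are $r|\operatorname{Pic}^0(X)(k)|$ bundles up to twist, since the degree difference ranges over $r$ values. The factor $\operatorname{Pic}^0$ comes from the choice of $\cL_1\otimes\cL_2^{-1}$. The level structures in the $s$-locus over each such bundle are represented by the slice (\ref{eq:matrix_form_at_s}), which we showed gives distinct double cosets. Their number is $|\cO_{d[x]}^\times/k^\times|=(q^r-1)q^{r(d-1)}/(q-1)$ for $H=U$. For $H=B$ the slice is the single coset (\ref{eq:B_matrix_form_at_s}). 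Multiplying yields the two stated bounds.

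\textbf{Genericity.} Equality for all but finitely many $\lambda$ requires $\dim_\lambda\Gamma'=0$, together with surjectivity of the relevant map so that every $x_1$ arises. Because $\Gamma'$ is finite, $M-\lambda$ is invertible off its finitely many eigenvalues. Then $v=-(M-\lambda)^{-1}Ax_1$ is uniquely determined by an arbitrary $x_1$, giving exactly $\#x_1$ dimensions.

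\textbf{Main obstacle.} The hard step is confirming the graph shape: the $s$-to-$s$ edges must give bijections between consecutive layers with multiplicity one, and no edge may return from the $\mathrm{id}$-locus to the $s$-locus. Both facts come from the case computations above. One must also check that they persist after passing to $\PGL_2$ via Proposition \ref{p:from_GL_to_PGL}, which applies since $U$ and $B$ satisfy its hypothesis.
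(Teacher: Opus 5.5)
Your proposal is correct and follows essentially the same route as the paper. You use the computed graph shape ($s$-loci linked by the bijections $A_i$, $\mathrm{id}$-loci only feeding downward), show that every solution $(v,x_1)$ of $(M-\lambda)v+Ax_1=0$ propagates uniquely when $\lambda\neq 0$, and count $|x_1|=r|\operatorname{Pic}^0(X)(k)|\cdot|\cO_{d[x]}^\times/k^\times|$ (resp.\ $r|\operatorname{Pic}^0(X)(k)|$ for $H=B$); your rank count and genericity step only spell out what the paper leaves implicit. One small slip: your bullet saying that edges from $\Gamma'$ into the cusp land in an $\mathrm{id}$-locus is wrong. They land in the $s$-locus of $\Gamma_1$, because $\mathrm{id}$-locus vertices have no upward edges and the upward edge of an $s$-locus vertex stays in the $s$-locus, and this is exactly why the first equation contains only $Ax_1$ and the upper bound is governed by $\dim_\lambda\Gamma'$.
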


Moreover, since $H^x=U$ is regular by Proposition \ref{p:unipotent_implies_regular}, we get that the graph ramified at $D=D'+d_x[x]$ with $H^x=U$ and $d_x\ge 1$ is a covering of the graph on Figure \ref{fig:P1_PGL2_ramified_at_x}. It will retain the same general structure shown on Figure \ref{fig:P1_PGL2_ramified_at_x}. Since $A_i$ are bijections on the graph ramified entirely at $x$, the $s$-locus is a disjoint union of strings homeomorphic to the real line. They are simply connected, hence any covering is trivial. This implies that $A_i$ on the graph ramified at $\cH$ will be bijections as well, hence the same computation applies. We get the following final result.
\begin{theorem}\label{t:H^x=U_dimensions}
    If $\lambda\ne 0$ and $(D,\cH)$ is a ramification datum with $H^x=U$ and $d_x\ge 1$. Then
            $$
        r\cdot|\operatorname{Pic}^0(X)(k)|\cdot\frac{(q^r-1)q^{r(d-1)}}{q-1}\cdot |\cH(\cO_{D'})\backslash G(\cO_{D'})/U(\cO_{D'})|\le\dim_\lambda\Gamma$$
        and
        $$
        \dim_\lambda\Gamma\le \dim_\lambda\Gamma'+r\cdot|\operatorname{Pic}^0(X)(k)|\cdot\frac{(q^r-1)q^{r(d-1)}}{q-1}\cdot |\cH(\cO_{D'})\backslash G(\cO_{D'})/U(\cO_{D'})|.
            $$
\end{theorem}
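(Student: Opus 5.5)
The plan is to reduce Theorem \ref{t:H^x=U_dimensions} to the computation behind Theorem \ref{t:PGL2_ramified_at_x}(i) through the covering map $p_{D,d_x[x]}$. We take $D_1=D'$ and $D_2=d_x[x]$; this satisfies (\ref{eq:d1_d2_condition}) because $x\notin\supp D'$.

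First, since $H^x=U$ is unipotent, Proposition \ref{p:unipotent_implies_regular} makes $\cH$ regular over $D_2$, and its proof gives $T_{D_2}=1$. Theorem \ref{t:covering_map_GL} then says that $p_{D,D_2}:\Gamma_D^{>k}\to\Gamma_{D_2}^{>k}$ is a covering of graphs for $k\ge\deg D$. By Corollary \ref{cor:fiber_size_reductive}, every fiber has size
$$N:=|\cH(\cO_{D'})\backslash G(\cO_{D'})/U(\cO_{D'})|.$$
For $\PGL_2$ we pass through Proposition \ref{p:from_GL_to_PGL}. Its hypothesis holds because $U$ has diagonal entries equal to $1$. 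This lets us perform the computation on the $\GL_2$ graph and then glue vertices.

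Second, we transport the structure of Figure \ref{fig:PGL2_ramified_at_x}. The base graph $\Gamma_{D_2}$ decomposes into a finite part $\Gamma'_{D_2}$, the $s$-locus strings $x_i$, and the $\mathrm{id}$-locus strings $y_i$. The $s$-to-$s$ edges form bijections $A_i$ between consecutive levels. As a result, the $s$-locus is a disjoint union of rays, each isomorphic as a directed graph to a half-line. Every covering of such a simply connected piece is trivial, so its preimage in $\Gamma_D$ is $N$ disjoint copies of it. The pulled-back maps $A_i$ are therefore again bijections, and the $\mathrm{id}$-locus edges and the edges $B_i$, $C_i$ lift edge-by-edge by the covering property.

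We then choose $\Gamma'$ to be the preimage of $\Gamma'_{D_2}$ together with the region above the threshold $k$ where the covering is not yet known. This is a finite subgraph. With this choice, the eigenvalue equations keep exactly the same form as in the earlier computation:
\begin{itemize}
\item $(M-\lambda)v+Ax_1=0$;
\item $\lambda x_i=B_iy_{i-1}+A_ix_{i+1}$;
\item $\lambda y_i=C_iy_{i-1}$.
\end{itemize}
For $\lambda\neq0$ the $y_i$ are determined by $v$. The $x_{i+1}$ are determined by $A_i^{-1}$ from the earlier data, with $x_1$ free. Hence
$$\dim_\lambda\Gamma=\dim\{(v,x_1):(M-\lambda)v+Ax_1=0\}.$$
This quantity lies between the number of free coordinates of $x_1$ and that number plus $\dim_\lambda\Gamma'$. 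The number of $x_1$ coordinates is $N$ times the count in the base case. That base count is $r|\Pic^0(X)(k)|\,(q^r-1)q^{r(d-1)}/(q-1)$, where $r$ comes from the residues of degree mod $r$, $|\Pic^0|$ from the line bundles modulo twisting, and the last factor is the size of $\cO_{d[x]}^\times/k^\times$ from slice (\ref{eq:matrix_form_at_s}).

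The main obstacle is making the finite region and the cusp region match up. The covering statement holds only on $\Gamma^{>k}$, so the strings must begin beyond $k\ge\deg D$. In particular, the number of $x_1$ vertices must be counted at a level inside the cusp, where the fiber count $N$ is valid. We also have to check that the $\GL_2\to\PGL_2$ passage commutes with forgetting the structure at $D'$. I would handle this by carrying out the whole covering argument in $\GL_2$ and applying Proposition \ref{p:from_GL_to_PGL} only at the end. I would also verify that the center's level group $Z\cap H^x$ acts freely on these fibers, so that the quotient does not change the factor $N$.
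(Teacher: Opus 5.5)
Your proposal is correct and follows the paper's argument: regularity with $T_{d_x[x]}=1$ from the unipotent $H^x=U$, the covering $p_{D,d_x[x]}$ with fibers of size $N$ from Corollary~\ref{cor:fiber_size_reductive}, triviality of the covering over the simply connected $s$-strings so that the $A_i$ remain bijections, and the same propagation reducing everything to $\{(v,x_1):(M-\lambda)v+Ax_1=0\}$ (your care about the threshold $k\ge\deg D$ is a refinement the paper leaves implicit). The detour through $\GL_2$ is unnecessary, since Section~\ref{s:general_G} applies directly to $G=\PGL_2$ and gives $N$ as a $\PGL_2$ double-coset count; if you do take the detour, note that the group acting on the fibers is $Z(\cO_{D'})$ modulo $(Z\cap\cH)(\cO_{D'})$, not $Z\cap H^x$, and this action genuinely shrinks the $\GL_2$ fiber count to the $\PGL_2$ one rather than leaving it unchanged.
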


{\color{black}
Let $\Gamma$ be one of the $\PGL_2$-Hecke graphs of Hecke operators $\Phi$ considered in Theorems~\ref{t:PGL2_ramified_not_at_x}\;,\;\ref{t:PGL2_ramified_at_x}, and\; \ref{t:H^x=U_dimensions}. We generalize these results to the case of generalized eigenspaces.

\begin{theorem}
\label{t:generic_generalized_eigenspaces}
In all the above examples, assume that $\lambda\in\CC\setminus0$ is not an eigenvalue for the adjacency matrix of $\Gamma'$.
Then \begin{equation}\label{eq:generic_surjectivity}
  \Phi-\lambda\cdot\mathrm{id}:\CC^\Gamma\to\CC^\Gamma
\end{equation}
is surjective. In particular, for every $m\geq1$, \begin{equation}\label{eq:generalized_eigenspace_dimension}
  \dim\ker(\Phi-\lambda)^m=m\dim_{\lambda}\Gamma.
\end{equation}
\end{theorem}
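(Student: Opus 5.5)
We prove surjectivity by solving $(\Phi-\lambda)f=g$ for an arbitrary $g\in\CC^\Gamma$, using the same block decomposition that gave the eigenvalue equations. In the notation of Figure~\ref{fig:PGL2_ramified_at_x} (and analogously Figure~\ref{fig:PGL2_unramified}), write $f=(v,x_1,x_2,\dots,y_1,y_2,\dots)$ and $g=(g_v,g_{x,1},\dots,g_{y,1},\dots)$, where $v$ is the restriction to the finite subgraph $\Gamma'$. Then $(\Phi-\lambda)f=g$ is the inhomogeneous system
\[
(M-\lambda)v+Ax_1=g_v,\qquad \lambda x_i-B_iy_{i-1}-A_ix_{i+1}=-g_{x,i},\qquad \lambda y_i-C_iy_{i-1}=-g_{y,i},
\]
with $y_0:=v$ and $B_1y_0:=B_1v$, $C_1y_0:=C_1v$ as in the homogeneous case.

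\textbf{Constructing a solution.} Since $\lambda$ is not an eigenvalue of $M$, we may set $x_1:=0$ and $v:=(M-\lambda)^{-1}g_v$. Because $\lambda\neq0$, the $y$-equations then determine all $y_i$ recursively:
\[
y_i=\lambda^{-1}(C_iy_{i-1}-g_{y,i}).
\]
Because each $A_i$ is a bijection (we proved this for the graph ramified only at $x$, and it transfers to coverings by the simple-connectedness argument preceding Theorem~\ref{t:H^x=U_dimensions}; in the unramified case it follows from the covering structure of Theorem~\ref{t:covering_map_GL}), the $x$-equations determine $x_{i+1}$ recursively:
\[
x_{i+1}=A_i^{-1}(\lambda x_i-B_iy_{i-1}+g_{x,i}).
\]
Since $\CC^\Gamma$ consists of all functions with no growth condition, this infinite recursion yields a genuine element $f$, which gives surjectivity.

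\textbf{The main obstacle.} The delicate point is checking that the block structure really has this triangular shape. Every vertex outside $\Gamma'$ must receive edges only from its predecessor in the strings and from the level above it. In particular, we need the exact pattern of which blocks feed which: the $y$-region (the $s$-locus) is fed only from $v$ and $y_{i-1}$, and the $A_i$-edges are the only ones pointing toward $x_{i+1}$. We read this off from the explicit edge computation in Cases $s$ and $\mathrm{id}$. For the general coverings we invoke Theorem~\ref{t:main_theorem}. In the case $x\notin\supp D$, the same shape follows from the corresponding figure in \cite{KZ26}.

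\textbf{Deducing the dimension formula.} Let $N=\Phi-\lambda$, so that $N$ is surjective on $\CC^\Gamma$. The short exact sequence
\[
0\to\ker N\to\ker N^{m}\xrightarrow{\;N\;}\ker N^{m-1}\to0
\]
is exact on the right: given $w\in\ker N^{m-1}$, pick $u$ with $Nu=w$ by surjectivity; then $N^m u=N^{m-1}w=0$, so $u\in\ker N^m$. Thus $\dim\ker N^m=\dim\ker N^{m-1}+\dim\ker N$, and induction on $m$ gives $\dim\ker N^m=m\dim_\lambda\Gamma$. This is finite by Theorems~\ref{t:PGL2_ramified_not_at_x}, \ref{t:PGL2_ramified_at_x} and \ref{t:H^x=U_dimensions}.
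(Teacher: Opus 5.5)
Your proof is correct and follows the paper's argument essentially verbatim: you solve the finite block using invertibility of $M-\lambda$ (your choice $x_1=0$ just makes explicit the paper's remark that the first equation has a solution), propagate along the strings using $\lambda\neq 0$ and bijectivity of the $A_i$, and conclude with the short exact sequence $0\to\ker(\Phi-\lambda)\to\ker(\Phi-\lambda)^m\to\ker(\Phi-\lambda)^{m-1}\to 0$. One small slip in your commentary, which does not affect the argument: in the paper's edge computation the $x_i$-strings, where the bijections $A_i$ live, form the $s$-locus, and the $y_i$-region (all out-edges going down) is the $\mathrm{id}$-locus, not the other way around.
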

\begin{proof}
    We repeat the proof strategy from \cite[Theorem 4.33]{KZ26}. Choose $g\in\CC^\Gamma$ and write it as $(g',g_i^x,g_i^y)$ with respect to the decomposition of $\Gamma$ into $\Gamma'$, $x_i$, and $y_i$ loci. Adapting the arguments before Theorem \ref{t:PGL2_ramified_at_x}, we get that the equation
    $$
    (\Phi-\lambda)f=g
    $$
    is equivalent to the system
    \begin{itemize}
    \item $(M-\lambda)v+Ax_1+g'=0$, 
    \item $\lambda x_1+g_1^x=B_1v+A_1x_2$, $\lambda x_i+g_i^x=B_iy_{i-1}+A_ix_{i+1}$,
    \item $\lambda y_1+g_1^y=C_1v$, $\lambda y_i+g_i^y=C_iy_{i-1}$.
\end{itemize}
As before, we see that any solution of the first equation extends uniquely to a solution of the whole system. In turn, the first equation has a solution since $M-\lambda$ is assumed to be invertible.

The second part follows from the exact sequence
$$
    0\to \ker(\Phi-\lambda)\to \ker(\Phi-\lambda)^k\xrightarrow{\Phi-\lambda}\ker(\Phi-\lambda)^{k-1}\to 0
    $$
and induction, where the rightmost surjectivity was established above.
\end{proof}
}

\begin{remark}
    With the family of Eisenstein series defined in \ref{s:Eisenstein}, its differentials form a basis of the generalized eigenspace of $\Phi_x$. We refer to \cite[Section 4.5]{KZ26} for details.
\end{remark}

\begin{example}
{\rm
    Let $X=\PP^1$, $\deg x=1$, $d=1$. We notice that $T(k)\ltimes U(\cO_{d[x]})$ equals $B(k)$, and both double quotients are parametrized by two matrices: $\smat1001$ and $\smat0110$. It only remains to compute the outgoing edges of $\cO\oplus\cO$. We need to use the case $\sigma=\smat100{\pi_x}$ to produce $\cO\oplus\cO(-x)$. The above computations show that such edges must go to the $s$-locus, and hence it will be one edge of multiplicity $q$. The final graph is shown on Figure \ref{fig:P1_PGL2_ramified_at_x}.

    \begin{figure}
        \centering
        \scalebox{.9}{
        
\includegraphics{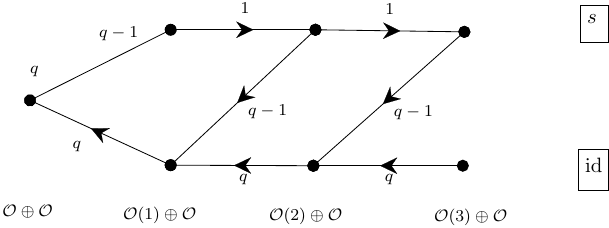}
        
        }
\caption{Ramified graph for $G=\PGL_2$ and $X=\PP^1$ at $1\cdot[x]$, $\deg x=1$, $H=U$ or $B$.}
\label{fig:P1_PGL2_ramified_at_x}
    \end{figure}
}
\end{example}

\subsection{$\PGL_2$ examples ramified at several points}
In this section, we describe explicit examples with tame $B$-ramification for several points on $\PP^1$ and $\PGL_2$. We first develop the general method for doing so.

We fix pairwise distinct points  $x,z_1,\ldots, z_g\in \PP^1(k)$ of degree $1$, and we declare the ramification divisor to be $D:=\sum_{i}[z_i]$. Therefore, $\cO_D=k^g$. The Hecke operator is considered at $x$. For simplicity of the exposition, we assume that none of these points coincides with $\infty$, which allows us to treat them as elements of $k$.

On each bundle $\cE_n:=\cO(n)\oplus \cO$, choose a trivialization of $\cO(n)$ over $D$. It induces a trivialization of $\cE_n$, under which the level structures are parametrized by
$$
B(\cO_D)\backslash\GL_2(\cO_D)/\Aut(\cE_n)_D\simeq \PP^1(k)^g/\Aut(\cE_n)_D,
$$
where $\Aut(\cE_n)_D$ denotes the image of $\Aut(\cE_n)_D$ under the restriction map
$$
\Aut(\cE_n)\to \Aut(\cE_n|_D)\simeq \GL_2(\cO_D).
$$

Since $\Aut(\cE_0)=\GL_2(k)$ and
$$
\Aut(\cE_n)=\mat{k^\times}{H^0(X,\cO(n))}{0}{k^\times}
$$
for $n\ge 1$, realizing the sections of $H^0(X,\cO(n))$ as polynomials in one variable of degree at most $n$, we can write
$$
\Aut(\cE_n)_D=T(k)\ltimes\left\{\left(\smat{1}{a_nz_i^n+\ldots+a_1z_i+a_0}{0}{1}\right)_{i=1..g}:a_j\in k\right\}\subset T(k)\ltimes U(k)^g.
$$
In particular, by a Vandermonde determinant argument for $n\ge g-1$ we get
$\Aut(\cE_n)_D=T(k)\ltimes U(k)^g$, and the Bruhat decomposition gives
$$
B(\cO_D)\backslash\GL_2(\cO_D)/\Aut(\cE_n)_D=B(\cO_D)\backslash\GL_2(\cO_D)/T(k)\ltimes U(\cO_D)\simeq \{0,\infty\}^g.
$$

In general, since the level structures are represented by points in $\PP^1(k)^g$, it is convenient to describe Hecke modifications in this language. Recall that the isomorphism of $B\backslash\GL_2$ with $\PP^1$ is done via
$$
Ba\mapsto (0,1)\cdot a,
$$
and we treat $[0:1]$ as the infinity point. If we use an affine coordinate $v$ to represent a point $[1:v]\in\PP^1(k)$, then the action of $\Aut(\cE_n)_D$ identifies
\begin{equation}\label{eq:automorphisms_action_on_P^1}
    (v_1,\ldots,v_n)\sim (\lambda v_1+f(z_1),\ldots, \lambda v_n+f(z_n)) 
\end{equation}
for $\lambda\in k^\times$ and
$$
f(z)\in k\oplus kz\oplus\ldots\oplus kz^n\subset k[z].
$$
This formula also applies when some of $v_i=\infty$ if we declare that $\lambda \infty+\mu=\infty$ for all $\lambda\in k^\times$ and $\mu\in k$.

Recall that Hecke modifications are represented by inclusion matrices
$$
\sigma=\mat{\pi_x}{c}01\text{ or }\mat{1}{0}{0}{\pi_x},\qquad c\in k.
$$
At $z\in\PP^1\setminus x$, these matrices take the form
$$
\sigma=\mat{z-x}{c}01\text{ or }\mat{1}{0}{0}{z-x},\qquad c\in k,
$$
transporting a level structure $[v_1:v_2]$ at $z$ to $[v_1:v_2]\sigma$, i.e. to
$$
[(z-x)v_1:cv_1+v_2]\text{ or }[v_1:(z-x)v_2],\qquad c\in k.
$$
If we just use the affine coordinate $[1:v]$ for the level structure ($v=v_2/v_1$), it transports to
$$
\frac{v+c}{z-x}\quad\text{ or }\quad(z-x)v,\qquad c\in k.
$$
Note that $\infty$ is connected to $\infty$ and $\AA^1$ is connected to $\AA^1$.

In our setting, the level structures $(v_1,\ldots, v_g)$ at $z_1,\ldots,z_g$ transport to
$$
\left(\frac{v_i+c}{z_i-x}\right)_{i=1..g}\quad\text{ or }\quad\big((z_i-x)v_i\big)_{i=1..g},\qquad c\in k.
$$
For $l=0\ldots g$, define $V_l$ to be the set of points in $\PP^1(k)^g$ with exactly $k$ coordinates equal to $\infty$. These subsets are clearly preserved by $T(k)\ltimes U(k)^g$, hence their projections onto $\PP^1(k)^g/\Aut(\cE_n)_D$ do not intersect. By the above description of arrows, elements of $V_l$ are connected only to elements of $V_l$, unless the target is $\cE_0$. Schematically, the graph looks like on Figure \ref{fig:B-ramif_at_many_points}.

\begin{figure}
        \centering
        \scalebox{.8}{
            \includegraphics{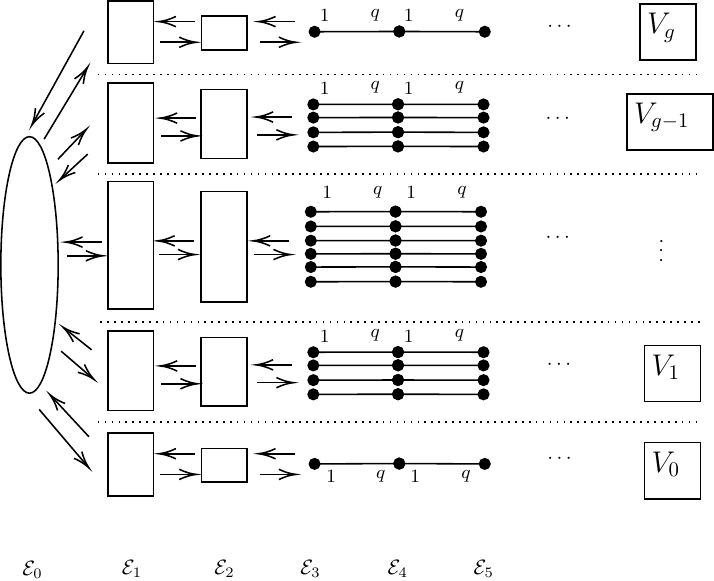}
        }
\caption{$B$-ramified graph for $G=\PGL_2$ and $X=\PP^1$ at $D=\sum_i[z_i]$, $\deg x=\deg z_i=1$. Example for $g=4$.}
\label{fig:B-ramif_at_many_points}
    \end{figure}

Now, we describe Hecke modifications of level structures on $\cE_0$. Hecke modifications of the bundle are given by inclusions $\cO\oplus\cO(-1)\to\cO\oplus\cO$ represented by inclusion matrices
$$
\sigma=\mat{1}{0}c{\pi_x}\text{ or }\mat{0}{\pi_x}{1}{0},\qquad c\in k
$$
transporting level structures $(v_1,\ldots, v_g)$ at $z_1,\ldots,z_g$ to
$$
\left(\frac{z_i-x}{v_i^{-1}+c}\right)_{i=1..g}\quad\text{ or }\quad\left(\frac{z_i-x}{v_i}\right)_{i=1..g},\qquad c\in k,
$$
where we use the convention $\infty^{-1}=0$.

With these preparations, we are ready to compute small examples. We start with $g=1,2$. In this case, the cusp locus starts with $\cE_1$, so we only need to compute edges involving $\cE_0$.

\begin{example}[Case $g=1$]
    {\rm
    In this case, all level structures on $\cE_1$ are equivalent, so the edges going from $\cE_1$ to $\cE_0$ are all of degree $q$. Without loss of generality, we can assume that the level structure on $\cE_0$ is given by $0\in\PP^1(k)$. By the above computation, it is connected to the level structure $0$ with multiplicity $q$ and to the level structure $\infty$ with multiplicity $1$. The graph is presented on Figure \ref{fig:B-ramif_at_1_point}

\begin{figure}
        \centering
        \scalebox{.9}{
        \includegraphics{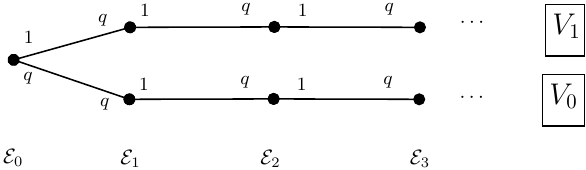}
        }
\caption{$B$-ramified graph for $G=\PGL_2$ and $X=\PP^1$ at $g=1$ point $z_1$, $\deg x=\deg z_1=1$.}
\label{fig:B-ramif_at_1_point}
    \end{figure}

    }
\end{example}

\begin{example}[Case $g=2$]
{\rm
In this case, we have two equivalence classes of level structures on $\cE_0$: $(0,0)$ and $(0,\infty)$. The orbit of the first (second) structure is precisely collections of two coinciding (distinct) points on $\cP^1(k)$. We start with computing the edges from $\cE_1$ to $\cE_0$. For simplicity, we may assume that $x=0$ in the above formulas. A level structure $(v_1,v_2)\in \{0,\infty\}^2$ on $\cE_1$ is then connected to
$$
\left(\frac{v_1+c}{z_1},\frac{v_2+c}{z_2}\right).
$$
It equals the $(0,0)$ level structure if and only if
$$
\frac{v_1+c}{z_1}=\frac{v_2+c}{z_2}.
$$
This is true for all $c\in k$ if $(v_1,v_2)=(\infty,\infty)$, has exactly one solution in $c$ for $(v_1,v_2)=(0,0)$ and has no solutions otherwise. The rest of the edges (recall that the number of edges from a level structure on $\cE_1$ to level structures on $\cE_0$ equals $q$) go into $(0,\infty)$.

Now, we compute edges from $\cE_0$ to $\cE_1$. Using the above formula, the edges from $(0,0)$ go to  $(0,0)$ with multiplicity $q$ and to $(\infty,\infty)$ with multiplicity $1$. The edges from $(0,\infty)$ go to $(0,z_1/c)$ with multiplicity $1$ for each $c\in k$ and to $(\infty,0)$ with multiplicity $1$. However, using (\ref{eq:automorphisms_action_on_P^1}), we see that the points $(0,z_1/c)$ are identified with $(0,0)$ for $c\ne 0$ and with $(0,\infty)$ for $c=0$. The graph is shown on Figure \ref{fig:B-ramif_at_2_points}.

\begin{figure}
        \centering
        \scalebox{.9}{
            \includegraphics{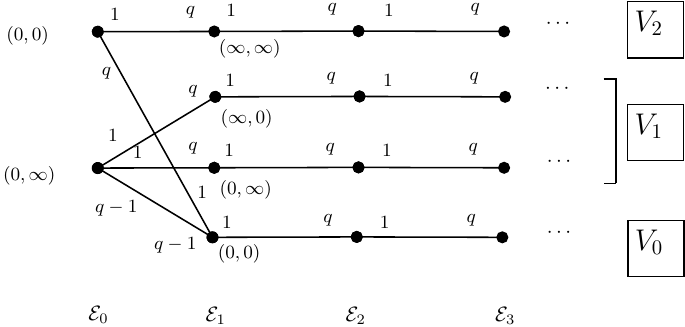}

        }
\caption{$B$-ramified graph for $G=\PGL_2$ and $X=\PP^1$ at $g=2$ points $z_i$, $\deg x=\deg z_i=1$.}
\label{fig:B-ramif_at_2_points}
    \end{figure}

\begin{figure}
        \centering
        \scalebox{.9}{
        
    \includegraphics{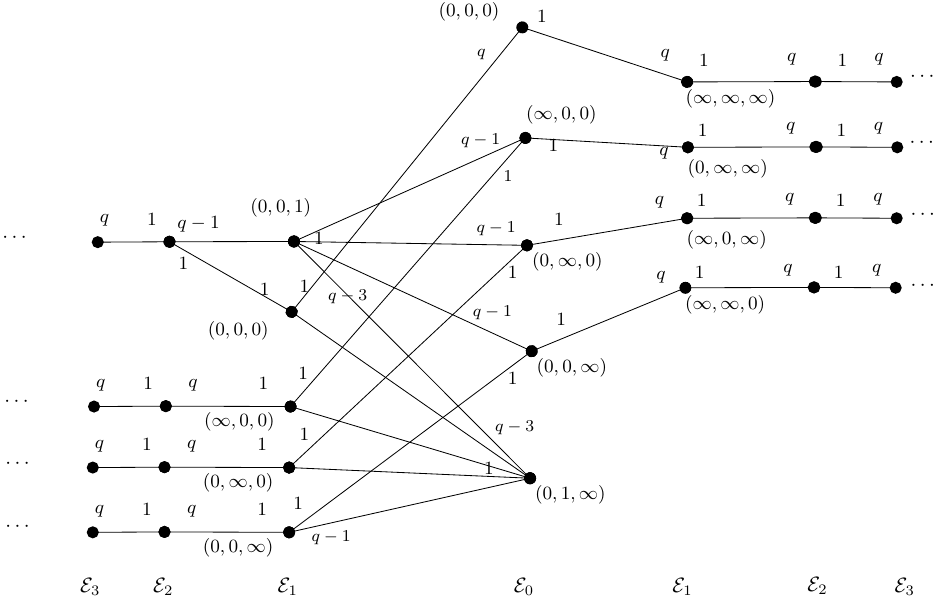}

        }
\caption{$B$-ramified graph for $G=\PGL_2$ and $X=\PP^1$ at $g=3$ points $z_i$, $\deg x=\deg z_i=1$. Some of edge multiplicities are not shown for readability and can be recovered from the fact that the outgoing degree of each vertex is $q+1$.}
\label{fig:B-ramif_at_3_points}
    \end{figure}
}
\end{example}

\begin{example}[Case $g=3$]
{\rm
Here, the cusp starts from $\cE_2$, so we expect more than $8$ level structures on $\cE_1$. Let $(v_1,v_2,v_3)\in\PP^1(k)$ represent a level structure. First, assume that it lies in $V$, i.e. at least one of $v_i$ equals $\infty$. Without loss of generality, assume that $v_1=\infty$. Then (\ref{eq:automorphisms_action_on_P^1}) tells that
$$
(\infty,v_2,v_3)\sim (\infty,\lambda v_2+f(z_2),\lambda v_3+f(z_3))
$$
for $\lambda\in k^\times$ and $f$ a linear polynomial. Using Vandermonde determinant argument (or simple manual calculation), we see that any such level structure can be brought to the form $\{0,\infty\}^3$. This gives $7$ orbits in $V$, as before.

Now, assume that all $v_i$ are finite, i.e. the level structure lies in $U$. We claim that there are precisely two level structures in this case: $(0,0,0)$ and $(0,0,1)$. Indeed, for $a,b\in k$ and $\lambda\in k^\times$,
\begin{align*}
    (0,0,1)&\sim (az_1+b,az_2+b,\lambda+az_3+b),\\
    (0,0,0)&\sim (az_1+b,az_2+b,az_3+b).
\end{align*}
We can easily see that if we allow $\lambda$ to be $0$ as well, then we can get any element of $U$ from $(0,0,1)$. Moreover, the orbit of $(0,0,0)$ is precisely the case $\lambda=0$, so these two orbits cover the entire $U$. It remains to prove that they do not intersect. For this, fix a non-zero triple of elements $(r_1,r_2,r_3) \in k^3$ such that 
$$
\sum_i r_i=\sum_ir_iz_i=0.
$$
Such a triple is unique up to simultaneous rescaling by $k^\times$. Then we can easily see that
\begin{align*}
    (0,0,1)\text{-orbit}&=\left\{(v_1,v_2,v_3):\sum_i r_i v_i\ne0\right\},\\
    (0,0,0)\text{-orbit}&=\left\{(v_1,v_2,v_3):\sum_i r_i v_i=0\right\}.
\end{align*}
This finishes the proof and gives a nice criterion to check which of the classes our level structure lies in. In summary, we have $9$ level structures: $7$ usual ones in $V$ and $(0,0,0)$, $(0,0,1)$ in $U$.

We may assume that $x=0$. To compute maps from $\cE_2$ to $\cE_1$, choose a level structure $(v_1,v_2,v_3)\in \{0,\infty\}^3$ on $\cE_2$. The targets of edges from it are
$$
\left(\frac{v_i+c}{z_i}\right)_{i=1..3},\qquad c\in k.
$$
If at least one of $v_i=\infty$, the target lands in $V$, and therefore there will be a single edge of multiplicity $q$.

If $(v_1,v_2,v_3)=(0,0,0)$, the targets are
$$
\left(\frac{c}{z_1},\frac{c}{z_2},\frac{c}{z_3}\right),\qquad c\in k.
$$
Since the matrix 
$$
\begin{pmatrix}
    1&1&1\\
    z_1&z_2&z_3\\
    1/z_1&1/z_2&1/z_3
\end{pmatrix}
$$
is invertible, it has no non-trivial kernel $(r_1,r_2,r_3)$ as above. Therefore, there will be precisely $q-1$ edges to $(0,0,1)$ (case $c\ne 0$) and $1$ edge to $(0,0,0)$ (case $c=0$).

Now, we compute edges from $\cE_1$. We start with edges to $\cE_2$. Each level structure $(v_i)$ has precisely one edge to $\cE_2$ with target
$$
\left(z_1v_1,z_2v_2,z_3v_3\right).
$$
We can easily see that the graph looks precisely like in the cusp locus for vertices in $V$, and both vertices $(0,0,0)$ and $(0,0,1)$ have one edge of multiplicity $1$ to $(0,0,0)$.

Now, we compute edges from $\cE_1$ to $\cE_0$. For this, we need to classify level structures on $\cE_0$. Since any three distinct points on $\PP^1(k)$ can be moved to any other three distinct points on $\PP^1(k)$ by the $\PGL_2(k)$-action, the level structures on $\cE_0$ are completely classified by which points coincide. We choose representatives in the following way:
$$
(0,0,0),(0,0,\infty),(0,\infty,0),(\infty,0,0), (0,1,\infty).
$$
So, we need to study which of the coordinates in 
$$
\left(\frac{v_1+c}{z_1},\frac{v_2+c}{z_2},\frac{v_3+c}{z_3}\right),\qquad c\in k.
$$
coincide for a level structure $(v_i)$ on $\cE_1$. We have the following cases:
\begin{itemize}
    \item $(0,0,0)$: this gives pairwise distinct coordinates for $c\ne 0$ and equal coordinates for $c=0$.
    \item $(0,0,1)$: for three values of $c$, this gives the three cases when two out of three coordinates are equal. For the rest of $c$, the coordinates are pairwise distinct.
    \item $(\infty,0,0)$ and permutations: this gives pairwise distinct coordinates for $c\ne 0$ and the last two coordinates equal for $c=0$.
    \item $(\infty,\infty,0)$ and permutations: this gives only the first two coordinates equal for any $c$.
    \item $(\infty,\infty,\infty)$: all coordinates are equal for any $c$.
\end{itemize}

It remains to compute edges from the $5$ level structures on $\cE_0$. Recall that to $(v_1,v_2,v_3)$ we connect
$$
\left(\frac{z_1}{v_1^{-1}+c},\frac{z_2}{v_2^{-1}+c},\frac{z_3}{v_3^{-1}+c}\right)\quad\text{ or }\quad\left(\frac{z_1}{v_1},\frac{z_2}{v_2},\frac{z_3}{v_3}\right),\qquad c\in k,
$$
\begin{itemize}
    \item $(0,0,0)$: we get $(0,0,0)$ for any $c\in k$ or $(\infty,\infty,\infty)$.
    \item $(0,0,\infty)$ and permutations: we get $(0,0,z_3/c)$ for any $c\in k$ or $(\infty,\infty,0)$. Note that $(0,0,z_3/c)\sim (0,0,1)$ if $c\ne 0$ and $(0,0,z_3/c)=(0,0,\infty)$ if $c=0$.
    \item $(0,1,\infty)$: we get $(0,z_2/(1+c),z_3/c)$ for any $c\in k$ or $(\infty,z_2,0)\sim (\infty,0,0)$. We can easily see that the condition $(0,z_2/(1+c),z_3/c)\sim (0,0,0)$ is a linear equation on $c$ having a unique solution $c\ne 0,-1$. In summary, this case gives one edge to each of $(0,0,0)$, $(0,\infty,0)$, $(0,0,\infty)$, and $(\infty,0,0)$, and $q-3$ edges to $(0,0,1)$.
\end{itemize}

Summing everything up, we get a graph as on Figure \ref{fig:B-ramif_at_3_points}.
}
\end{example}

\begin{example}[$T$ at $x$]
    Here, we give an example for $X=\PP^1$, $G=\PGL_2$, $H^x=T$. By Remarks \ref{r:strengthening_of_general_connections} and \ref{r:change_of_ramification}, this graph is obtained by identifying vertices and keeping edges in the example of $\operatorname{id}$-ramification at $x$. This example was computed in \cite[Example 4.31]{KZ26} and is presented on Figure \ref{fig:PGL2_P1_ramified_example}.

     Thus, we need to understand which vertices need to be identified. The level structures of the $T$-ramified graph are given by
    $$
    T(k)\backslash G(k)/B(k)=T(k)\backslash \PP^1(k)=\{0,1,\infty\},
    $$
    which means that the points in the $\PP^1(k)\setminus\{0,\infty\}$ on Figure \ref{fig:PGL2_P1_ramified_example} need to be identified. The desired graph is presented on Figure \ref{fig:PGL2_P1_T_ramified_example}.
    \begin{figure}
\centering
\scalebox{.9}{
\includegraphics{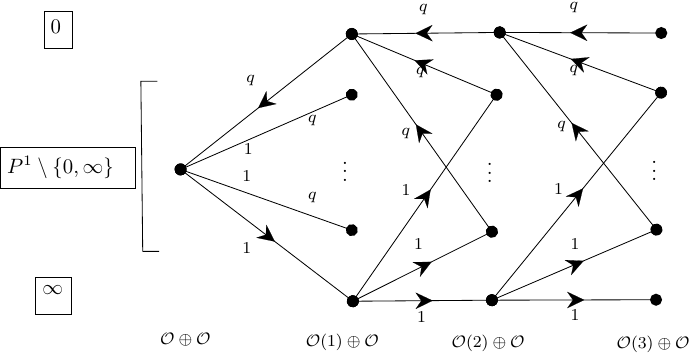}
}
\caption{Ramified graph for $G=\PGL_2$ and $X=\PP^1$ at $1\cdot[x]$, $\deg x=1$.}
\label{fig:PGL2_P1_ramified_example}
\end{figure}

\begin{figure}
\centering
\scalebox{.9}{
\includegraphics{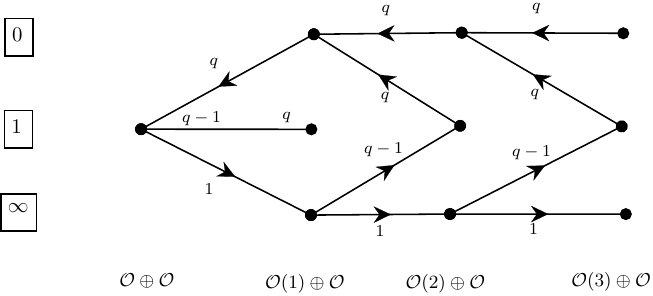}
}
\caption{Ramified graph for $G=\PGL_2$ and $X=\PP^1$ at $1\cdot[x]$, $\deg x=1$ with $H^x=T$.}
\label{fig:PGL2_P1_T_ramified_example}
\end{figure}

\end{example}

\begin{example}[$T$ at $x=0$ and $U$ at $y=1$]
    {\rm
    \begin{figure}
\centering
\scalebox{.8}{

\includegraphics{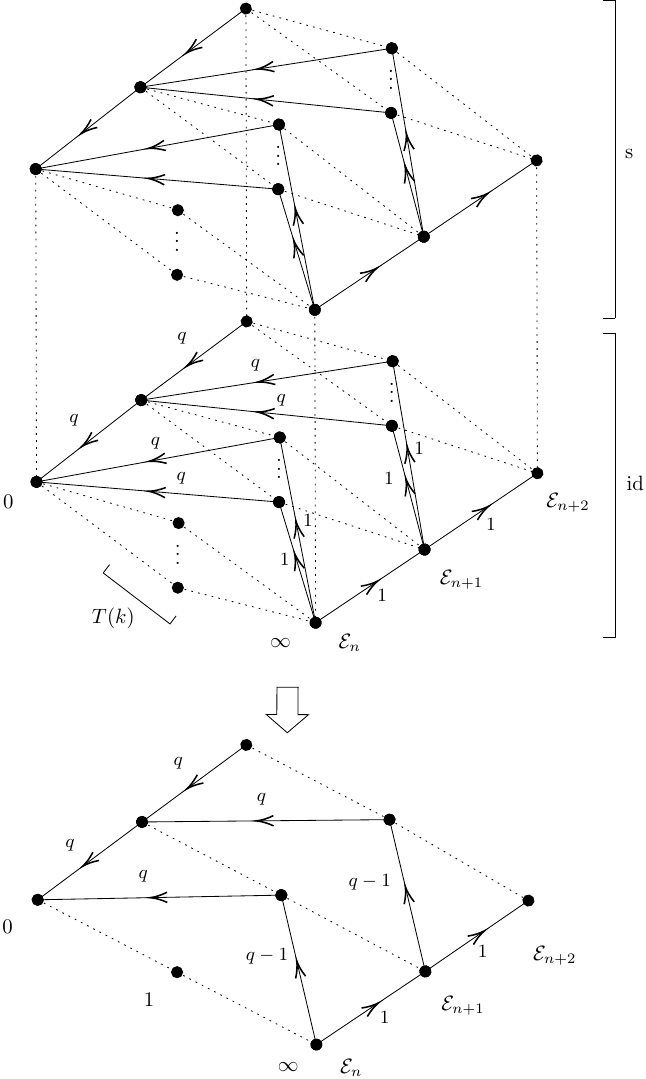}

}
\caption{Cusp of the ramified graph for $G=\PGL_2$ and $X=\PP^1$ at $[x]+[y]$ with $H^x=T$ and $H^y=U$. The projection to the $T$-ramified graph at $[x]$ is shown. Punctured lines are drawn to improve visualization by grouping level structures over single bundles. Edge multiplicities of the $s$-part are the same as of the $\mathrm{id}$-part.}
\label{fig:PGL2_P1_T_ramified_at_x_and_y}
\end{figure}
    Here, we give an example for $X=\PP^1$, $G=\PGL_2$, $x=0$, $y=1$, $H^x=T$ and $H^y=U$. This will be a counterexample to \ref{t:covering_map_GL} in the case when the regularity condition is not satisfied.

    By Remarks \ref{r:strengthening_of_general_connections} and \ref{r:change_of_ramification}, this graph is obtained by identifying vertices and keeping edges in the example of $\operatorname{id}$-ramification at $x$ and $y$. This example was computed at the cusp in \cite[Example 4.32]{KZ26}, and all connections are summarized in
    \begin{align*}
    (a,\infty)&\longrightarrow (a,\infty),\\
    (a,\infty)&\longrightarrow (c,at_{-c^2}),\\
    (a,p)&\longrightarrow (a,0),\quad p\in \PP^1(k)\setminus\infty,
\end{align*}
    where $a\in G/U$ represents the level structure over $y$ and $t_c:=\smat{c}{0}{0}{1}$.

    To understand which vertices need to be identified, we compute the fibers over $\{0,1,\infty\}\in T\backslash G/B$ in 
    $$
    (U\backslash G/U\times T\backslash G/U)/T.
    $$
    Note that we can identify $T\backslash G/U$ with $\PP^1(k)$ and the right $T$-action with the scaling action. Then the points $0$ and $\infty$ are stabilized and $T$ acts on $\PP^1\setminus \{0,\infty\}$ freely. Therefore, the fibers are
    \begin{align*}
        p^{-1}(0)&\simeq U\backslash G/B\simeq \{\operatorname{id},s\},\\
        p^{-1}(\infty)&\simeq U\backslash G/B\simeq \{\operatorname{id},s\},\\
        p^{-1}(1)&\simeq U\backslash G/U\simeq \{\operatorname{id},s\}\times T(k).\\
    \end{align*}
    This gives the following connections:
    \begin{align*}
    (a,\infty)&\longrightarrow (a,\infty),\\
    (a,\infty)&\longrightarrow (c,at_{-c^2})\sim(1,at_{-c}),\\
    (a,p)&\longrightarrow (a,0),\quad p\in \{0,1\}.
\end{align*}

The final graph is shown in Figure \ref{fig:PGL2_P1_T_ramified_at_x_and_y}. We can easily see that in general, there is no control over ingoing edges: For example, we see that above $q$ edges between vertices of levels $1$ and $0$ in the graph downstairs lie $q(q-1)$ edges to a given lift of the level $0$ vertex, while above $q-1$ edges  between vertices of levels $\infty$ and $1$ in the graph downstairs lies only one edge to a given lift of the level $1$ vertex. In particular, the projection map is not a topological covering.
   
    }
\end{example}

\section{Connections to the theory of Eisenstein series for $\PGL_2$}\label{s:Eisenstein}
We connect our findings to the known theory of Eisenstein series for $\PGL_2$. We will follow \cite{Li79}, where the dimensions of spaces of Eisenstein series were computed.

First of all, we need to recall the necessary preliminaries. Fix a finite index subgroup $K'$ of $K=G(\cO)$. Let $V:=\CC[K/K']$ be the set of functions on $K/K'$. This vector space has a linear right action of $K$ induced from the standard action on $K/K'$. Recall the theorem of Harder:
\begin{theorem}\label{t:Harder_iso}
    There is an isomorphism of vector spaces
    \begin{equation}\label{eq:Harder_iso}
        \operatorname{Fun}(G(F)\backslash G(\AA),\CC[K/K'])^K\simeq \operatorname{Fun}(G(F)\backslash G(\AA),\CC)^{K'}=\operatorname{Fun}(G(F)\backslash G(\AA)/K',\CC).
    \end{equation}
\end{theorem}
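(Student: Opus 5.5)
The plan is to write down both maps in (\ref{eq:Harder_iso}) explicitly and check that they are mutually inverse. Here $K$ acts on $\CC[K/K']$ on the right. A function $\Psi\in\operatorname{Fun}(G(F)\backslash G(\AA),\CC[K/K'])^K$ is understood to satisfy the equivariance condition
$$
\Psi(gk)=\Psi(g)\cdot k,\qquad g\in G(\AA),\ k\in K,
$$
where $(\varphi\cdot k)(\xi)=\varphi(k\xi)$ for $\xi\in K/K'$. I will fix this convention first; any other choice only changes the formulas by $k\mapsto k^{-1}$.

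\textbf{The two maps.}
\begin{itemize}
\item Define $\alpha(\Psi)(g):=\Psi(g)(eK')$, evaluation at the identity coset.
\item Conversely, for $f$ a right $K'$-invariant function on $G(F)\backslash G(\AA)$, define $\beta(f)(g)(kK'):=f(gk)$.
\end{itemize}

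\textbf{Checks, in order.}
\begin{itemize}
\item $\beta(f)(g)$ is well defined on $K/K'$ because $f$ is right $K'$-invariant. It is left $G(F)$-invariant because $f$ is.
\item $\beta(f)$ is $K$-equivariant: $\beta(f)(gk_0)(kK')=f(gk_0k)=\beta(f)(g)(k_0kK')$, which is exactly $(\beta(f)(g)\cdot k_0)(kK')$.
\item $\alpha(\Psi)$ is right $K'$-invariant: for $k'\in K'$, $\Psi(gk')(eK')=\Psi(g)(k'K')=\Psi(g)(eK')$. It is also left $G(F)$-invariant.
\end{itemize}

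\textbf{Mutually inverse.}
\begin{itemize}
\item $\alpha\beta(f)(g)=f(g)$ immediately.
\item $\beta\alpha(\Psi)(g)(kK')=\Psi(gk)(eK')=\Psi(g)(kK')$ by equivariance.
\end{itemize}
Both maps are clearly linear. The final equality in (\ref{eq:Harder_iso}) is just the definition of functions on the double quotient.

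The only real obstacle is conventions: the direction of the $K$-action on $\CC[K/K']$ and the meaning of ``$K$-invariants''. I would fix these as above so that the equivariance identity used in the last step holds on the nose. If the paper intends $V$ with the right regular action $(\varphi\cdot k)(\xi)=\varphi(k\xi)$, the argument goes through verbatim; otherwise one replaces $k$ by $k^{-1}$ throughout. No finiteness of $K/K'$ is needed beyond making $V$ finite-dimensional.
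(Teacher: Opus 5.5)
Your proof is correct and follows the same route as the paper: your $\alpha$ is the paper's projection $\pi$ onto the $\mathbbm 1_{K'}$-coefficient, and your $\beta(f)(g)=\sum_{kK'\in K/K'}f(gk)\mathbbm 1_{kK'}$ is exactly the paper's $f'$. Beyond that, you fix the $K$-action convention explicitly and carry out the equivariance and inverse checks, which the paper leaves as ``easy to see''.
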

\begin{proof}
    The isomorphisms are constructed in the following way. Let $\mathbbm 1_{gK'}\in\CC[K/K']$ be the characteristic function of the coset $gK'$. These elements form a basis of $\CC[K/K']$, which gives a projection operator
    $$
    \pi:\CC[K/K']\to \CC\mathbbm 1_{K'}=\CC.
    $$
    
    To $f\in \operatorname{Fun}(G(F)\backslash G(\AA),\CC[K/K'])^K$, we associate the function $\pi\circ f\in \operatorname{Fun}(G(F)\backslash G(\AA),\CC)^{K'}$. In another direction, to a function  $f\in \operatorname{Fun}(G(F)\backslash G(\AA),\CC)^{K'}$, we associate the function $f'\in \operatorname{Fun}(G(F)\backslash G(\AA),\CC[K/K'])^K$ defined by
    $$
    f'(x):=\sum_{gK'\in K/K'}f(xg)\mathbbm 1_{gK'}.
    $$
    It is easy to see that these two maps are mutually inverse.
\end{proof}
Therefore, we can think of ramified automorphic forms as unramified vector-valued automorphic forms. This is the language that we are going to utilize.

 Pick a point $x\in |X|$ such that $K'_x=K_x$. In other words, we assume no ramification at $x$. The Hecke operator $\Phi_x$ acts on both sides of \ref{eq:Harder_iso} by
$$
(\Phi_x f)(g)=\int_{K_x\Delta_x K_x}f(gh_x)\,d\mu(h_x).
$$
We can easily see that the isomorphism \ref{eq:Harder_iso} commutes with these actions.


As in \cite{Li79}, define the spaces of invariants
\begin{align*}
    V^U&:=V^{U(\cO)}\simeq \CC[U(\cO)\backslash K/K'],\\
    V^U_0&:=V^{T(\FF_q)U(\cO)}\simeq \CC[T(\FF_q)U(\cO)\backslash K/K'].
\end{align*}
The action of $T(\cO)$ ($T(\cO)/T(\FF_q)$) on $V^U$ ($V^U_0$) is semisimple, which gives a decomposition
$$
V^U=\bigoplus_{\chi:T(\cO)\to \CC^\times}V_\chi^U\qquad \left(V^U_0=\bigoplus_{\chi:T(\cO)/T(\FF_q)\to \CC^\times}V_\chi^U\right),
$$
where 
$$
V_\chi^U:=\{v\in V^U:\forall_{t\in T(\cO)}\, t.v=\chi(t)v\}.
$$

Fix a character $\chi:T(\FF_q)\backslash T(\cO)\to \CC^\times$. Define the space
$$
W_\chi:=\{\eta\in\operatorname{Fun}(T(F)\backslash T(\AA),\CC):\forall_{t\in T(\cO),g\in T(\AA)}\, \eta(gt)=\chi(t)\eta(g)\}\simeq \operatorname{Ind}_{T(\FF_q)\backslash T(\cO)}^{T(F)\backslash T(\AA)}\CC_\chi,
$$
where $\CC_\chi$ is the one-dimensional representation of $T(\cO)$ associated with $\chi$. Because of the presentation as the induced representation, we can find a vector space isomorphism
\begin{equation}\label{eq:W_chi_dimension}
W_\chi\simeq \operatorname{Fun}(T(F)\backslash T(\AA)/T(\cO),\CC).
\end{equation}

For such $\chi$ and $\eta\in W_\chi$, choose $f\in V_\chi^U$ and define the associated {\bfseries Eisenstein series} $E(-;\eta,f)$ in the following way:
\begin{itemize}
    \item Identifying $T(F)\backslash T(\AA)$ with $B(F)U(\AA)\backslash B(\AA)$ and $\CC$ with $\CC f\in V_{\chi}^U$, we view $\eta$ as a function from the latter set to $V_{\chi}^U$.
    \item Using the Iwasawa decomposition $G(\AA)=B(\AA)K$, we extend $f$ to a function
    $$
    \tilde\eta:B(F)\backslash G(\AA)/K\to V^U
    $$
    by
    $$
    \tilde\eta(bgkk'):=k.\eta(g),\qquad b\in B(F),\,g\in B(\AA), k\in K,\, k'\in K'.
    $$
    \item We set
    $$
    E(g;\eta,f):=\sum_{a\in B(F)\backslash G(F)}\tilde\eta(ag).
    $$
\end{itemize}
\begin{proposition}[\cite{Li79}, Proposition 2.1]
    The above construction of $\tilde\eta$ is well-defined.
\end{proposition}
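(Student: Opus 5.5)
We have to show that the formula $\tilde\eta(bgkk') := k.\eta(g)$ gives the same value no matter how a point of $G(\AA)$ is written in the form $bgkk'$. Every point admits such a form by the Iwasawa decomposition $G(\AA)=B(\AA)K$. So the only task is to check that two factorizations $b_1g_1k_1k_1'=b_2g_2k_2k_2'$ produce the same vector. The factor $b\in B(F)$ is handled by the convention already in place: $\eta$ is viewed on $B(F)U(\AA)\backslash B(\AA)\simeq T(F)\backslash T(\AA)$, so left multiplication by $B(F)$ can be absorbed into $g$. We may therefore assume $b_1=b_2=1$.

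The first step is to determine how $(g_1,k_1k_1')$ and $(g_2,k_2k_2')$ can differ. From $g_1k_1k_1'=g_2k_2k_2'$ we get $g_2^{-1}g_1=k_2k_2'(k_1k_1')^{-1}\in B(\AA)\cap K=B(\cO)$. Write $\beta:=g_2^{-1}g_1=t u$ with $t\in T(\cO)$ and $u\in U(\cO)$. Then $g_1=g_2 tu$ and $k_2k_2'=\beta k_1k_1'$.

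The second step is to compare the two values. By the $T(\cO)$-equivariance in the definition of $W_\chi$, and because $\eta$ only sees the image of $g$ in $U(\AA)\backslash B(\AA)$, we get $\eta(g_1)=\eta(g_2tu)=\chi(t)\eta(g_2)$. This is really $\chi(t)$ times the line $\CC f$. Since $f\in V^U_\chi$, we have $\chi(t)f=t.f$ and $u.f=f$, hence $\chi(t)f=\beta.f$. The value coming from the first factorization is then $k_1.(\beta^{-1}$-twisted$)$, which must be matched against $k_2.\eta(g_2)$.

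The main obstacle is that the $K'$-factor is not controlled by this comparison. Since $K'$ is not normal in $K$, the element $k$ itself is not determined by the point, only its coset $kK'$. So one must show that $k.v$ depends only on $kK'$ for the relevant vectors $v$. Here the identity $V=\CC[K/K']$ is what we would use: the action is the right action on functions on $K/K'$, and therefore $k'$ acts trivially on the image. More precisely, I would reinterpret the definition as $\tilde\eta(gk)=k^{-1}$ applied via the $K$-action, so that invariance under right $K'$ follows from the fact that $f\in V$ is a function on $K/K'$. Concretely, I would consider $\tilde\eta(gk)$ as the function $k''K'\mapsto \eta(g)(kk''K')$ and check the cocycle identity $\tilde\eta(g\beta k)=\tilde\eta(g\,\beta k)$ using $\beta.f=\chi(t)f$.

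Once the action convention is fixed so that $\tilde\eta(xk)=k^{-1}.\tilde\eta(x)$ (that is, $\tilde\eta$ is $K$-equivariant), the verification collapses to the single identity $\eta(g\beta)=\beta^{-1}.\eta(g)$ for $\beta\in B(\cO)$. This identity holds precisely because $f$ lies in the $\chi$-eigenspace for $T(\cO)$ and is $U(\cO)$-invariant. Pinning down these left/right conventions, following \cite{Li79}, is the step I expect to need the most care.
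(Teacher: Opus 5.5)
The paper gives no argument beyond the citation to \cite{Li79}, and your reduction is the right one. After absorbing $B(F)$ into $g$, two factorizations $g_1k_1=g_2k_2$ differ by $\beta=g_2^{-1}g_1=k_2k_1^{-1}\in B(\AA)\cap K=B(\cO)$. So well-definedness is exactly the compatibility of the $B(\cO)$-behaviour of $\eta$ with that of $f$. Two steps, however, are wrong as written.

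\textbf{The role of $K'$.} $K'$ does not act trivially on $V=\CC[K/K']$ unless $K'$ is normal in $K$, and $K_\cH(D)$ usually is not (e.g.\ $H^y=B$). For the right action $(v\cdot k)(hK')=v(khK')$, the vector $v\cdot k'$ is $hK'\mapsto v(k'hK')$. Hence the literal recipe $\tilde\eta(bgkk')=k.\eta(g)$ cannot be proved well-defined: rewriting $gk=g(kk'^{-1})k'$ produces the generally different value $\eta(g)\cdot kk'^{-1}$. Likewise the $V$-valued $\tilde\eta$ is not right $K'$-invariant.

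The correct reading, which your last paragraph reaches, drops the $K'$-factor. Set $\tilde\eta(bgk):=\eta(g)\cdot k$ for all $k\in K$; this is a $K$-equivariant map to $V$, not a map on $G(\AA)/K$ to $V^U$ as the paper's notation suggests. $K'$ enters only through Theorem~\ref{t:Harder_iso}, where the scalar function $x\mapsto\tilde\eta(x)(K')$ is right $K'$-invariant because $(v\cdot kk')(K')=v(kk'K')=v(kK')$. Your ``cocycle identity'' $\tilde\eta(g\beta k)=\tilde\eta(g\,\beta k)$ is a tautology and checks nothing.

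\textbf{The action convention.} You cannot defer the left/right convention. This single check is the whole content of the proposition, and the convention decides whether it holds. Your two formulas $\beta.f=\chi(t)f$ and $\eta(g\beta)=\beta^{-1}.\eta(g)$ are incompatible for any single action. The first forces $\beta^{-1}.f=\chi(t)^{-1}f$, while $\eta\in W_\chi$ gives $\eta(g\beta)=\chi(t)\eta(g)$. With a left action and $\tilde\eta(xk)=k^{-1}.\tilde\eta(x)$, you would need $f$ in the $\chi^{-1}$-eigenspace.

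Use the paper's right action throughout, with $V^U_\chi$ defined by $f\cdot t=\chi(t)f$ and $f\cdot u=f$. Then for $\beta=tu$ the check closes in one line:
\[
\eta(g_1)\cdot k_1=\chi(t)\,\eta(g_2)\cdot k_1=\bigl(\eta(g_2)\cdot\beta\bigr)\cdot k_1=\eta(g_2)\cdot(\beta k_1)=\eta(g_2)\cdot k_2 .
\]
With these two corrections your outline becomes a complete proof.
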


Now, we need to argue about the well-definiteness of the Eisenstein series. For this, we study the space $W_\chi$ a bit closer. First of all, there is a degree function
\begin{align*}
 \deg:T(\AA)&\to \ZZ,\\
 t&\mapsto \sum_{y\in |Y|}\deg(y)\operatorname{val}_{\pi_y}(t_y)
\end{align*}
We denote the kernel of this map by $T(\AA)_0$. Therefore, we have an exact sequence
$$
0\to T(\AA)_0\to T(\AA)\xrightarrow{\deg}k\ZZ\to 0
$$
for some $k\in\ZZ_{>0}$ ($k=1$ when the curve $X$ has a point over $\FF_q$). We can split this sequence by choosing an element in $T(\AA)$ of degree $k$. We will fix such a splitting. This also gives rise to a splitting
$$
T(F)\backslash T(\AA)\simeq T(F)\backslash T(\AA)_0\oplus\ZZ.
$$

In particular, since $T(\cO)\subset T(\AA)_0$, any $\eta$ in $W_\chi$ has a decomposition $(\eta_0,\tau)$, where $\eta_0$ is a function on $T(F)\backslash T(\AA)_0$ and $\tau\in\CC$. We will write $\tau=q^{-s}=e^{-s\log q}$ for $s\in\CC$, $\eta_{0,s}$ for $(\eta_0,s)$, and $E_s(-;\eta_0,f)$ for $E(-;\eta_{0,s},f)$. We have the following:

\begin{theorem}[\cite{Li79}, Theorem 2.3]
    Assume that $\mathrm{Re}(s)>1$. Then the sum defining $E_s(-;\eta_0,f)$ converges to a holomorphic function in $s$. Moreover,
    $$
    \Phi_xE_s(-;\eta_0,f)=(\eta_{0,s}(\pi_x)+q^{\deg x}\eta_{0,s}(\pi_x^{-1}))  E_s(-;\eta_0,f).
    $$
\end{theorem}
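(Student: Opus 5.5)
The statement has two parts, convergence and the Hecke eigenvalue. I would treat them separately: reduce convergence to the scalar Eisenstein series for $\PGL_2$, and obtain the eigenvalue from a coset decomposition of $K_x\Delta_xK_x$ at the unramified point $x$.

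\textbf{Convergence and holomorphy.} Since $V$ is finite dimensional, fix a $K$-invariant norm on $V$. The action of $k\in K$ is then isometric, so $\|\tilde\eta(bk)\|=|\eta(b)|\,\|f\|$. Because $\chi$ is unitary, $|\eta_{0,s}(b)|$ depends only on $\deg(b)$ and on finitely many values of $\eta_0$, since $T(F)\backslash T(\AA)_0/T(\cO)$ is finite by (\ref{eq:W_chi_dimension}). Hence $\|\tilde\eta_s(g)\|\le C\,q^{-\mathrm{Re}(s)\deg(b_g)}$, where $g=b_gk$ is an Iwasawa decomposition. The series $E_s$ is therefore dominated termwise by the scalar spherical Eisenstein series $\sum_{\gamma\in B(F)\backslash G(F)}q^{-\mathrm{Re}(s)\deg(b_{\gamma g})}$.

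Geometrically, for the bundle $\cE$ attached to $g$, this scalar series is a sum over line subbundles $\cL\subset\cE$ of $q^{-\mathrm{Re}(s)(2\deg\cL-\deg\cE)}$, with the paper's normalization of $\deg$ on $T$. The number of line subbundles of degree $n$ is at most $|\Pic^n(X)(k)|\cdot|\PP H^0(\cL^{-1}\otimes\cE)|\ll q^{\deg\cE-2n}$ for $n\ll0$. This gives a geometric series that converges for $\mathrm{Re}(s)>1$, with the shift by $1$ depending on the normalization $q^{-s}$. Each term is holomorphic in $s$, and the convergence is locally uniform in $s$ and uniform for $g$ in compacts. So $E_s$ is holomorphic for $\mathrm{Re}(s)>1$.

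\textbf{Hecke eigenvalue.} The operator $\Phi_x$ acts by right convolution, so it commutes with the left sum over $B(F)\backslash G(F)$. It therefore suffices to show $\Phi_x\tilde\eta_s=(\eta_{0,s}(\pi_x)+q^{\deg x}\eta_{0,s}(\pi_x^{-1}))\tilde\eta_s$. Since $K'_x=K_x$, the $x$-component of $K$ acts trivially on $V=\CC[K/K']$, so $\tilde\eta$ is right $K_x$-invariant. By Theorem \ref{t:hecke_gln_cases} with $H^x$ trivial level, $K_x\Delta_xK_x=\bigsqcup_{c\in k_x}\smat{\pi_x}{c}{0}{1}K_x\sqcup\smat{1}{0}{0}{\pi_x}K_x$, and all representatives lie in $B(F_x)$. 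For $g=bk$, apply this decomposition to $k_x^{-1}(K_x\Delta_xK_x)k_x=K_x\Delta_xK_x$ to move $k$ past the representatives. Then $\tilde\eta(gh_i)=k.\eta(bh_i)$, and since $\eta$ factors through $U(\AA)$, only the torus parts $\mathrm{diag}(\pi_x,1)$ ($q^{\deg x}$ times) and $\mathrm{diag}(1,\pi_x)$ (once) matter. In $\PGL_2$, with $T$ identified with $\mathbb G_m$ via $\mathrm{diag}(a,1)$, these become $\pi_x$ and $\pi_x^{-1}$. This gives the eigenvalue as a sum of $\eta_{0,s}(\pi_x)$ and $\eta_{0,s}(\pi_x^{-1})$, with multiplicities $1$ and $q^{\deg x}$.

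\textbf{Expected obstacle.} I expect the convergence estimate to be the main difficulty. It needs both the uniform bound on subbundle counts and care that the factor $q^{\deg x}$ lands on the correct term, which depends on the normalization of $\deg$ and of the measure. I would check the latter against the unramified case in \cite{KZ26}.
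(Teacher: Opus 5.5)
Your strategy is the standard one, and the paper offers no argument of its own here, only the citation to Li. For convergence you dominate $E_s$ by the scalar spherical series and count line subbundles via Riemann--Roch. For the eigenvalue you use right $K_x$-invariance of $\tilde\eta$ together with a coset decomposition of $K_x\Delta_xK_x$ inside $B(F_x)$. But as written the proposal does not prove the displayed statement: the normalization is never fixed, and the two halves use incompatible signs.

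\textbf{Eigenvalue paragraph.} Your own computation gives $q^{\deg x}$ copies of $\mathrm{diag}(\pi_x,1)$ and one copy of $\mathrm{diag}(1,\pi_x)$. Under your identification $\mathrm{diag}(a,1)\mapsto a$, this is $q^{\deg x}\eta_{0,s}(\pi_x)+\eta_{0,s}(\pi_x^{-1})$. Your final sentence swaps the multiplicities, and that swap does not follow from the computation.

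\textbf{Convergence paragraph.} The geometric translation has the wrong sign. The terms $q^{-\mathrm{Re}(s)(2\deg\cL-\deg\cE)}$ grow as $\deg\cL\to-\infty$, so the series you wrote diverges for $\mathrm{Re}(s)>1$. Use the dictionary of Section~\ref{section 2}, where the lattice at $y$ is $g_y\cO_y^2$. There an idele $a$ gives a line bundle of degree $-\deg(a)$, so $\deg(b_{\gamma g})=\deg\cE-2\deg\cL$ and the terms are $q^{\mathrm{Re}(s)(2\deg\cL-\deg\cE)}$. Against the count $\ll q^{\deg\cE-2n}$ this gives $\sum_{n\ll 0}q^{(\mathrm{Re}(s)-1)(2n-\deg\cE)}$, which converges exactly for $\mathrm{Re}(s)>1$.

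The two halves are tied together, and this is the point you left open. In the paper's setup the factor $q^{\deg x}$ always multiplies the value at $\mathrm{diag}(\pi_x,1)$. Convergence for $\mathrm{Re}(s)>1$ forces that value to have absolute value $q^{-\mathrm{Re}(s)\deg x}$. With the paper's literal conventions ($\deg\pi_x=\deg x$, $\tau=q^{-s}$), that value is $\eta_{0,s}(\pi_x)$, whichever identification of $T$ with $\mathbb G_m$ you choose. So your argument, run consistently, proves
$\Phi_xE_s=\bigl(q^{\deg x}\eta_{0,s}(\pi_x)+\eta_{0,s}(\pi_x^{-1})\bigr)E_s$.
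This is also the form compatible with the $s\leftrightarrow 1-s$ symmetry the paper invokes right after the statement. For it, the two roots of the eigenvalue equation satisfy $\mathrm{Re}(s_1)+\mathrm{Re}(s_2)=1$; for the displayed expression they satisfy $\mathrm{Re}(s_1)+\mathrm{Re}(s_2)=-1$. The displayed formula is recovered only if you do both of the following, presumably matching Li's normalization:
\begin{itemize}
\item reverse the sign of $\deg$ (equivalently, take $\tau=q^{s}$);
\item identify $T$ with $\mathbb G_m$ through the lower-right entry.
\end{itemize}
Fix one such convention at the outset and run both convergence and the eigenvalue computation in it, rather than matching the target at the end.

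Two smaller points:
\begin{itemize}
\item The eigen-identity needs $\eta_{0,s}$ to be a quasi-character of $T(F)\backslash T(\AA)$, not an arbitrary element of $W_\chi$, because you use $\eta(bh)=\eta(b)\eta(h)$. State this assumption.
\item Theorem~\ref{t:hecke_gln_cases} is the wrong reference for your decomposition. It assumes $d_x\ge 1$, and in case (i) it produces only the $q^{\deg x}$ cosets $\smat{\pi_x}{c}{0}{1}K_x$. At an unramified $x$ you need the classical $(q^{\deg x}+1)$-coset decomposition, which is what you actually wrote down.
\end{itemize}
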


Now, pick a generic $\lambda\in \CC$. We would like to understand how many $\lambda$-eigenforms are provided by Eisenstein series. Pick $\eta_0$ as above. The equation
$$
\eta_{0,s}(\pi_x)+q^{\deg x}\eta_{0,s}(\pi_x^{-1})=\lambda
$$
is quadratic in $q^{-s\deg x}$ and symmetric under $s\leftrightarrow 1-s$, which gives precisely $\deg x$ solutions with $\Re s>1$. Similarly to (\ref{eq:W_chi_dimension}), we get that the space of all $\eta_0$ can be identified with
$$
\operatorname{Fun}(T(F)\backslash T(\AA)_0/T(\cO),\CC)\simeq \operatorname{Fun}(\Pic^0(X)(\FF_q),\CC).
$$
In addition, we have $\dim V_{\chi_s}^U$ choices of $f$, where $\chi_s$ is the restriction of $\eta_{0,s}$ to $T(\cO)$. Similarly to (\ref{eq:W_chi_dimension}), this vector space is isomorphic to $V_{\chi=1}^U$, the space associated with the trivial character $1$. Therefore, we have just counted
$$
\deg x\cdot |\Pic^0(X)(\FF_q)|\cdot\dim V^U_0
$$
Eisenstein series. The following theorem completes the picture.
\begin{theorem}[\cite{Li79}, Theorem 7.1]\label{t:Li_dimension_formula}
    For all but finitely many $s$, the space of all $\lambda_s$-eigenforms of $\Phi_x$ decomposes as the direct sum of the spaces of Eisenstein series and cusp forms. Moreover, the subspace of Eisenstein series has dimension
    $$
    \deg x\cdot |\Pic^0(X)(\FF_q)|\cdot\dim V^U_0.
    $$
\end{theorem}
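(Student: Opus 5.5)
The plan is to prove the statement through the Hecke graphs of this paper rather than by reproducing the spectral analysis of \cite{Li79}. We combine three ingredients: the graph-theoretic dimension count of Theorem~\ref{t:PGL2_ramified_not_at_x}, Harder's isomorphism (Theorem~\ref{t:Harder_iso}), and a linear independence argument for the Eisenstein series $E_s(-;\eta_0,f)$ via constant terms. Since $K'_x=K_x$, we take $K'=K_\cH(D)$ with $x\notin\supp D$. Harder's isomorphism is compatible with $\Phi_x$, so $\lambda_s$-eigenforms on the vector-valued side correspond to $\lambda_s$-eigenvectors of the adjacency operator of the graph $\Gamma$ of Section~\ref{ss:unramified_PGL2}.

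\emph{Step 1 (matching the counts).} Identify
\[
\dim V_0^U=\bigl|T(\FF_q)U(\cO)\backslash K/K'\bigr|.
\]
Reducing modulo $D$, this equals
\[
\bigl|\cH(\cO_D)\backslash G(\cO_D)/T(k)\ltimes U(\cO_D)\bigr|,
\]
after inverting to pass between left and right cosets. This is exactly the fiber size appearing in Theorem~\ref{t:PGL2_ramified_not_at_x}. Hence the number $\deg x\cdot|\Pic^0(X)(\FF_q)|\cdot\dim V_0^U$ of Eisenstein series equals the graph-theoretic count $r|\Pic^0(X)(k)|\cdot|\cdots|$. By Theorem~\ref{t:PGL2_ramified_not_at_x}, for all but finitely many $\lambda$ this number is the full dimension $\dim_\lambda\Gamma$. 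The finitely many exceptions are the eigenvalues of the adjacency matrix $M$ of the finite graph $\Gamma'$. For other $\lambda$, the propagation argument shows that an eigenform is uniquely determined by its free values on the cusp strings, with $v=-(M-\lambda)^{-1}Ax_1$.

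\emph{Step 2 (no cusp forms generically).} Cusp forms for $K'$ are supported on a finite set of vertices, so they form a finite-dimensional $\Phi_x$-stable space. Hence they contribute only finitely many eigenvalues, and for generic $s$ there are no cuspidal $\lambda_s$-eigenforms. It therefore suffices to show that the Eisenstein series produced for a generic $\lambda=\lambda_s$ are linearly independent. Combined with Step~1, this shows that they span the whole eigenspace, which gives both the direct sum decomposition (with trivial cuspidal part) and the dimension formula.

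\emph{Step 3 (linear independence, the main obstacle).} Compute the constant term along $U$. For $\Re s>1$, it has the standard form
\[
E_s^{U}=\tilde\eta_s+M(s)\tilde\eta_{1-s},
\]
with an intertwining operator $M(s)$. Deep in the cusp, that is on the strings $\Gamma_i$ for $i\gg0$, the constant term coincides with the form itself. For generic $s$, the $\deg x$ solutions $s$ of the quadratic equation, together with their reflections $1-s$, give pairwise distinct growth characters in the degree variable $i$. The functions $q^{-s\,ri}$ for distinct exponents are linearly independent on the strings. So a linear relation among Eisenstein series forces relations among the leading terms $\tilde\eta_s$. These are in turn independent, because $\eta_0$ ranges over a basis of $\operatorname{Fun}(\Pic^0(X)(\FF_q),\CC)$ and $f$ over a basis of $V^U_{\chi_s}$, and the pairs $(\eta_0,f)$ are read off from values at the distinct vertices of the fibers.

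The delicate points are twofold. First, one must exclude the finitely many $s$ where $s$ and $1-s$ collide or $M(s)$ has poles. Second, one must check that the identification of $V^U_{\chi_s}$ with $V^U_{1}$ is compatible with restriction to the cusp vertices. I would try to establish this by evaluating $\tilde\eta$ directly on the representatives $\cL_1\oplus\cL_2$ with level structures in $G(\cO_D)/T(k)U(\cO_D)$.
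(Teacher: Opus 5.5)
\textbf{How your route differs from the paper.} The paper does not prove this statement. It quotes it from \cite{Li79} and only remarks afterwards that Li's count agrees with the fiber count in Theorem~\ref{t:PGL2_ramified_not_at_x}. Your plan runs this backwards: you take the graph-theoretic generic dimension as the size of the whole eigenspace and then show that the Eisenstein series fill it. That is a genuinely different route, and it would explain the coincidence the paper observes. Steps~1 and~2 are fine, since a $K'$-invariant form equals its constant term deep in the cusp, so cusp forms are finitely supported. Note, however, that the graph theorem is only available for product-type level groups $K'=K_\cH(D)$. The statement allows any open $K'\subset K$ with $K'_x=K_x$.

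\textbf{The gap is in Step~3, which carries all the weight.} For fixed $\eta_0$, the $\deg x$ solutions $s$ are by construction the $\deg x$ values of $q^{-s}$ with one prescribed value of $q^{-s\deg x}$. Hence $q^{-sri}$ is literally the same function of $i$ for all of them. Growth along the strings (that is, under $\otimes\cO(x)$) separates only the root $\alpha=\eta(\pi_x)$ from the other root $\beta=q^{\deg x}/\alpha$, i.e.\ leading terms from secondary terms. It never separates Eisenstein series with the same $(\eta_0,f)$ and different $s$, and reading $(\eta_0,f)$ off the fibers does not address this.

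The identification $V^U_{\chi_s}\simeq V^U_1$ that you want to use also does not exist in general. Take $D=[y]$ with $\deg y=2$ and $H^y=B$. Then $U(k_y)\backslash G(k_y)/B(k_y)$ consists of two $T(k_y)$-fixed points. So $V^U=V^U_1$ is $2$-dimensional, while $V^U_\chi=0$ for every nontrivial $\chi$ trivial on $T(\FF_q)$.

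The correct bookkeeping is as follows. Fix a character $\chi$ of $T(\cO)/T(\FF_q)$. The Hecke characters $\eta$ of $T(F)\backslash T(\AA)$ with $\eta|_{T(\cO)}=\chi$ and $\eta(\pi_x)=\alpha$ form a torsor under the dual of $\Pic(X)/\langle [x]\rangle$. Since the degree map on $\Pic(X)$ is onto, there are $\deg x\cdot|\Pic^0(X)(\FF_q)|$ of them. For each such $\eta$, let $f$ run over a basis of $V^U_\chi$. Summing $\dim V^U_\chi$ over $\chi$ then gives $\dim V^U_0$.

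Independence then comes from the left action of $T(\AA)$ on $T(F)U(\AA)\backslash G(\AA)/K'$, which models the deep cusp.
\begin{itemize}
\item The leading term of $E(-;\eta,f)$ is an $\eta$-eigenfunction, and for fixed $\eta$ it is injective in $f$.
\item The secondary term is an eigenfunction for a character whose value at $\pi_x$ is $\beta\neq\alpha$.
\item The deep cusp is stable under the monoid of ideles of degree of the appropriate sign, and this monoid generates $T(F)\backslash T(\AA)$.
\end{itemize}
Dedekind's independence argument on this monoid then separates all characters simultaneously.

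\textbf{Continuation is also missing.} Here $E_s$ is only defined for $\Re s>1$. Whenever $0\le\Re s\le 1$, neither member of the pair $s,1-s$ lies in that range, and this happens for a set of $\lambda$ with nonempty interior, not a finite one. So ``all but finitely many $s$'' needs the continuation of $E_s$, of its constant-term formula, and of the eigen-equation. You invoke poles of $M(s)$ but never establish any of this. You can either restrict to $\Re s>1$, or get rationality in $q^{-s}$ from your own set-up. Deep in the cusp $E_s$ equals its constant term, which is rational in $q^{-s}$. On the finite graph $\Gamma'$, for $\lambda\notin\operatorname{spec}(M)$, the eigen-equation expresses the values rationally in terms of the cusp values.
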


Note that
$$
\dim V^U_0=|T(\FF_q)U(\cO)\backslash K/K_{\cH}(D)|=|T(\FF_q)U(\cO_D)\backslash G(\cO_D)/\cH(\cO_D)|.
$$
Thus, we observe the match between the dimension formulas given in Theorems \ref{t:PGL2_ramified_not_at_x} and \ref{t:Li_dimension_formula}. On the other hand, approach in \cite{Li79} does not tackle the case considered when the Hecke operator is taken in a ramified point. Theorems \ref{t:H^x=U_dimensions} and \ref{t:PGL2_ramified_at_x} provide dimension formulas in this case, which are clearly different from the dimensions given by Eisenstein series. Therefore, the Eisenstein series construction does not immediately give eigenforms for $\Phi_x$ when $x$ is ramified. In future work, we will give a way to construct eigenforms for $\Phi_x$ from Eisenstein series in this case. Such construction will turn out to produce eigenforms for the global Hecke algebra.

\bibliographystyle{alphaurl}
\bibliography{references}

@article{AB09,
  author  = {Arkhipov, Sergey and Bezrukavnikov, Roman},
  title   = {Perverse sheaves on affine flags and {Langlands} dual group},
  journal = {Israel Journal of Mathematics},
  volume  = {170},
  pages   = {135--183},
  year    = {2009},
  doi     = {10.1007/s11856-009-0024-y},
  note    = {With an appendix by Bezrukavnikov and Ivan Mirkovi{\'c}}
}

@article{Alv19,
  author  = {Alvarenga, Roberto},
  title   = {{O}n graphs of {H}ecke operators},
  journal = {Journal of Number Theory},
  year    = {2019},
  volume  = {199},
  pages   = {192--228}
}

@article{AB24,
  author  = {Alvarenga, Roberto and Bonnel, Nans},
  title   = {{H}ecke eigenspaces for the projective line},
  journal = {Journal of Number Theory},
  year    = {2024},
  volume  = {264},
  pages   = {59--98}
}

@article{Bez16,
  author  = {Bezrukavnikov, Roman},
  title   = {On two geometric realizations of an affine {Hecke} algebra},
  journal = {Publications mathématiques de l'IHÉS},
  volume  = {123},
  pages   = {1--67},
  year    = {2016},
  doi     = {10.1007/s10240-015-0077-x}
}

@article{Bos22,
  author  = {uit de Bos, Niels},
  title   = {An Explicit Geometric Langlands Correspondence for the Projective Line Minus Four Points},
  journal = {International Mathematics Research Notices},
  volume  = {2022},
  number  = {24},
  year    = {2022},
  pages   = {19690--19746},
  doi     = {10.1093/imrn/rnab218}
}

@incollection{BZN18,
  author    = {Ben-Zvi, David and Nadler, David},
  title     = {Betti geometric {Langlands}},
  booktitle = {Algebraic Geometry: Salt Lake City 2015},
  series    = {Proceedings of Symposia in Pure Mathematics},
  volume    = {97.2},
  pages     = {3--41},
  publisher = {American Mathematical Society},
  address   = {Providence, RI},
  year      = {2018},
  doi       = {10.1090/pspum/097.2/01},
  eprint    = {1606.08523},
  archivePrefix = {arXiv},
  primaryClass  = {math.AG}
}

@article{Dri87,
  author  = {Drinfeld, V. G.},
  title   = {Two-dimensional {$\ell$}-adic representations of the Galois group of a global field of characteristic {$p$} and automorphic forms on {GL}(2)},
  journal = {Journal of Soviet Mathematics},
  volume  = {36},
  number  = {1},
  year    = {1987},
  pages   = {93--105},
  doi     = {10.1007/BF01104975},
  note    = {English translation of the 1984 Russian original}
}

@article{Hei04,
  author  = {Heinloth, Jochen},
  title   = {Coherent sheaves with parabolic structure and construction of {H}ecke eigensheaves for some ramified local systems},
  journal = {Annales de l'Institut Fourier},
  volume  = {54},
  number  = {7},
  year    = {2004},
  pages   = {2235--2325},
  doi     = {10.5802/aif.2080}
}

@article{HNY13,
  author  = {Heinloth, Jochen and Ng{\^o}, Bao Ch{\^a}u and Yun, Zhiwei},
  title   = {Kloosterman sheaves for reductive groups},
  journal = {Annals of Mathematics},
  series  = {2},
  volume  = {177},
  number  = {1},
  year    = {2013},
  pages   = {241--310},
  doi     = {10.4007/annals.2013.177.1.5}
}

@article{Li79,
  title = {Eisenstein series and decomposition theory over function fields},
  volume = {240},
  ISSN = {1432-1807},
  url = {http://dx.doi.org/10.1007/BF01364628},
  DOI = {10.1007/bf01364628},
  number = {2},
  journal = {Mathematische Annalen},
  publisher = {Springer Science and Business Media LLC},
  author = {Li,  Wen-Ch’ing Winnie},
  year = {1979},
  month = June,
  pages = {115–139}
}

@article{Lor13,
  author  = {Lorscheid, Oliver},
  title   = {{G}raphs of {H}ecke operators},
  journal = {Algebra \& Number Theory},
  year    = {2013},
  volume  = {7},
  number  = {1},
  pages   = {19--61}
}

@article{KZ26,
      title={Eigenforms and graphs of {H}ecke operators with wild ramification}, 
      author={Rudrendra Kashyap and Vladyslav Zveryk},
      year={2026},
      eprint={2603.15931},
      archivePrefix={arXiv},
      primaryClass={math.AG}
}

@article{Schieder2015, 
author = {Schieder, Simon}, 
title = {The {H}arder--{N}arasimhan stratification of the moduli stack of {G}-bundles via {D}rinfeld's compactifications}, 
journal = {Selecta Mathematica}, 
volume = {21}, 
number = {3}, 
pages = {763--831}, 
year = {2015} 
}

@misc{gaitsgory2016recentprogressgeometriclanglands,
      title={Recent progress in geometric Langlands theory}, 
      author={Dennis Gaitsgory},
      year={2016},
      eprint={1606.09462},
      archivePrefix={arXiv},
      primaryClass={math.AG},
      url={https://arxiv.org/abs/1606.09462}, 
}

@article{Yun16,
  author  = {Yun, Zhiwei},
  title   = {Epipelagic representations and rigid local systems},
  journal = {Selecta Mathematica},
  series  = {N.S.},
  volume  = {22},
  number  = {3},
  year    = {2016},
  pages   = {1157--1193},
  doi     = {10.1007/s00029-015-0204-z}
}

@incollection{Yun14b,
  author    = {Yun, Zhiwei},
  title     = {Rigidity in automorphic representations and local systems},
  booktitle = {Current Developments in Mathematics 2013},
  pages     = {73--168},
  publisher = {International Press},
  address   = {Somerville, MA},
  year      = {2014},
  doi       = {10.4310/cdm.2013.v2013.n1.a2},
  eprint    = {1405.3035},
  archivePrefix = {arXiv},
  primaryClass  = {math.AG}
}
\end{document}